\documentclass[11pt]{amsart}

\usepackage{geometry}
\usepackage[all]{xy}
\usepackage{epstopdf}
\usepackage{amssymb,amsmath,amsfonts,amsthm,graphicx,enumerate,amscd}
\usepackage{mathrsfs}
\usepackage{ytableau}
\usepackage{lscape}
\usepackage{hyperref}
\usepackage{rotating}

\newcommand{\deff}[1]{\textbf{\emph{\sharp1}}}

\newcommand{\func}[3]{\sharp1 \colon \sharp2 \to \sharp3}
\newcommand{\bb}[1]{\mathbb{\sharp1}}
\newcommand{\lie}[1]{\mathfrak{\sharp1}}
\newcommand{\iprod}[2]{\langle \sharp1, \sharp2 \rangle}
\newcommand{\ddell}[1]{\frac{\partial}{\partial \sharp1}}

\theoremstyle{plain}
\newtheorem{theorem}{Theorem}[section]

\newtheorem{claim}[theorem]{Claim}
\newtheorem{corollary}[theorem]{Corollary}
\newtheorem{lemma}[theorem]{Lemma}
\newtheorem{proposition}[theorem]{Proposition}

\theoremstyle{definition}
\newtheorem{example}[theorem]{Example}
\newtheorem{remark}[theorem]{Remark}

\newtheorem{definition}[theorem]{Definition}

\allowdisplaybreaks

\title{Equivariant formality and representation theory}
\author{Chi-Kwong Fok}

\begin{document}
\begin{abstract}
	Let $G$ be a compact connected Lie group and $K$ its connected Lie subgroup. Using the $K$-theoretic version of equivariant formality developed in \cite{F2} which involves analysis of vector bundles over $G/K$, we find two characterizations of equivariant formality of the isotropy action of $K$ on $G/K$. The first one equates equivariant formality with a "smoothness" condition of the restriction map of the representation ring of $G$ to that of $K$. The second characterization asserts that equivariant formality amounts to the equivariant $K$-theoretic index pairing of two certain special $K$-theory classes being nontrivial in some sense. By applying these characterizations, we are able to give a representation theoretic criterion for equivariant formality of the isotropy action by a circle subgroup, as well as an invariant theory criterion for the case where $K$ is a torus of dimension one less than the rank of $G$, culminating in a complete classification of equivariant formality where $G$ is further assumed to be simple.   
\end{abstract}
\maketitle
\tableofcontents
\section{Introduction}
Equivariant formality, first identified and discussed at length in \cite{BBFMP} and named in \cite{GKM}, is a desirable property of group actions on topological spaces. We say a $G$-action on a space $X$ is equivariantly formal if the Leray-Serre spectral sequence for the rational cohomology of the fiber bundle $X\hookrightarrow X\times_G EG\to BG$ associated with the Borel homotopy quotient collapses on the $E_2$ page. Interesting examples of equivariant formality abound, e.g. Hamiltonian group actions on compact symplectic manifolds and linear algebraic torus actions on smooth complex projective varieties. 

Recently, the study of equivariant formality of compact homogeneous spaces has garnered some interest. Following the convention in \cite{CFok}, we say the pair of compact Lie groups $(G, K)$ is \emph{isotropy formal} if the left action of $K$ on $G/K$ is equivariantly formal, and call such an action the \emph{isotropy action}. When $K$ is the identity subgroup, equivariant formality of isotropy action is immediate. On the other hand, the case when $K$ is of the maximal rank is well-known to be equivariantly formal. More subtle is when $K$ sits between the two extreme cases. The problem of determining isotropy formality in the intermediate cases was taken up in a series of papers. In \cite{ST, Sh}, a partial characterization of isotropy formality was given in terms of formality in the sense of rational homotopy theory and invariant theory of the equivariant cohomology coefficient rings. When $G/K$ is a generalized symmetric space, the isotropy formality of $(G, K)$ was proved in \cite{Go, GN} by comparing cohomological dimensions of the generalized symmetric space and its fixed point set on a case-by-case basis. When $K$ is a circle subgroup, \cite{Ca} provided a complete characterization of isotropy formality. In \cite{CFok}, by exploiting the notion of rational $K$-theoretic equivariant formality (RKEF) developed in \cite{F2}, the authors characterized isotropy formality in terms of the representation rings of some finite covers of $\widetilde{G}$ and $\widetilde{K}$ (such that $\pi_1(\widetilde{G})$ is torsion-free) of $G$ and $K$, and noted that the characterization in fact amounts to `smoothness' of the embedding of $\widetilde{K}$ into $\widetilde{G}$ in an algebro-geometric sense, if one takes the spec of the representation rings. In fact, considering the representation rings of some finite covers rather than the groups themselves in the characterization is for the sake of convenience as the representation ring of $\widetilde{G}$ is a (Laurent) polynomial ring and has a simpler algebraic structure in general than $R(G)$. As the property of isotropy formality is preserved under taking finite cover (\cite[Theorem 1.2]{Ca}), we expect that there should be a similar characterization where taking finite covers of the Lie groups is not necessary. In this note we will first prove this strengthened characterization where we remove the condition that $\pi_1(G)$ is torsion-free. 
We will use $R(G; \mathbb{Q})$, $I(G; \mathbb{Q})$ and $K^*_G(X; \mathbb{Q})$ to denote the rationalized representation ring, augmentation ideal and equivariant $K$-theory respectively. 
\begin{theorem}\label{rep}
	Let $G$ be a compact connected Lie group and $K$ its connected Lie subgroup. Then $(G, K)$ is isotropy formal if and only if the image $R:=i^*R(G; \mathbb{Q})$ of the restriction map $i^*: R(G;\mathbb{Q})\to R(K;\mathbb{Q})$ is regular at the restriction  $I:=i^*I(G;\mathbb{Q})$ of the augmentation ideal of $R(G;\mathbb{Q})$.
\end{theorem}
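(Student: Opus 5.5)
We reduce to the torsion-free case already treated in \cite{CFok} and then show that the regularity condition is insensitive to passing to finite covers. Choose a finite cover $p\colon \widetilde{G}\to G$ with $\pi_1(\widetilde{G})$ torsion-free, for instance $\widetilde{G}=\widetilde{G}_{ss}\times T$ with $\widetilde{G}_{ss}$ simply connected and $T$ a torus. Let $\widetilde{K}$ be the identity component of $p^{-1}(K)$. Passing to a further finite cover of $\widetilde{K}$ if necessary, as in \cite{CFok}, we may arrange that $\pi_1(\widetilde{K})$ is torsion-free too. By \cite[Theorem 1.2]{Ca}, $(G,K)$ is isotropy formal if and only if $(\widetilde{G},\widetilde{K})$ is. By the main theorem of \cite{CFok}, the latter holds if and only if $\widetilde{R}:=\tilde i^*R(\widetilde{G};\mathbb{Q})$ is regular at $\widetilde{I}:=\tilde i^*I(\widetilde{G};\mathbb{Q})$. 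It therefore suffices to show that $R$ is regular at $I$ if and only if $\widetilde{R}$ is regular at $\widetilde{I}$.

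The key step compares completions. Both $I$ and $\widetilde I$ are maximal ideals, being kernels of the augmentation (evaluation at the identity) restricted to the images, and regularity at a maximal ideal is detected by the completion. By the Atiyah--Segal completion theorem in its rational form, $R(G;\mathbb{Q})^\wedge_{I(G)}\cong H^{**}(BG;\mathbb{Q})$, the product of cohomology groups. Under this identification the restriction map $i^*$ becomes the map $H^{**}(BG;\mathbb{Q})\to H^{**}(BK;\mathbb{Q})$. The same holds for the covers, and since finite covers induce rational homotopy equivalences $B\widetilde{G}\simeq_{\mathbb{Q}} BG$ and $B\widetilde K\simeq_{\mathbb{Q}}BK$, the two completed restriction maps are isomorphic.

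The delicate point is that $\widehat{R}_I$ is not literally the image of the completed map, because completion need not commute with taking images. Here is how we handle it. Since $R(K;\mathbb{Q})$ is finite over $R(G;\mathbb{Q})$ and Noetherian, $R$ is a finitely generated $\mathbb{Q}$-algebra, and the inclusion $R\hookrightarrow R(K;\mathbb{Q})$ is finite. Rationally, $I(K)$-adic and $I$-adic topologies on $R(K;\mathbb{Q})$ agree up to the other maximal ideals lying over $I$; we localize at the augmentation ideal of $R(K;\mathbb{Q})$ to isolate the relevant component, and this is where connectedness of $K$ enters. With that in place, exactness of completion on finite modules over Noetherian rings gives
\[
\widehat{R}_I\cong \operatorname{im}\bigl(H^{**}(BG;\mathbb{Q})\to H^{**}(BK;\mathbb{Q})\bigr).
\]
The identical argument gives the same description of $\widehat{\widetilde R}_{\widetilde I}$. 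Hence $\widehat{R}_I\cong\widehat{\widetilde R}_{\widetilde I}$, and regularity transfers in both directions.

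I expect the main obstacle to be exactly this identification of the completion of the image with the image of the completed map. It needs care with components: $i^*$ can send other maximal ideals of $R(K;\mathbb{Q})$ to ones lying over $I$, for instance points of $\operatorname{Spec}R(K)$ corresponding to central torsion elements. My first approach would be to work with the localization of $R(K;\mathbb{Q})$ at its augmentation ideal before completing. Failing that, I would fall back on a direct argument using the RKEF criterion of \cite{F2}, $K_K^*(G/K;\mathbb{Q})\cong R(K;\mathbb{Q})\otimes_{R(G;\mathbb{Q})}\mathbb{Q}$ together with a flatness statement, which should translate regularity into a statement checkable after completion.
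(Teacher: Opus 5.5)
Your route is correct but genuinely different from the paper's. You treat the covering version of the criterion from \cite{CFok} and Carlson's cover-invariance \cite[Theorem 1.2]{Ca} as black boxes, and you supply only the missing link: regularity of $R$ at $I$ depends only on the formal germ at the identity. By exactness of $I(G)$-adic completion on finite $R(G;\mathbb{Q})$-modules, $\widehat{R}_I$ is the image of $\widehat{R(G;\mathbb{Q})}\to\widehat{R(K;\mathbb{Q})}$. The completed Chern character identifies this with the image of $\mathbb{Q}[[\mathfrak{t}]]^{W_G}\to\mathbb{Q}[[\mathfrak{t}_K]]^{W_K}$, which only sees Lie algebras. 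Here ``rational Atiyah--Segal'' has to mean the isomorphism of \cite[Theorem 5.3]{CFok} used in the proof of Theorem \ref{invtpoly}, since $\widehat{R(G)}\otimes\mathbb{Q}$ is not the completion of $R(G;\mathbb{Q})$.

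The obstacle you flag does not actually arise. For an honest subgroup $K\subseteq G$, a semisimple element of $K_{\mathbb{C}}$ on which every character of $G$ equals its degree is $G_{\mathbb{C}}$-conjugate to $1$, hence equals $1$. So $I(K;\mathbb{Q})$ is the only prime of $R(K;\mathbb{Q})$ over $I$, and the $I(G)$- and $I(K)$-adic topologies on $R(K;\mathbb{Q})$ coincide. Central torsion points only show up if one compares $R$ and $\widetilde R$ directly, which your completion argument bypasses. Connectedness of $K$ is used in the Chern character step, not here. Similarly, a further cover $\widetilde K'\to\widetilde K$ does not change the image ring, since pullback along a surjection is injective on representation rings.

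The paper instead argues directly for arbitrary $\pi_1(G)$. Proposition \ref{rankSam}, built from Lemma \ref{eqSam}, the RKEF criterion and the structure of $K^*(G/K;\mathbb{Q})$, converts isotropy formality into $\dim\delta^{G/K}(\ker i^*)=\operatorname{rank}G-\operatorname{rank}K$. Tensoring the conormal sequence with $R/I$ then converts that into the Jacobian-type criterion $\dim_{\mathbb{Q}}\Omega^1_{R/\mathbb{Q}}\otimes_R R/I=\operatorname{rank}K$.

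Your proof is shorter given the literature and makes cover-invariance of the algebraic condition transparent. It does, however, rest on Carlson's cohomological theorem and on the torsion-free result of \cite{CFok}. The paper's proof needs neither, and it yields Corollary \ref{pequaldel} and Proposition \ref{rankSam} for later use in Theorem \ref{indexthry}. Combined with your completion observation, it also recovers Carlson's finite-cover invariance by purely $K$-theoretic means.
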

As remarked earlier and in \cite[Introduction]{CFok}, the isotropy formality of $(G, K)$, through the regularity condition in the above characterization, can be recast as the smoothness at the point $I(G; \mathbb{Q})$ of the map $\text{Spec}R(K; \mathbb{Q})\to\text{Spec}R(G; \mathbb{Q})$, which is the affinization of the embedding $K\hookrightarrow G$. As a corollary, the rational representation ring $R(G;\mathbb{Q})$ is always regular at the augmentation ideal, since $(G, G)$ is obviously isotropy formal. In particular, taking the quotient of a compact connected Lie group $G$ by its finite central subgroup $Z$ does not create a `crease' at $I(G/Z; \mathbb{Q})$ in $\text{Spec}R(G/Z; \mathbb{Q})$. It was claimed after \cite[Proof of Lemma 4.2]{Ho} that $R(\text{PSU}(3); \mathbb{Q})$ is not regular at its augmentation ideal, citing the existence of torsion in $\pi_1(\text{PSU}(3))$. We will show by explicit computation that $R(\text{PSU}(3); \mathbb{Q})$ is in fact regular at its augmentation ideal, which is consistent with Theorem \ref{rep}. This will serve as a correction to the claim after \cite[Lemma 4.2]{Ho}.

To demonstrate the utility of Theorem \ref{rep}, we apply it to obtain characterizations of isotropy formality in two contexts.  The first one is when $K$ is a circle subgroup of $G$, which is assumed to be semisimple. We characterize isotropy formality for this case in Theorem \ref{circle}. Interestingly, Theorem \ref{rep} reduces the proof of Theorem \ref{circle} to elementary calculus of curves when checking the smoothness of the map $\text{Spec}R(K; \mathbb{Q})\to\text{Spec}R(G; \mathbb{Q})$. In fact this result recovers \cite[Theorem 1.5]{Ca} (in the case where $S$ is contained the commutator subgroup $G'$, i.e., the semisimple part of $G$), where the characterization is in terms of the ``reflectibility'' of $S$ and whose proof involves cohomological arguments.
\begin{theorem}\label{circle}
	Let $S$ be a circular subgroup of a compact connected semisimple Lie group $G$. Then $(G, S)$ is isotropy formal if and only if any representation of $G$, when restricted to $S$, is self-dual. 
\end{theorem}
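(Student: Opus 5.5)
We use Theorem \ref{rep} and reduce everything to the geometry of the affine curve $\text{Spec}\,R$ near the point $I$, where $R=i^*R(G;\mathbb{Q})\subseteq R(S;\mathbb{Q})=\mathbb{Q}[t,t^{-1}]$. Two standard facts come first. $R(S;\mathbb{Q})$ is a finite module over $R(G;\mathbb{Q})$, hence integral over $R$. Also $R\neq\mathbb{Q}$, since a faithful representation of $G$ restricts nontrivially to $S$. So $R$ is a one-dimensional domain and $I$ is the maximal ideal lying under $(t-1)$. Write $t=e^x$ and expand elements of $R$ as power series in $x$ at $x=0$. Because $G$ is semisimple, $\mathfrak g=[\mathfrak g,\mathfrak g]$, so every representation of $\mathfrak g$ is traceless. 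Hence for every character $\chi$ of $G$, $\frac{d}{dx}\chi(e^x)|_{x=0}=\sum_i n_i=0$, where the $n_i$ are the $S$-weights. Every element of $I$ therefore vanishes to order $\ge 2$ in $x$. For a faithful representation, $\frac{d^2}{dx^2}\chi(e^x)|_{x=0}=\sum_i n_i^2>0$, so $I$ contains an element $f$ of order exactly $2$.

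\emph{Self-dual $\Rightarrow$ formal.} If every restricted character is invariant under $t\mapsto t^{-1}$, then $R\subseteq \mathbb{Q}[u]$ with $u=t+t^{-1}-2$. Here $u$ has order exactly $2$ in $x$, and $\mathbb{Q}[u]$ is integral over $R$. The element $f$ above equals $cu+O(u^2)$ with $c\neq0$. The branch of $\text{Spec}\,\mathbb{Q}[u]$ over $I$ is a single unramified point, because $\mathbb{Q}[t^{\pm1}]$ has only the point $t=1$ over $u=0$. Hence the completion $\widehat{R}_I$ contains an element of $u$-order $1$, which forces $\widehat{R}_I=\mathbb{Q}[[u]]$. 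So $R$ is regular at $I$, and Theorem \ref{rep} gives isotropy formality.

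\emph{Formal $\Rightarrow$ self-dual.} Suppose $R$ is regular at $I$, so $\widehat R_I=\mathbb{Q}[[\pi]]$ is a DVR with uniformizer $\pi$, and $\widehat R_I\subseteq\mathbb{Q}[[x]]$ with $\pi=ax^e+\cdots$. Every element of $I$ has order $\ge2$, so $e\ge2$. The element $f$ is a power series in $\pi$ and has order exactly $2$, so $e\mid 2$, and hence $e=2$. After an analytic change of coordinates $\pi=y^2$, every element of $R$ is an even function of $y$. Pulling this back gives a nontrivial local analytic involution $\sigma$ of a neighbourhood of $x=0$ with $\sigma'(0)=-1$ that fixes every restricted character.

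The main obstacle is to globalize $\sigma$. The plan is to use that $R$ contains finitely many generators $\chi_1,\dots,\chi_m$ (restricted characters), and that the map $t\mapsto(\chi_1(t),\dots,\chi_m(t))$ is finite. Then $\sigma$, viewed as an identity $\chi_j(e^{\sigma(x)})=\chi_j(e^x)$, defines a germ of an analytic correspondence on $\mathbb{C}^\times$ contained in the algebraic curve $\{(t,t'):\chi_j(t)=\chi_j(t')\ \forall j\}$. The component of this curve through $(1,1)$ tangent to the antidiagonal is then an algebraic curve in $\mathbb{C}^\times\times\mathbb{C}^\times$ that is the graph of an analytic function near $(1,1)$. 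Using the explicit Laurent form of the $\chi_j$ (sums of monomials $t^{n}$), one would argue that this component is the graph of an automorphism of $\mathbb{G}_m$ fixing $1$. Such automorphisms are only $t\mapsto t^{\pm1}$, and $\sigma'(0)=-1$ forces $t\mapsto t^{-1}$. Consequently $\chi(t)=\chi(t^{-1})$ for all restricted characters, which is the self-duality claim.

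If this step resists, the fallback is to compare leading and trailing weights. A character $\sum t^{n_i}$ fixed by a correspondence exchanging the ends of $\mathbb{G}_m$ must have its highest weight equal to minus its lowest weight, and the argument would then be run inductively on the multiset of weights.
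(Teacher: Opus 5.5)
Your first direction follows the paper: that $f\in I$ has $x$-order exactly $2$ and equals $cu+O(u^2)$ with $c\neq 0$ is the same computation as the paper's $p_k'(0)=\sum_{n>0}n^2a_{k,n}\neq 0$. The converse has a genuine gap exactly where you place the ``main obstacle''.

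\textbf{Where the converse breaks.} The local part is sound. Choose the uniformizer $\pi$ inside $I$, so that $y$ and $\sigma$ are genuinely analytic rather than formal; you only need the continuous map $\widehat{R}_I\to\mathbb{Q}[[x]]$, not its injectivity. What is missing is the globalization. The claim that the component of $\{\chi_j(t)=\chi_j(t')\ \forall j\}$ through $(1,1)$ is the graph of $t\mapsto t^{-1}$ is only announced (``one would argue''). It cannot follow from the Laurent shape of the characters taken one at a time. Take $\chi=2t+t^{-2}$, the standard representation of $SU(3)$ restricted to $\mathrm{diag}(t,t,t^{-2})$. Then $\chi(t)-3=t^{-2}(t-1)^2(2t+1)$, so there is a local involution $\tau$ with $\tau'(1)=-1$ and $\chi\circ\tau=\chi$. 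Yet the off-diagonal part of $\{\chi(t)=\chi(t')\}$ is $\{2t^2t'^2=t+t'\}$, on which $t'$ is two-valued in $t$. So some global property of the whole ring $R$ must enter, and your proposal never says which. The fallback via extreme weights and ``induction on the multiset of weights'' is not an argument either.

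\textbf{How to close it.} The cheapest repair uses closure of $R$ under duality plus the injectivity lemma from the paper's proof. Suppose some restricted character $q$ is not self-dual. Then $q$ and $q(t^{-1})$ (the restriction of the dual representation) both lie in $R$. The map $\Psi(t)=(q(t),q(t^{-1}))$ is injective on a neighbourhood of $1$ in $(0,\infty)$. Indeed, $D(t)=q(t)-q(t^{-1})$ is a nonzero Laurent polynomial with $D(t^{-1})=-D(t)$, so it has one strict sign on $(1,1+\delta)$ and the opposite sign on $((1+\delta)^{-1},1)$. Also $q$ is strictly monotone on $(1,1+\delta)$ for small $\delta$. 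Since $\pi$ has rational coefficients, your $\sigma$ gives a real-analytic involution $\tau(t)=e^{\sigma(\log t)}$ near $1$ with $\tau(t)<1<t$ for $t>1$ and $\Psi\circ\tau=\Psi$. This is a contradiction. The paper does without the involution: from the same injectivity and AM--GM (each coordinate of $t\mapsto(q_1(t),\dots,q_m(t))$ has a strict minimum at $t=1$) it shows the image curve has a cusp.

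\textbf{A second, smaller gap in the first direction.} You need $u=0$ to be the only point of $\text{Spec}\,\mathbb{Q}[u]$ over $I$. Otherwise $\text{Spec}\,R$ could be nodal at $I$ even though your branch is smooth. For example, $w=u^2+u$ and $z=u^3+u^2$ meet all your other hypotheses but satisfy the nodal relation $z^2+zw-w^3=0$. Your stated reason, that only $t=1$ lies over $u=0$, concerns the wrong map. The right reason is as follows. Suppose $\chi(t_0)=\chi(1)$ for all $\chi\in R$, and apply this to the exterior powers $\Lambda^k\rho$ of a faithful $\rho$. Then $\rho(t_0)$ has characteristic polynomial $(\lambda-1)^{\dim\rho}$, so $t_0^n=1$ for every $S$-weight $n$ of $\rho$. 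These weights have gcd $1$, hence $t_0=1$. The paper's tangent-vector argument also leaves this point implicit.

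\textbf{Your algebraic route can also be finished.} With the fact just proved, consider the normalization of $\text{Spec}(R\otimes\mathbb{C})$, which equals $R_I$ locally because a regular local ring is normal. It is a smooth rational affine curve receiving a finite map from $\mathbb{G}_m$. That map is ramified of index $2$ at $t=1$, and $t=1$ is the only point over $I$. Finite self-maps of $\mathbb{G}_m$ are unramified, so the normalization is $\text{Spec}\,\mathbb{C}[v]$, with $v$ a Laurent polynomial having poles at both $0$ and $\infty$. Its only zero in $\mathbb{C}^\times$ is the double zero at $t=1$, which forces $v-v(1)=ct^{-1}(t-1)^2$. Hence $R\subseteq\mathbb{C}[t+t^{-1}]$, which is the self-duality claim.
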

We also apply this Theorem \ref{rep} to the case where $K$ is a torus subgroup of dimension being 1 less than the rank of $G$. Via the Chern character, we translate the regularity criterion for representation rings to the regularity criterion below for a distinguished polynomial on the Lie algebra of $K$ invariant under the Weyl group, which is a computable algorithm for determining whether an any given corank 1 pair is isotropy formal.

\begin{theorem}[=Theorem \ref{invtpoly}]
	Let $G$ be a compact connected Lie group of rank $n$, $S$ a torus subgroup of dimension being $n-1$, $T$ a maximal torus of $G$ containing $S$, $\mu\in \chi(T)$ a primitive character of $T$ such that $S=\text{ker }\mu$, $L$ a primitive weight in the weight lattice $\mathfrak{t}^*_\mathbb{Z}$ corresponding to $\mu$ after exponentiation, and $W$ the Weyl group of $G$. Define
	\[\Phi_L:=\left(\prod_{w\in W/W_{[L]}} wL\right)^{n(L)}\in \text{Sym}^*(\mathfrak{t}^*)^W,\]
	where $W_{[L]}$ is the stabilizer of $[L]:=\{\pm L\}\in \mathfrak{t}^*/\{\pm 1\}$, $n(L)=1$ or 2 depending on whether $\displaystyle\prod_{w\in W/W_{[L]}}wL$ is $W$-invariant or $W$-anti-invariant, i.e. 
	\[\displaystyle w'(\prod_{w\in W/W_{[L]}}wL)=\text{sgn}(w')\prod_{w\in W/W_{[L]}}wL\text{ for all }w'\in W.\] 
	Then $(G, S)$ is isotropy formal if and only if $\Phi_L\notin (f_1, \cdots, f_n)^2$, where $f_1, \cdots, f_n$ are the fundamental $W$-invariant homogeneous polynomials on $\mathfrak{t}$, i.e. $\text{Sym}^*(\mathfrak{t}^*)^W=\mathbb{R}[f_1, \cdots, f_n]$.
\end{theorem}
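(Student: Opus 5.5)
The plan is to reduce the corank-one statement to Theorem \ref{rep} and then pass from representation rings to invariant polynomials. By Theorem \ref{rep}, $(G,S)$ is isotropy formal if and only if $R=i^*R(G;\mathbb{Q})\subset R(S;\mathbb{Q})$ is regular at $I=i^*I(G;\mathbb{Q})$. First I would complete at the augmentation ideal and use the Chern character, so that the question takes place in formal power series. Completing $R(G;\mathbb{Q})$ at $I(G;\mathbb{Q})$ gives $\widehat{\text{Sym}}(\mathfrak{t}^*)^W=\mathbb{R}[[f_1,\dots,f_n]]$ after extending scalars to $\mathbb{R}$, which is harmless for regularity. Likewise, completing $R(S;\mathbb{Q})$ gives $\widehat{\text{Sym}}(\mathfrak{s}^*)$. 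Regularity is detected by the completion, so the condition becomes: the image $A$ of $\mathbb{R}[[f_1,\dots,f_n]]\to\mathbb{R}[[\mathfrak{s}]]$, under restriction of functions from $\mathfrak{t}$ to $\mathfrak{s}=\ker L$, is a regular local ring. This requires checking that completing the image agrees with the image of the completion. Since $R(S;\mathbb{Q})$ is finite over $i^*R(G;\mathbb{Q})$ (as $R(T)$ is finite over $R(G)$), Artin--Rees and the exactness of completion should give this.

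Next I would identify the kernel of this map. The subspace $\mathfrak{s}$ is a hyperplane in $\mathfrak{t}$, and $f\in\text{Sym}(\mathfrak{t}^*)^W$ vanishes on $\mathfrak{s}$ iff $L\mid f$. By $W$-invariance this happens iff every $wL$ divides $f$, which means $\prod_{w\in W/W_{[L]}}wL$ divides $f$ (these factors are pairwise non-proportional). The quotient of an invariant by an anti-invariant is anti-invariant, hence again divisible by that product. So the kernel of $\mathbb{R}[f_1,\dots,f_n]\to\text{Sym}(\mathfrak{s}^*)$ is the principal ideal generated by the smallest invariant multiple of the product, namely $\Phi_L$. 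The same holds after completion by flatness. Hence $A\cong\mathbb{R}[[f_1,\dots,f_n]]/(\Phi_L)$.

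Finally, the quotient $A$ of the regular local ring $\mathbb{R}[[f_1,\dots,f_n]]$ by a principal ideal has dimension $n-1$: the kernel is nonzero and the map to $\mathbb{R}[[\mathfrak{s}]]$ is finite, so $A$ has the same dimension as $\mathbb{R}[[\mathfrak{s}]]$. Such a quotient by a nonzero element is regular iff that element is not in $\mathfrak{m}^2$, where $\mathfrak{m}=(f_1,\dots,f_n)$. Since $\Phi_L$ is a polynomial, membership in $\mathfrak{m}^2$ in the completion is equivalent to membership in $(f_1,\dots,f_n)^2$ in the polynomial ring, by graded degree comparison. This gives the criterion.

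The main obstacle is the passage from $R(G;\mathbb{Q})$, restricted to $S$, to the completed invariant polynomial picture. Two points need care. The first is that the completion of $R$ at $I$ really is the image of $\mathbb{R}[[f_i]]$, i.e.\ the finiteness and Artin--Rees argument above. The second is that the Chern character identifies $\widehat{R(G;\mathbb{Q})}\otimes\mathbb{R}$ with $\mathbb{R}[[f_1,\dots,f_n]]$ even when $\pi_1(G)$ has torsion. For the second, I would use $R(G;\mathbb{Q})=R(T;\mathbb{Q})^W$, the fact that completion commutes with taking $W$-invariants in characteristic zero, and the fact that the exponential identifies the completion of $R(T;\mathbb{Q})$ with $\mathbb{Q}[[\mathfrak{t}^*_\mathbb{Z}]]$.
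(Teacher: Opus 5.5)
Your proof is correct and has the same skeleton as the paper's: Theorem \ref{rep}, completion at the augmentation ideal, the Chern character to reduce to the map $\mathbb{R}[[f_1,\dots,f_n]]\to\mathbb{R}[[\mathfrak{s}]]$, and regularity of the image iff the generator of the kernel is not in $\mathfrak{m}^2$. Where you differ is in how you identify the kernel.

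The paper stays in the completed ring. It computes Krull dimensions to see that $\mathfrak{p}$ is a height-one prime in the UFD $\widehat{\text{Sym}^*(\mathfrak{t}^*)^W}$, hence principal, and then argues via vanishing loci that a generator can be taken to be $\Phi_L$. You instead compute the kernel already in $\mathbb{R}[f_1,\dots,f_n]$ by divisibility, and transfer it to the completion by flatness.

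Your route needs neither the dimension count nor the UFD lemma, and it actually proves that $\Phi_L$ generates. The paper's ``minimal degree'' step is only sketched, since degrees and vanishing sets are not directly meaningful for a generator of an ideal in a power series ring. You also flag that completing the image must agree with the image of the completion, which the paper passes over. The paper's route, on the other hand, gets principality for free without any knowledge of semi-invariants.

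The one step you should not leave as a bare assertion is ``anti-invariant, hence again divisible by that product.'' Write $P=\prod_{w\in W/W_{[L]}}wL$ and take a reflection $s_\alpha$.
\begin{itemize}
\item Lines $[wL]$ that $s_\alpha$ swaps in pairs contribute $+1$ to $s_\alpha P/P$.
\item Lines orthogonal to $\alpha$ contribute $+1$.
\item The line of $\alpha$, if it occurs, contributes $-1$.
\end{itemize}
So $s_\alpha P=-P$ forces $\alpha\parallel wL$ for some $w$. Your cofactor $g$ then satisfies $s_\alpha g=-g$, so it vanishes on $\ker wL$, whence $wL\mid g$. By semi-invariance every $w'L$ divides $g$, and since the $w'L$ are pairwise non-proportional, $P\mid g$.

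The same computation shows that in general $P$ is only semi-invariant for some character $\chi\colon W\to\{\pm1\}$, not necessarily $\text{sgn}$. For example, in type $B_n$ with $L=e_1$, $P=e_1\cdots e_n$ is neither invariant nor $\text{sgn}$-anti-invariant. Your divisibility argument covers this case verbatim, with $\Phi_L=P^2$ whenever $\chi$ is nontrivial.
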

Theorem \ref{invtpoly}, together with a sharp bound on the degree on the distinguished polynomial (Lemma \ref{degreebound}), leads us to the following classification of isotropy formal action of corank 1.
\begin{theorem}\label{corank1}
		Let $G$ be a compact and connected simple Lie group of rank $n$, $S$ a torus subgroup of dimension being $n-1$, $T$ a maximal torus of $G$ containing $S$, $\mu\in \chi(T)$ a primitive character of $T$ such that $S=\text{ker }\mu$, $L$ a primitive weight in the weight lattice $\mathfrak{t}^*_\mathbb{Z}$ corresponding to $\mu$ after exponentiation, and $W$ the Weyl group of $G$. The following table classifies the isotropy formality of $(G, S)$ in terms of $L$. Here $\omega_i$ is the $i$-th fundamental weight of the relevant simple Lie group following the Bourbaki convention.
		\begin{center}
		\begin{tabular}{|c|c|}
		\hline
		$G$&$L$\\
		\hline
		$A_n, n\geq 2, n\neq 3$& $L\in W\cdot \omega_1$\\
		\hline
		$A_3$&$L\in W\cdot \omega_1\cup W\cdot \omega_2$\\
		\hline
		$B_2, C_2$&$\text{Any primitive weight}$\\
		\hline
		$B_n, C_n, n=3, 4$&$L\in W\cdot \omega_1\cup W\cdot \omega_n$\\
		\hline
		$B_n, C_n, n\geq 5$&$L\in W\cdot \omega_1$\\
		\hline
		$D_4$&$L\in W\cdot\omega_1\cup W\cdot \omega_3\cup W\cdot \omega_4$\\
		\hline
		$D_n, n\geq 5$&$L\in W\cdot \omega_1$\\
		\hline
		$G_2$&$\text{Any primitive weight}$\\
		\hline
		$F_4, E_6, E_7, E_8$&$\text{No such primitive weights}$\\
		\hline
		\end{tabular}
		\end{center}
\end{theorem}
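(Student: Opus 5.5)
The plan is to deduce Theorem \ref{corank1} from Theorem \ref{invtpoly} together with the degree bound of Lemma \ref{degreebound}. Write $d_1\le \cdots\le d_n$ for the degrees of the fundamental invariants $f_1,\dots,f_n$ of $W$. Every element of $(f_1,\dots,f_n)^2$ is a combination of products $f_if_j$, so it has degree at least $2d_1=4$. The ideal's component in a fixed degree $m$ is spanned by the products $f_if_jg$ with $g$ invariant of degree $m-d_i-d_j$.

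Hence the criterion $\Phi_L\notin (f_1,\dots,f_n)^2$ splits into two steps.

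First, I compute $\deg\Phi_L=n(L)\,|W/W_{[L]}|$, which is $n(L)$ times half the size of the orbit $W\cdot L$ when $-L\in W\cdot L$, and the full orbit size otherwise.

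Second, I decide whether the degree-$\deg\Phi_L$ invariant $\Phi_L$ is \emph{indecomposable modulo products of invariants}, i.e.\ whether it has a nonzero component along some generator $f_k$ in $\mathrm{Sym}^*(\mathfrak t^*)^W/(f_1,\dots,f_n)^2$.

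A necessary condition is that $\deg\Phi_L$ equals some $d_k$. I expect Lemma \ref{degreebound} to give exactly this kind of sharp bound: $\deg\Phi_L\le d_n$ is needed for formality, and the orbit sizes force $L$ into a short list of small orbits.

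I then run through the simple types using the orbit sizes of primitive weights. A generic primitive $L$ has orbit of size $|W|$, far exceeding $d_n$, so only $L$ with large stabilizer $W_{[L]}$ survive. In practice these are multiples of fundamental weights, or weights on low-dimensional faces of the Weyl chamber, and primitivity then pins $L$ down to $W\cdot\omega_i$ or a few small combinations.

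For $A_n$ the orbit of $\omega_i$ has size $\binom{n+1}{i}$, and $-\omega_1\notin W\omega_1$. The product over $W\omega_1$ is $\prod_j x_j$, of degree $n+1$, which equals the top elementary symmetric function $e_{n+1}$ (in terms of the $n+1$ coordinates, modulo the relation $\sum x_j=0$) and is a generator. For $\omega_2$ in $A_3$, the orbit has size $6$ and is closed under negation, giving degree $3$ with $n(L)$ to be checked. The remaining fundamental weights exceed $d_n=n+1$ once $n\ge 4$.

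For $B_n/C_n$, $\omega_1$ gives $\prod x_j^2$, which is a generator. The orbit of $\omega_n$ has size $2^n$, so $\deg\Phi_L=2^{n-1}$; this is at most $d_n=2n$ exactly when $n\le 4$. For $D_n$ the spin weights have orbit size $2^{n-1}$ against $d_n=2n-2$, which survives only at $n=4$, consistent with triality. In rank $2$ ($B_2$, $G_2$) the degrees are small, so the explicit products should be checked against $(f_1,f_2)^2$ directly for all primitive $L$. For the exceptional groups $F_4,E_6,E_7,E_8$, the smallest orbits (e.g.\ $27$, $56$, $240$, $24$ after halving) already violate the bound, or land in a degree where the generator coefficient vanishes.

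The main obstacle is the second step in the borderline cases where $\deg\Phi_L=d_k$ but it is unclear whether $\Phi_L$ is decomposable. Examples are $\omega_2$ for $A_3$, $\omega_n$ for $B_3,B_4$, $D_4$, and the various rank-two weights. For these I would expand $\Phi_L$ in power sums: $\Phi_L$ is indecomposable iff its image in the degree-$d_k$ indecomposables is nonzero, and for the classical types this is detected by the coefficient of the power sum $p_{d_k}$ in the Newton expansion of $\Phi_L$. This reduces each case to a finite symmetric-function computation, which I would carry out explicitly, for instance by evaluating at a point where all lower power sums vanish.

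For the exclusions I would rely on the degree bound, which handles everything except finitely many cases. Those remaining cases I would dispose of by the same power-sum coefficient check.
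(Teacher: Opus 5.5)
Your strategy is the paper's. Theorem~\ref{invtpoly} reduces everything to whether $\Phi_L\notin(f_1,\dots,f_n)^2$. The degree count excludes almost all $L$; your sharpening $\deg\Phi_L\in\{d_1,\dots,d_n\}$ is correct. The survivors are then checked by hand.

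\textbf{Where it fails: $F_4$.} The step that fails as written is the exclusion of $F_4$. Its smallest orbits are $W\cdot\omega_1$ (the $24$ long roots) and $W\cdot\omega_4$ (the $24$ short roots). Both are closed under $-1$, so $|W/W_{[L]}|=12$, which \emph{equals} $d_4=12$. Orbit size alone therefore does not violate the bound, and your ``$24$ after halving'' is off by a factor of two. Your fallback (``the generator coefficient vanishes'') does not apply either, since the kernel contains no invariant of degree $12$ at all.

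What excludes $F_4$ is $n(L)=2$. If $W\cdot L$ is an orbit of roots, pick a simple root $\alpha$ in it. Then $s_\alpha$ negates $\alpha$ and permutes the remaining positive roots, hence the remaining factors of $P=\prod_{w\in W/W_{[L]}}wL$. So $s_\alpha P=-P$, $P$ is not $W$-invariant, and $\Phi_L=P^2$ has degree $24>12$. Strictly, $P$ is fixed by the long simple reflections, so it is neither invariant nor anti-invariant; all that matters is that it is not invariant. This is the content of the paper's $\deg\Phi_{\omega_i}=24$.

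The same observation is needed in the other borderline exclusions where the halved orbit size hits a fundamental degree. Examples are $L=\omega_2$ for $B_3$ and $C_3$ (where $6=d_3$), and $L$ a root of $A_2$ (where $3=d_2$).

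\textbf{Smaller point: the type $D$ test.} ``Coefficient of $p_{d_k}$'' is not the right test in type $D$. The Pfaffian $x_1\cdots x_n$ is a fundamental invariant, and it is exactly $\Phi_{\omega_1}$ for $D_n$. You never treat this case, though it is immediate. So for $n$ even you must test against both $p_n$ and $x_1\cdots x_n$. For the $D_4$ spin weights, transporting from $\omega_1$ by triality (which preserves $(f_1,\dots,f_4)^2$) is fine.

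Your explicit checks for $\omega_n$ in $B_3,B_4$ (hence $C_3,C_4$) do succeed, and they are worth writing out, since the paper only sketches the exclusions there. Up to scalars, $\Phi_{\omega_3}=2p_4-p_2^2$ in $B_3$. In $B_4$, the point with $(x_i^2)=(1,i,-i,-1)$ has $p_2=p_4=p_6=0$, $p_8=4$ and $\Phi_{\omega_4}=64\neq 0$, so the $p_8$-coefficient is nonzero.

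\textbf{Smaller point: the rank-two rows.} For $G_2$ the paper avoids computation and uses Theorem~\ref{circle}. Since $-\mathrm{Id}\in W$, every character of $G$ is real, so every restriction to the circle $S$ is self-dual. The same one-line argument covers $B_2=C_2$, for which the paper gives no separate argument.

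Your direct route also works, but it must be carried out for a parametric $L$. For example, for $B_2$ and $L=ax_1+bx_2$ one gets, up to scalar, $\Phi_L=(a^2x_1^2-b^2x_2^2)(b^2x_1^2-a^2x_2^2)$. This is a multiple of $f_1^2=(x_1^2+x_2^2)^2$ only if $(a^2+b^2)^2=0$, which never happens.
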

It should be noted that a characterization of isotropy formality of general corank 1 pairs $(G, K)$ is given in \cite{CH}. This characterization consists of elaborate steps of checking the fundamental groups and total Betti number of a certain homogeneous space. The authors approach this characterization by way of a classification of $G/K$ with the rational homotopy type of a product of an odd- and an even-dimensional sphere. Table 1 in \cite{CH}, which is part of the characterization algorithm, determines isotropy formality for  irreducible compact connected pairs $(G, K)$ with $G/K$ simply-connected and its rational cohomology isomorphic to that of the product of two spheres. The isotropy formal pairs in the table are consistent with those listed in Theorem \ref{corank1}. 

We also find another characterization of isotropy formality with a different flavor. We first observe that formality of $G/S$ (where $S$ is a torus subgroup) is equivalent to the nontrivial $K$-theoretic pairing of certain two special $K$-theory classes (Proposition \ref{nondegenformal}). Based on this we obtain the following equivariant analogue. 
\begin{theorem}\label{indexthry}
	Let $G$ be a compact connected Lie group and $S$ a torus subgroup, and $\pi: G/S\to\text{pt}$ be the collapsing map. Denote by $A$ the subring of $K_S^*(G/S)\otimes\mathbb{Q}$ generated by $S$-equivariant line bundles $G\times_S\mathbb{C}_\mu$ where $\mu\in\mathfrak{s}^*$. Define $\mathcal{L}_S$ to be the $R(S; \mathbb{Q})$-submodule of $K_S^*(G/S)\otimes\mathbb{Q}$ generated by the odd equivariant $K$-theory classes of $G/S$ defined by the $\delta$-construction (see Definitions \ref{deltaconstruction} and \ref{equivsamelson}). Then $(G, S)$ is isotropy formal if and only if there exist $a\in A$ and $b\in\bigwedge\nolimits_{R(S; \mathbb{Q})}^{\text{rank }G-\text{rank }S}\mathcal{L}_S$ such that $\pi_*(ab)\notin I(S; \mathbb{Q})$. 
\end{theorem}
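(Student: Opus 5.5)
We plan to prove Theorem \ref{indexthry} by reducing it, via the completion/localization at the augmentation ideal, to the nonequivariant statement of Proposition \ref{nondegenformal} together with the RKEF machinery of \cite{F2}. Write $r=\text{rank }G-\text{rank }S$. First we describe $K_S^*(G/S;\mathbb{Q})$ explicitly. Since $S$ is a torus, $G/S\cong G\times_S S/S$, so $K^*_S(G/S)\cong K^*_{S\times S}(G)$. By the (Hodgkin-type) Künneth spectral sequence used in \cite{F2,CFok}, rationally
\[
K_S^*(G/S;\mathbb{Q})\cong \bigl(R(S;\mathbb{Q})\otimes_{R(G;\mathbb{Q})}R(S;\mathbb{Q})\bigr)\otimes \textstyle\bigwedge^*(\text{odd generators}) ,
\]
or more precisely a Tor over $R(G;\mathbb{Q})$. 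The subring $A$ is the image of the even part $R(S;\mathbb{Q})\otimes_{R(G;\mathbb{Q})}R(S;\mathbb{Q})$, the first factor acting through the $R(S;\mathbb{Q})$-module structure and the second through the line bundles $G\times_S\mathbb{C}_\mu$. The odd classes from the $\delta$-construction (Definitions \ref{deltaconstruction}, \ref{equivsamelson}) come from elements of $I(G;\mathbb{Q})$ whose restrictions to $S$ vanish modulo the relations, i.e. they represent the $\text{Tor}_1$ part, and $\mathcal{L}_S$ is the module they span.

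Next we compute $\pi_*$ for a product $ab$ with $b\in\bigwedge^r\mathcal{L}_S$. Since $\dim G/S$ is odd exactly when $r$ is odd, only the top exterior degree $r$ can push forward nontrivially to $R(S;\mathbb{Q})$. We would show, by the equivariant index formula (or by localizing to $S$-fixed points $N_G(S)/S$ after passing to the $T$-equivariant picture and Atiyah–Bott), that $\pi_*(a\cdot \delta(\rho_1)\wedge\cdots\wedge\delta(\rho_r))$ equals, up to a unit, a Jacobian-type determinant of the restrictions $i^*\rho_j$ twisted by $a$, evaluated through the pairing $R(S)\otimes_{R(G)}R(S)\to R(S)$. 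This is the equivariant analogue of the computation in Proposition \ref{nondegenformal}, which we can specialize to by applying the augmentation $R(S;\mathbb{Q})\to\mathbb{Q}$; note $\pi_*(ab)\notin I(S;\mathbb{Q})$ is exactly the statement that its augmentation is nonzero, so the condition is equivalent to the nonequivariant pairing $\pi_*(\bar a\bar b)\neq 0$ on $K^*(G/S;\mathbb{Q})$ for the images $\bar a,\bar b$, provided the forgetful map sends $A$ onto the line-bundle subring and $\mathcal{L}_S$ onto the nonequivariant $\delta$-classes.

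Then the argument closes as follows. If $(G,S)$ is isotropy formal, then by RKEF \cite{F2} $K_S^*(G/S;\mathbb{Q})$ is free over $R(S;\mathbb{Q})$ and the forgetful map $K_S^*(G/S;\mathbb{Q})\otimes_{R(S)}\mathbb{Q}\to K^*(G/S;\mathbb{Q})$ is an isomorphism, hence surjective; so the nonequivariant classes witnessing formality... but here is the subtlety: Proposition \ref{nondegenformal} concerns formality of $G/S$, not isotropy formality. So instead we use Theorem \ref{rep}: regularity of $R=i^*R(G;\mathbb{Q})$ at $I$ means $I/I^2$ has the right dimension, equivalently $r$ of the restricted generators $i^*\rho_j$ can be chosen so that the remaining ones form a regular sequence cutting out $R$, i.e. the Jacobian above is a unit at the augmentation ideal. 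Thus both directions become: existence of $a,b$ with nonvanishing augmented Jacobian $\iff$ $R$ regular at $I$, which by Theorem \ref{rep} is isotropy formality. The main obstacle is the second step, computing $\pi_*(ab)$ equivariantly as this Jacobian/Koszul determinant; I would first attempt it via the Koszul resolution of $R(S)$ over $R(G)$ localized at $I$, identifying $\pi_*$ with the top-degree Tor-pairing.
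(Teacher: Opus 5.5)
Your second paragraph has the right reduction, and it is the one the paper uses. Since $f(\lambda x)=\dim(\lambda)f(x)$ for $\lambda\in R(S;\mathbb{Q})$, the condition $\pi_*(ab)\notin I(S;\mathbb{Q})$ is equivalent to $\pi_*(f(a)f(b))\neq 0$. Here $f(A)=R(S;\mathbb{Q})/i^*I(G;\mathbb{Q})$ and $f\bigl(\bigwedge^r_{R(S;\mathbb{Q})}\mathcal{L}_S\bigr)=\bigwedge^r_{\mathbb{Q}}\delta^{G/S}(\ker i^*)$, with $r=\text{rank }G-\text{rank }S$.

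You then set this aside, on the grounds that Proposition \ref{nondegenformal} only detects formality of $G/S$. Instead you rest both directions on an equivariant formula expressing $\pi_*(ab)$ as a Jacobian determinant, which you neither state precisely nor prove. That is a genuine gap, since it carries the whole argument. The formula is also ill-posed as written: the $\rho_j$ entering $b$ lie in $\ker(i^*)$, so their restrictions $i^*\rho_j$ vanish. What actually matters is linear independence of the $d\rho_j\otimes 1$ in $\Omega^1_{R(G;\mathbb{Q})/\mathbb{Q}}\otimes_{R(G;\mathbb{Q})}\mathbb{Q}$, i.e. of the $\delta^{G/S}(\rho_j)$ (Remark \ref{deltaredux}). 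Even a correct Jacobian criterion would not produce the partner $a$ in the forward direction; that is a Poincar\'e-duality statement you never address.

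The missing observation is that the difference between formality and isotropy formality is already encoded in where $b$ lives, so no computation of $\pi_*$ is needed.

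\emph{Converse.} By Lemma \ref{key}, $f(b)\in\bigwedge^r_{\mathbb{Q}}\delta^{G/S}(\ker i^*)\subseteq\bigwedge^r P$, and $\dim P\le r$ by Proposition \ref{kthrystructure}. A nonzero pairing forces $f(b)\neq 0$, hence $\dim\delta^{G/S}(\ker i^*)=r$. Proposition \ref{rankSam} then gives isotropy formality. This dimension condition is exactly what the proof of Theorem \ref{rep} shows to be equivalent to regularity, so your detour through Theorem \ref{rep} is unnecessary.

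\emph{Forward.} Isotropy formality implies formality of $G/S$ by \cite[Theorem A]{CFok}. So Proposition \ref{nondegenformal} supplies $a\in R(S;\mathbb{Q})/i^*I(G;\mathbb{Q})$ and $b\in\bigwedge^{\dim P}P$ with $\pi_*(ab)\neq 0$. Corollary \ref{pequaldel} gives $P=\delta^{G/S}(\ker i^*)$ with $\dim P=r$. Hence $b$ lifts to $\bigwedge^r_{R(S;\mathbb{Q})}\mathcal{L}_S$ via the classes $\delta_S^{G/S}(\rho)$, and $a$ lifts to $A$ via the line bundles $G\times_S\mathbb{C}_\mu$. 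Since $\pi_*$ commutes with $f$, the lifted pair satisfies $\pi_*(\widetilde{a}\widetilde{b})\notin I(S;\mathbb{Q})$.

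Two smaller points. Your description of $K_S^*(G/S;\mathbb{Q})$ as $(R(S)\otimes_{R(G)}R(S))\otimes\bigwedge^*$ holds only in the regular case. Your parity remark does not show that only exterior degree $r$ pushes forward nontrivially. Neither point is needed once the argument is set up as above.
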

We end this paper by discussing the isotropy formality of two examples of pairs using the aforementioned main results.

Throughout this paper, $G$ always means a compact connected Lie group and $K$ its connected Lie subgroup unless otherwise specified. 

\textbf{Acknowledgment }We would like to gratefully acknowledge the support by the Xi'an Jiaotong-Liverpool University Research Development Fund RDF 23-02-029.
\section{Preliminaries}
In this section, we will recall a number of definitions and results about the (equivariant) $K$-theory of compact homogeneous spaces and $K$-theoretic equivariant formality, which are all crucial ingredients in the proofs of the main results of this paper.
\begin{definition}(\cite[\S I.4]{Ho}, \cite[\S 3]{BZ}, \cite[Definitions 2.2 and 2.5]{F})\label{deltaconstruction}
	Let $\delta: R(G)\to K^{-1}(G)$ be the $\mathbb{Z}$-linear map which sends a complex representation $\rho$ with underlying vector space $V$ to the $K$-theory class represented by the complex of vector bundles
	\begin{align*}
		0\longrightarrow G\times\mathbb{R}\times V&\longrightarrow G\times\mathbb{R}\times V\longrightarrow 0\\
		(g, t, v)&\mapsto\begin{cases}(g, t, -t\rho(g)v),&\ \text{if }t>0\\ (g, t, tv),&\ \text{if }t\leq 0.\end{cases}
	\end{align*}
	More generally, for an element $\rho\in\text{ker}(i^*: R(G)\to R(K))$ which is the formal difference $\rho_1-\rho_2\in R(G)$ of two complex $G$-representations restricting to the same $K$-representations, we define the $\mathbb{Z}$-linear map $\delta^{G/K}:\text{ker}(i^*)\to K^{-1}(G/K)$ which sends $\rho$ to the complex of vector bundles
	\begin{align*}
		0\longrightarrow G/K\times\mathbb{R}\times V&\longrightarrow G/K\times\mathbb{R}\times V\longrightarrow 0\\
		(gK, t, v)&\mapsto\begin{cases}(gK, t, t\rho_1(g)\rho_2(g)^{-1}v),&\ \text{if }t>0\\ (gK, t, tv),&\ \text{if }t\leq 0.\end{cases}
	\end{align*}
	The map $\delta_K^{G/K}: \text{ker}(i^*)\to K_K^{-1}(G/K)$ is defined using the same complex of vector bundles and equipping it with the $K$-action on $G/K\times \mathbb{R}\times V$ given by $k\cdot(gK, t, v)=(kgK, t, \rho_1(k)v)$. The map $\delta_K^G: R(G)\to K_K^{-1}(G)$ can be defined similarly (here $K$ acts on $G$ by conjugation). We will also use the same notation to denote the rationalized maps from the rational representation ring to the rational $K$-theory $K^{-1}(G; \mathbb{Q})$, $K^{-1}(G/K; \mathbb{Q})$ etc.
\end{definition}
\begin{definition}(\cite[p. 172]{H}, \cite{GD}). Let $A$ be a ring and $B$ an $A$-algebra. The module of K\"ahler differentials $\Omega^1_{B/A}$ of $B$ over $A$ is the quotient of the $B$-module freely generated by the symbols $\{db| b\in B\}$ by the submodule generated by the following sets
	\begin{enumerate}
		\item(Constancy of elements of $A$) $\{da| a\in A\}$, 
		\item(Linearity of $d$) $\{d(b_1+b_2)-db_1-db_2| b_1, b_2\in B\}$, and 
		\item(Derivation) $\{d(b_1b_2)-b_1db_2-b_2db_1| b_1, b_2\in B\}$.
	\end{enumerate}
	The Grothendieck differentials $\Omega^*_{B/A}$ is defined to be the exterior $B$-algebra $\displaystyle\bigoplus_{p=0}^\infty \bigwedge\nolimits_B^p\Omega_{B/A}^1$.
\end{definition}
\begin{theorem}\label{kthrygrp}
	\begin{enumerate}
		\item \textnormal{(\cite[Theorem 2.1]{Ho}).}\label{Ho} The map $\delta$ satisfies
		\[\delta(\rho_1\otimes\rho_2)=\text{dim}(\rho_1)\delta(\rho_2)+\text{dim}(\rho_2)\delta(\rho_1).\]
		Moreover, $\delta$ induces an isomorphism of Hopf algebras
		\[\bigwedge\nolimits_\mathbb{Q}^*(\text{Im}(\delta)\otimes\mathbb{Q})\cong K^*(G; \mathbb{Q}).\]
		\item \textnormal{(cf. \cite{BZ})}\label{BZ} The map 
		\begin{align*}
			\Omega^*_{R(G)/\mathbb{Z}}\otimes_{R(G)}R(K)&\to K_K^*(G)\\
			d\rho_1\otimes\rho_2&\mapsto \rho_2\cdot\delta_K^G(\rho_1)
		\end{align*}
		is injective. In particular, when $\pi_1(G)$ is torsion-free, the map is an isomorphism.
	\end{enumerate}
\end{theorem}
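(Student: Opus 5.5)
Part (1) is Hodgkin's theorem, and I would take it from \cite[Theorem 2.1]{Ho} without reproving it. For orientation, the derivation identity is easy to see directly. Under $K^{-1}(G)\cong[G,U]$, the class $\delta(\rho)$ is the map $g\mapsto\rho(g)$. On $V_1\otimes V_2$ we can factor
\[(\rho_1\otimes\rho_2)(g)=(\rho_1(g)\otimes 1)(1\otimes\rho_2(g)).\]
Products of maps into $U$ give sums in $K^{-1}$. Moreover $g\mapsto\rho_1(g)\otimes 1$ is $\dim\rho_2$ copies of $\rho_1$, and similarly for the second factor. The Hopf algebra isomorphism comes from the Künneth/Hodgkin spectral sequence together with the observation that rationally $K^*(G;\mathbb{Q})$ is an exterior algebra on primitives, which are exactly the $\delta$-classes.

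For part (2), the first step is to check that the map is well defined.
\begin{itemize}
\item \emph{Leibniz rule.} The same factorization argument, carried out $K$-equivariantly with $K$ acting on $G$ by conjugation, gives
\[\delta_K^G(\rho_1\rho_2)=\rho_1|_K\cdot\delta_K^G(\rho_2)+\rho_2|_K\cdot\delta_K^G(\rho_1).\]
Here the bundle $G\times V_1\otimes V_2$ with automorphism $\rho_1(g)\otimes 1$ is the product of $\delta^G_K(\rho_1)$ with the $K$-module $V_2$, viewed as an element of $R(K)$. Hence $d\rho\mapsto\delta_K^G(\rho)$ is an $R(G)$-linear derivation and factors through $\Omega^1_{R(G)/\mathbb{Z}}\otimes_{R(G)}R(K)$.
\item \emph{Extension to the exterior algebra.} Since $K_K^*(G)$ is graded commutative and the $\delta$-classes are odd, with squares vanishing because they are pulled back from odd classes on $U$, the map extends multiplicatively to $\Omega^*$.
\end{itemize}

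Next, the torsion-free case. Write $K_K^*(G)\cong K^*_{G\times K}(G\times G)$ with $G\times K$ acting appropriately. Hodgkin's Künneth spectral sequence then reads
\[\text{Tor}^{R(G)\otimes R(G)}_*(R(G),\,R(G)\otimes R(K))\Rightarrow K_K^*(G),\]
and its $E_2$ term is $HH_*(R(G))\otimes_{R(G)}R(K)$. When $\pi_1(G)$ is torsion-free, $R(G)$ is a polynomial ring tensored with a Laurent polynomial ring, hence smooth over $\mathbb{Z}$. The Hochschild–Kostant–Rosenberg theorem identifies this with $\Omega^*_{R(G)/\mathbb{Z}}\otimes_{R(G)}R(K)$, which is free. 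The spectral sequence collapses because the algebra generators $d\rho_i$ are realized by the honest classes $\delta_K^G(\rho_i)$. There is no extension problem, since the target is free over $R(K)$. Finally, the edge map agrees with the map of the theorem, which gives the isomorphism.

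For injectivity in general, I would choose a finite cover $p:\widetilde G\to G$ with $\pi_1(\widetilde G)$ torsion-free, a simply connected group times a torus, with central kernel $Z$, and set $\widetilde K=p^{-1}(K)$. Pullback gives a commutative square, compatible with $\delta$:
\begin{itemize}
\item top row: the map $\Omega^*_{R(G)}\otimes R(K)\to K_K^*(G)$;
\item bottom row: the isomorphism $\Omega^*_{R(\widetilde G)}\otimes R(\widetilde K)\to K_{\widetilde K}^*(\widetilde G)$;
\item vertical maps: induced by $p$ on both sides.
\end{itemize}
Injectivity therefore reduces to injectivity of the left vertical arrow
\[\Omega^*_{R(G)}\otimes_{R(G)}R(K)\to\Omega^*_{R(\widetilde G)}\otimes_{R(\widetilde G)}R(\widetilde K).\]
This is the step I expect to be the main obstacle. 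The natural approach is to use the grading of $R(\widetilde G)$ and $R(\widetilde K)$ by the character group $\widehat Z$. In that grading $R(G)$ and $R(K)$ are the degree-zero parts, and one would try to show that $\Omega_{R(G)}$ injects into the degree-zero part of $\Omega_{R(\widetilde G)}$. This is delicate because $R(\widetilde G)$ is finite but not étale over $R(G)$ at the fixed points of $Z$. My first attempt would be to check that $\Omega^1_{R(G)}$ is torsion-free, by describing $R(G)$ explicitly as $Z$-invariants of a (Laurent) polynomial ring, and then to verify injectivity after inverting a nonzerodivisor. On the complement of the ramification locus the map is étale, so the differentials agree there.
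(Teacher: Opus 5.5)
The paper does not prove this theorem; it quotes (1) from Hodgkin and (2) from Brylinski--Zhang. Your treatment of (1), your well-definedness checks and your torsion-free case are consistent with those sources. There is one slip: the second argument of your $\text{Tor}^{R(G)\otimes R(G)}_*$ should be $R(K)$, made into a module by multiplication followed by restriction, not $R(G)\otimes R(K)$.

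The genuine gap is the general injectivity, and the reduction you propose cannot close it. The target $\Omega^*_{R(\widetilde G)}\otimes_{R(\widetilde G)}R(\widetilde K)$ of your left vertical arrow is a free $R(\widetilde K)$-module, so the arrow cannot be injective once its source has torsion, and it does. Take $G=PSU(3)$ and $\widetilde G=SU(3)$. Then $R(G)=\mathbb{Z}[X,Y,Z]/(Y^3-XZ)$ with $X=\sigma^3$, $Y=\sigma\bar\sigma$, $Z=\bar\sigma^3$ inside $R(SU(3))=\mathbb{Z}[\sigma,\bar\sigma]$; this is the paper's presentation. For $K=G$, set $\tau=Y\,dX\wedge dZ-3Z\,dX\wedge dY$.
\begin{itemize}
\item $\tau\neq 0$ in $\Omega^2_{R(G)}$. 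Grade by $\deg\sigma=\deg\bar\sigma=1$ and put $r=3Y^2dY-Z\,dX-X\,dZ$. The degree-$8$ part of the relation module is $\mathbb{Z}\,(r\wedge dY)=\mathbb{Z}\,(-Z\,dX\wedge dY+X\,dY\wedge dZ)$, which has no $dX\wedge dZ$ component, while $\tau$ does.
\item $\tau$ maps to $(9\sigma^3\bar\sigma^3-9\sigma^3\bar\sigma^3)\,d\sigma\wedge d\bar\sigma=0$.
\end{itemize}
So checking that $\Omega^1_{R(G)}$ is torsion-free (true here) and then using generic \'etaleness does not suffice, because torsion reappears in $\Omega^p$ for $p\ge 2$.

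Base change along $R(G)\to R(K)$ creates further torsion, and for trivial $K$ this is not just a defect of your method. In that case $\Omega^1_{R(G)}\otimes_{R(G)}\mathbb{Z}\cong I/I^2\cong\mathbb{Z}^2\oplus\mathbb{Z}/27$ by the paper's own example. The $\mathbb{Z}/27$ summand is generated by $e=(9\,dY-dX-dZ)\otimes 1$. By the derivation rule, $\delta(9Y-X-Z)$ pulls back to $9(3\delta\sigma+3\delta\bar\sigma)-27\delta\sigma-27\delta\bar\sigma=0$ in $K^{-1}(SU(3))$. Since the covering induces a rational isomorphism on $K$-theory, $\delta(9Y-X-Z)$ is torsion in $K^{-1}(PSU(3))$. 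That torsion subgroup has order at most $9$, by the Atiyah--Hirzebruch spectral sequence: the odd-degree torsion of $H^*(PSU(3);\mathbb{Z})$ is $\mathbb{Z}/3$ in degrees $5$ and $7$. Hence the nonzero class $9e$ lies in the kernel of the map in the statement. So the integral injectivity claim is false as stated for trivial $K$. In general, your covering argument can only work under extra hypotheses that make $\Omega^*_{R(G)/\mathbb{Z}}\otimes_{R(G)}R(K)$ torsion-free.
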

\begin{remark}\label{deltaredux}
	We have that $\Omega^1_{R(G; \mathbb{Q})/\mathbb{Q}}\otimes_{R(G; \mathbb{Q})}\mathbb{Q}\cong\text{Im}(\delta)\otimes\mathbb{Q}$ via the map $d\rho\otimes 1\mapsto \delta(\rho)$, and that $\delta^{G/K}$ also satisfies the derivation property of $\delta$ as in Theorem \ref{kthrygrp} (\ref{Ho}).
\end{remark}
\begin{proposition}\label{log}
	Let $\rho_1\cdot\rho_2: G\to \text{GL}(V)$ be the pointwise multiplication $(\rho_1\cdot\rho_2)(g):=\rho_1(g)\rho_2(g)$. Then $\delta(\rho_1\cdot\rho_2)=\delta(\rho_1)+\delta(\rho_2)$.
\end{proposition}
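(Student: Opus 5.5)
The plan is to prove the identity directly at the level of the complexes of vector bundles in Definition \ref{deltaconstruction}. Write $\rho_1,\rho_2: G\to \mathrm{GL}(V)$, so that $\rho_1\cdot\rho_2$ is again a map $G\to\mathrm{GL}(V)$; it need not be a homomorphism, but the clutching construction only uses the map $g\mapsto\rho(g)$. Indeed $\delta(\rho)$ is the class in $K^{-1}(G)=\widetilde K^0(\Sigma G_+)$, or equivalently $[G,\mathrm{GL}(\infty)]$, obtained by clutching two trivial bundles along $\rho: G\to \mathrm{GL}(V)$. So I will first reformulate $\delta$ as the map that sends a continuous $f: G\to\mathrm{GL}(V)$ to the class $[f]\in K^{-1}(G)=[G_+,U]$, where the overall sign (the factor $-t$ versus $t$) is a fixed convention independent of $f$. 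The first step is to check that this identification is legitimate. The complex in the definition is the standard representative of the image of $[f]$ under the suspension isomorphism $K^{-1}(G)\cong K^0_c(G\times\mathbb{R})$. Concretely, $f$ and $-f$ are homotopic through $e^{i\theta}f$, so the sign is immaterial.

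The key step is the well-known fact that the group structure on $K^{-1}(G)=[G_+,U]$ induced by direct sum agrees with pointwise multiplication. The homotopy
\[
\begin{pmatrix} f_1f_2 & 0\\ 0& 1\end{pmatrix}\simeq\begin{pmatrix} f_1 & 0\\ 0& f_2\end{pmatrix}
\]
is given by conjugating $\mathrm{diag}(f_2,1)$ with the rotation path from the identity to the swap matrix, using that $\mathrm{GL}(2\dim V)$ rotations are connected to the identity. Concretely, with $R_\theta$ the rotation by $\theta$ acting on $V\oplus V$, the path $\theta\mapsto \mathrm{diag}(f_1,1)\,R_\theta\,\mathrm{diag}(f_2,1)\,R_\theta^{-1}$ runs from $\mathrm{diag}(f_1f_2,1)$ to $\mathrm{diag}(f_1,f_2)$. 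Hence $[f_1f_2]=[f_1\oplus f_2]=[f_1]+[f_2]$, where the last equality holds because the clutching complex of a direct sum is the direct sum of the complexes.

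Translating back through the suspension isomorphism gives $\delta(\rho_1\cdot\rho_2)=\delta(\rho_1)+\delta(\rho_2)$. I expect the main obstacle to be bookkeeping rather than mathematics. One point is making precise that the two-term complex in Definition \ref{deltaconstruction} corresponds to the homotopy class of $\rho$ in $[G,\mathrm{GL}]$ compatibly with sums. The other is ensuring that the homotopy above is through invertible maps, which is what makes the compactly supported complexes homotopic over $G\times\mathbb{R}$. Both are standard, so I would state the identification and cite Bott periodicity/clutching conventions rather than belabor them.
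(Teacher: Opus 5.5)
Your proposal is correct and follows essentially the same route as the paper, which likewise deduces the identity from the homotopy between $g\mapsto\mathrm{diag}((\rho_1\cdot\rho_2)(g), I_V)$ and $g\mapsto\mathrm{diag}(\rho_1(g),\rho_2(g))$ in $\mathrm{GL}(V\oplus V)$ (citing Brylinski--Zhang), together with the vanishing $\delta(n)=0$ on trivial representations. You spell out what the paper leaves to the reference: the explicit rotation homotopy, the stabilization step, and why the sign convention in the $\delta$-construction does not matter.
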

\begin{proof}
	This is an immediate consequence of the fact that the two maps $G \to \text{GL}(V\oplus V)$ defined by 
	\[g\mapsto \begin{pmatrix}\rho_1(g)& 0\\ 0&\rho_2(g)\end{pmatrix}\ \text{and }g\mapsto \begin{pmatrix}(\rho_1\cdot\rho_2)(g)&0\\ 0& \text{I}_V\end{pmatrix}\]
	are $G$-equivariantly homotopic (cf. \cite[Proof of Proposition 3.1]{BZ}) and that $\delta(n)=0$ for $n\in\mathbb{Z}$.
\end{proof}
\begin{proposition}\label{repringreg}
	$\text{dim}_\mathbb{Q}\Omega^1_{R(G; \mathbb{Q})/\mathbb{Q}}\otimes_{R(G; \mathbb{Q})}\mathbb{Q}=\text{rank }G$
\end{proposition}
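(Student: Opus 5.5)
The plan is to use Remark \ref{deltaredux} to identify $\Omega^1_{R(G;\mathbb{Q})/\mathbb{Q}}\otimes_{R(G;\mathbb{Q})}\mathbb{Q}$ with $\text{Im}(\delta)\otimes\mathbb{Q}$, via $d\rho\otimes 1\mapsto\delta(\rho)$. This reduces the statement to computing $\dim_\mathbb{Q}\text{Im}(\delta)\otimes\mathbb{Q}$. By Theorem \ref{kthrygrp} (\ref{Ho}), $K^*(G;\mathbb{Q})\cong\bigwedge^*_\mathbb{Q}(\text{Im}(\delta)\otimes\mathbb{Q})$, so if $\text{Im}(\delta)\otimes\mathbb{Q}$ has dimension $m$, then $\dim_\mathbb{Q}K^*(G;\mathbb{Q})=2^m$.

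Next I compute the left side via the Chern character. It gives an isomorphism $K^*(G;\mathbb{Q})\cong H^*(G;\mathbb{Q})$ of $\mathbb{Z}/2$-graded vector spaces, since $G$ is a finite CW complex. By Hopf's theorem, $H^*(G;\mathbb{Q})$ is an exterior algebra on $\text{rank }G$ odd-degree primitive generators, so its total dimension is $2^{\text{rank }G}$. Comparing the two counts gives $2^m=2^{\text{rank }G}$, hence $m=\text{rank }G$.

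The one step needing care is that the identification in Remark \ref{deltaredux} is a genuine isomorphism, not just a surjection. Surjectivity is clear: every element of $\text{Im}(\delta)$ is $\delta(\rho)$ for some $\rho$, and $\Omega^1\otimes\mathbb{Q}$ is spanned by the $d\rho\otimes 1$. For well-definedness, the relations imposed on $\Omega^1\otimes_{R(G)}\mathbb{Q}$, where the base-change map sends $\rho\mapsto\dim\rho$, are exactly $d(\rho_1\rho_2)=\dim(\rho_1)\,d\rho_2+\dim(\rho_2)\,d\rho_1$ and $dn=0$. These are matched by the derivation property of $\delta$ in Theorem \ref{kthrygrp} (\ref{Ho}).

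Injectivity is the main obstacle, and I see two routes.

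\emph{Route via Theorem \ref{kthrygrp} (\ref{BZ}).} Take $K$ to be the trivial group. Then $\Omega^*_{R(G)/\mathbb{Z}}\otimes_{R(G)}\mathbb{Z}\to K^*(G)$ is injective, and in degree one it is precisely $d\rho\otimes 1\mapsto\delta(\rho)$. Tensoring with $\mathbb{Q}$ preserves injectivity by flatness, which settles injectivity.

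\emph{Alternative route via a dimension bound.} Note that $\Omega^1\otimes\mathbb{Q}$ is the cotangent space $\mathfrak{m}/\mathfrak{m}^2$, where $\mathfrak{m}=I(G;\mathbb{Q})$. Its dimension is at most $\text{rank }G$: pass to a finite cover $\widetilde G$ with $R(\widetilde G;\mathbb{Q})$ a Laurent polynomial ring, over which $R(G;\mathbb{Q})$ is finite. This upper bound, combined with the surjection onto a space of dimension $\text{rank }G$, also forces equality.

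I would present the first route, and mention the second as a check.
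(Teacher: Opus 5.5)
Your main argument is correct and is essentially the paper's proof. Remark \ref{deltaredux} identifies the left side with $\text{Im}(\delta)\otimes\mathbb{Q}$, and Theorem \ref{kthrygrp} (\ref{Ho}) together with $\dim K^*(G;\mathbb{Q})=\dim H^*(G;\mathbb{Q})=2^{\text{rank }G}$ gives the count. Your proof of injectivity, via Theorem \ref{kthrygrp} (\ref{BZ}) with $K$ trivial plus flatness of $\mathbb{Q}$, soundly fills in a step the paper only asserts.

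Drop the second route, however. Finiteness of $R(\widetilde G;\mathbb{Q})$ over $R(G;\mathbb{Q})$ does not bound the cotangent space: for $\mathbb{Q}[t^2,t^3]\subset\mathbb{Q}[t]$ it is $2$-dimensional at the origin. Moreover, the bound $\dim_\mathbb{Q} I(G;\mathbb{Q})/I(G;\mathbb{Q})^2\leq\text{rank }G$ is exactly the regularity the paper derives from this very proposition (Corollary \ref{regrep} and the remark after it). That route would therefore need separate input, e.g.\ that translation by the finite central kernel of $\widetilde G\to G$ is free at the identity class.
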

\begin{proof}
	The LHS is isomorphic to $\text{Im}(\delta)\otimes\mathbb{Q}$, which is the primitive space of $K^*(G; \mathbb{Q})$ by Theorem \ref{kthrygrp} (\ref{Ho}). Note that $\text{dim }K^*(G; \mathbb{Q})=\text{dim }H^*(G; \mathbb{Q})=2^{\text{rank }G}$ (for the second equality, see, for example, \cite[Theorem 3.6]{F3}), so the dimension of the primitive space is $\text{rank }G$. 
\end{proof}
\begin{proposition}\label{rkkahler}
	$\text{rank}_{R(K; \mathbb{Q})}\Omega^1_{R(G; \mathbb{Q})/\mathbb{Q}}\otimes_{R(G; \mathbb{Q})}R(K; \mathbb{Q})=\text{rank }G$.
\end{proposition}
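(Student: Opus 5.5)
Since $R(K;\mathbb{Q})$ is an integral domain (for $K$ connected), its rank as a module is the dimension over $F:=\mathrm{Frac}\,R(K;\mathbb{Q})$ of the tensored module. The plan is to compute this dimension after tensoring with $F$, which base changes further along $R(G;\mathbb{Q})\to F$:
\[
\Omega^1_{R(G;\mathbb{Q})/\mathbb{Q}}\otimes_{R(G;\mathbb{Q})}F .
\]
We would rather work at the generic point of $\mathrm{Spec}\,R(G;\mathbb{Q})$, so we first reduce to $K=T$, a maximal torus of $G$, and then to the fraction field of $R(G;\mathbb{Q})$.

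Set $R_G=R(G;\mathbb{Q})$, $E=\mathrm{Frac}(R_G)$ and $r=\mathrm{rank}\,G$. The key fact is that $R_G=R(T;\mathbb{Q})^W$, with $R(T;\mathbb{Q})=\mathbb{Q}[\chi(T)]$ a Laurent polynomial ring in $r$ variables. Hence $E\subset\mathrm{Frac}\,R(T;\mathbb{Q})=:E'$ is a finite Galois extension with group $W$. In characteristic zero $E$ is then a finitely generated field of transcendence degree $r$ over $\mathbb{Q}$, and is separably generated. Therefore
\[
\dim_E\Omega^1_{E/\mathbb{Q}}=r .
\]
Kähler differentials commute with localization, so $\Omega^1_{R_G/\mathbb{Q}}\otimes_{R_G}E\cong\Omega^1_{E/\mathbb{Q}}$. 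Consequently $\Omega^1_{R_G/\mathbb{Q}}$ has rank $r$ over $R_G$ at the generic point.

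The genuine issue is that $R_G\to R(K;\mathbb{Q})$ is not injective when $K$ is smaller than $G$. The restriction map sends the generic point of $\mathrm{Spec}\,R(K;\mathbb{Q})$ to a non-generic prime $\mathfrak p=\ker(R_G\to F)$, so the generic rank does not directly give the rank after base change. We want
\[
\dim_{\kappa(\mathfrak p)}\Omega^1_{R_G}\otimes\kappa(\mathfrak p)=r .
\]
This is where the main obstacle lies: fibre dimension can jump at special primes. Upper semicontinuity only gives $\ge r$. The result needed is that $\Omega^1_{R_G/\mathbb{Q}}$ is locally free of rank $r$, or at least that its fibre dimension is $r$ at every prime.

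For the upper bound, Proposition \ref{repringreg} shows that the fibre dimension at the augmentation ideal equals $r$. The plan is to propagate this to all points by translation. Multiplication on $G$ makes $\mathrm{Spec}\,R_G$ a commutative group-like object: the complexified $\mathrm{Spec}\,R(G;\mathbb{C})=T_\mathbb{C}/W$ carries actions by the centre. That translation argument is awkward, so we prefer to reduce instead to the representation ring of the Lie algebra of $G$ via a finite cover.

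Concretely, we would take a finite cover $\widetilde G\to G$ with $\pi_1(\widetilde G)$ torsion-free, whose existence is noted in the introduction. Then $R(\widetilde G;\mathbb{Q})$ is a (Laurent) polynomial ring, so its $\Omega^1$ is free of rank $r$. The ring $R(\widetilde G;\mathbb{Q})$ is integral over $R_G$, and the extension is étale away from the branch locus. To avoid the branch locus altogether, we would instead compute the rank via Theorem \ref{kthrygrp}(\ref{BZ}). The map
\[
\Omega^*_{R(G)}\otimes_{R(G)}R(K)\to K_K^*(G)
\]
is injective. By the Segal localization / Künneth-type results of the cited works, $K_K^*(G)\otimes F$ has dimension $2^{r}$ over $F$. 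Injectivity after tensoring with $F$, which is flat over $R(K;\mathbb{Q})$, gives
\[
\dim_F\Big(\textstyle\bigwedge^*\Omega^1\otimes F\Big)\le 2^r,
\]
and hence $\dim_F\Omega^1\otimes F\le r$.

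For the reverse inequality we use the $r$ differentials $d\rho_1,\dots,d\rho_r$ of fundamental representations. We would show they are $F$-independent by evaluating their images $\delta^G_K(\rho_i)$ in $K_K^*(G)\otimes F$. Restricting these to the identity fibre, $K^*(G;\mathbb{Q})$, recovers independent primitives by Theorem \ref{kthrygrp}(\ref{Ho}). This step is the delicate one, since it requires passing from a specialization back to the generic fibre. Combining the two inequalities yields exactly $r$.
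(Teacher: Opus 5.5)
The gap is in your upper bound. The missing idea is a one-line inclusion that lets the semicontinuity you already invoked finish the proof.

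\textbf{What is already fine.} Your generic-rank computation is correct; the transcendence-degree argument is a self-contained replacement for the lemmas the paper cites. You also already have the lower bound. Semicontinuity gives $\dim_{\kappa(\mathfrak p)}\Omega^1\otimes\kappa(\mathfrak p)\ge r$ at $\mathfrak p=\ker(i^*)$. Since $\kappa(\mathfrak p)=\mathrm{Frac}(i^*R(G;\mathbb{Q}))\subseteq F$, this is the same as the dimension over $F$.

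\textbf{The missing observation.} We have $\ker(i^*)\subseteq I(G;\mathbb{Q})$, because a virtual representation restricting to $0$ on $K$ has virtual dimension $0$. So the augmentation point lies in the closure of $\mathfrak p$. The same semicontinuity then gives $\dim\Omega^1\otimes\kappa(\mathfrak p)\le\dim_{\mathbb{Q}}\Omega^1\otimes_{R(G;\mathbb{Q})}\mathbb{Q}=r$, by Proposition~\ref{repringreg}. This sandwich (generic rank $\le$ fibre at $\mathfrak p$ $\le$ fibre at $I(G;\mathbb{Q})=r$) is the paper's entire proof. You never need fibre dimension $r$ at every prime, translations, finite covers, or $K_K^*(G)$.

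\textbf{Your substitute upper bound does not close as written.} The bound $\dim_F\Omega^1\otimes F\le r$ from the injectivity in Theorem~\ref{kthrygrp} (\ref{BZ}) hinges on $\dim_F K_K^*(G;\mathbb{Q})\otimes F=2^r$, which you only assert. Proving it needs Segal localization to the fixed set $Z_G(S)$ of a maximal torus $S\subseteq K$, which is connected of rank $r$. You would then need either descent from $S$ to $K$, or to replace $R(K;\mathbb{Q})$ by $R(S;\mathbb{Q})$ from the outset. That is doable, but far heavier than the argument above.

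\textbf{Your last paragraph is redundant and also wrong as stated.}
\begin{itemize}
\item There is no map $K_K^*(G)\otimes F\to K^*(G;\mathbb{Q})$. The forgetful map is specialization at the augmentation ideal of $R(K;\mathbb{Q})$, whose nonzero elements act by $0$ and cannot be inverted.
\item Independence in a special fibre does not give independence in the generic fibre without a freeness statement. In $(x,y)\subset\mathbb{Q}[x,y]$, the elements $x,y$ are independent modulo $(x,y)^2$ but dependent over $\mathbb{Q}(x,y)$.
\item For non-simply-connected $G$ there are no $r$ ``fundamental representations.'' You would need $\rho_i$ with $\delta(\rho_i)$ spanning the primitives.
\end{itemize}
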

\begin{proof}
	Let $R:=i^*R(G; \mathbb{Q})$. It suffices to show that $\text{rank}_R\Omega^1_{R(G; \mathbb{Q})/\mathbb{Q}}\otimes_{R(G; \mathbb{Q})}R=\text{rank}G$ because $R$ is a subring of $R(K; \mathbb{Q})$. As $R(K; \mathbb{Q})$ is a subring of the (rationalized) representation ring of a maximal torus of $K$, which is an integral domain, $R$ is also an integral domain. Then $\text{ker}(i^*)$ is a prime ideal of $R(G; \mathbb{Q})$. Moreover, $\text{ker}(i^*)\subseteq I(G; \mathbb{Q})$. For simplicity we denote $\text{ker}(i^*)$ by $\mathfrak{p}$. By \cite[Lemmas 7.12(2) and 7.13]{CFok}, we have
	\[\text{rank}_{R(G; \mathbb{Q})}\Omega^1_{R(G; \mathbb{Q})/\mathbb{Q}}=\text{rank}_{R(G; \mathbb{Q})_{\mathfrak{p}}}\Omega^1_{R(G; \mathbb{Q})/\mathbb{Q}}\otimes_{R(G; \mathbb{Q})}R(G; \mathbb{Q})_{\mathfrak{p}}=\text{rank}G.\]
	It follows that 
	\begin{align*}
		&\text{rank}G\\
		=&\text{rank}_{R(G; \mathbb{Q})_{\mathfrak{p}}}\Omega^1_{R(G; \mathbb{Q})/\mathbb{Q}}\otimes_{R(G; \mathbb{Q})}R(G; \mathbb{Q})_{\mathfrak{p}}\\
		\leq &\text{rank}_{R(G; \mathbb{Q})/\mathfrak{p}}\Omega^1_{R(G; \mathbb{Q})/\mathbb{Q}}\otimes_{R(G; \mathbb{Q})}(R(G; \mathbb{Q})_{\mathfrak{p}}/\mathfrak{p}R(G; \mathbb{Q})_{\mathfrak{p}})\ (\text{By Nakayama's Lemma})\\
		=&\text{rank}_{R(G; \mathbb{Q})/\mathfrak{p}}\Omega^1_{R(G; \mathbb{Q})/\mathbb{Q}}\otimes_{R(G; \mathbb{Q})}(R(G; \mathbb{Q})_{I(G; \mathbb{Q})}/\mathfrak{p}R(G; \mathbb{Q})_{I(G; \mathbb{Q})})\\
		&(\text{As }R(G; \mathbb{Q})_{\mathfrak{p}}/\mathfrak{p}R(G; \mathbb{Q})_{\mathfrak{p}}\cong R(G; \mathbb{Q})_{I(G; \mathbb{Q})}/\mathfrak{p}R(G; \mathbb{Q})_{I(G; \mathbb{Q})})\\
		\leq&\text{rank}_{R(G; \mathbb{Q})/I(G; \mathbb{Q})}\Omega^1_{R(G; \mathbb{Q})/\mathbb{Q}}\otimes_{R(G; \mathbb{Q})}(R(G; \mathbb{Q})_{I(G; \mathbb{Q})}/I(G; \mathbb{Q})R(G; \mathbb{Q})_{I(G; \mathbb{Q})})\\
		&(\text{By Nakayama's Lemma})\\
		=&\text{rank}_{\mathbb{Q}}\Omega^1_{R(G; \mathbb{Q})/\mathbb{Q}}\otimes_{R(G; \mathbb{Q})}\mathbb{Q}\\
		=&\text{rank}G\ (\text{By Proposition \ref{repringreg}}).
	\end{align*}
	Thus 
	\begin{align*}
		\text{rank}_R\Omega^1_{R(G; \mathbb{Q})/\mathbb{Q}}\otimes_{R(G; \mathbb{Q})}R&=\text{rank}_{R(G; \mathbb{Q})/\mathfrak{p}}\Omega^1_{R(G; \mathbb{Q})/\mathbb{Q}}\otimes_{R(G; \mathbb{Q})}(R(G; \mathbb{Q})_{\mathfrak{p}}/\mathfrak{p}R(G; \mathbb{Q})_{\mathfrak{p}})\\
		&=\text{rank}G
	\end{align*}
	as desired.
\end{proof}
\begin{definition}(cf. \cite[Definition 6.1]{CFok}).\label{equivsamelson}
	Let $Q$ be the $K$-theoretic Samelson space, i.e., the space of primitive elements of $K^*(G; \mathbb{Q})$ in the image of the pullback map $j^*: K^*(G/K; \mathbb{Q})\to K^*(G; \mathbb{Q})$ induced by the quotient map $j: G\to G/K$. Similarly, let $Q_K$ be the equivariant $K$-theoretic Samelson space, i.e., $\text{Im}(j_K^*)\cap \mathcal{N}_K$, where $j_K^*: K_K^*(G/K; \mathbb{Q})\to K_K^*(G; \mathbb{Q})$ is the pullback map on equivariant $K$-theory, and $\mathcal{N}_K$ the $\mathbb{Q}$-vector subspace of $K_K^*(G; \mathbb{Q})$ spanned by the primitive elements $\{\delta_K^G(\rho)|\rho\in R(G; \mathbb{Q})\}$. Let $\mathcal{M}_K$ be the $\mathbb{Q}$-vector subspace of $K_K^*(G; \mathbb{Q})$ generated by $\{\delta_K^G(\rho)|\rho\in\text{ker}(i^*)\}$, and $\mathcal{L}_K$ the $R(K; \mathbb{Q})$-submodule of $K_K^*(G/K; \mathbb{Q})$ generated by $\{\delta_K^{G/K}(\rho)|\rho\in\text{ker}(i^*)\}$.
\end{definition}
The following result about the structure of the $K$-theory of $G/K$ is deduced from the structure of its cohomology (\cite[pp. 73, 83, 152]{GHV}) and by applying the Chern character isomorphism.
\begin{proposition}\textnormal{(cf. \cite[Theorem 6.2]{CFok}).}\label{kthrystructure}
	We have the ring isomorphism
	\[K^*(G/K; \mathbb{Q})\cong (R(K; \mathbb{Q})/i^*I(G; \mathbb{Q})\oplus\mathfrak{a})\otimes\bigwedge\nolimits^*P.\]
	Here $R(K; \mathbb{Q})/i^*(G; \mathbb{Q})$ is the image of the map $\alpha: R(K; \mathbb{Q})\to K^0(G/K; \mathbb{Q})$ defined by the associated vector bundle $\alpha(\rho):=[G\times_K V]$, $\mathfrak{a}$ is an ideal of the first tensor factor on the RHS. We also have that $j^*$ maps $P$ isomorphically onto $Q$. Moreover, $\text{dim }P=\text{dim }Q\leq \text{rank }G-\text{rank }K$, and equality holds if and only if $G/K$ is formal, if and only if $\mathfrak{a}=0$.
\end{proposition}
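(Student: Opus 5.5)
The plan is to transport the classical description of $H^*(G/K;\mathbb{Q})$ to $K$-theory using the Chern character. By \cite[pp.~73, 83, 152]{GHV} there is a ring isomorphism
\[H^*(G/K;\mathbb{Q})\cong\bigl(\mathrm{Im}(c)\oplus\mathfrak{a}'\bigr)\otimes\bigwedge\nolimits^*\widehat{P}.\]
The three ingredients are as follows.
\begin{enumerate}
\item $c:S(\mathfrak{k}^*)^{W_K}\to H^{\mathrm{even}}(G/K;\mathbb{Q})$ is the characteristic (Chern--Weil) homomorphism. Its image is $S(\mathfrak{k}^*)^{W_K}/\bigl(i^*S^+(\mathfrak{g}^*)^{W_G}\bigr)$.
\item $\mathfrak{a}'$ is an ideal of the first tensor factor.
\item $\widehat{P}$ is a graded subspace mapped isomorphically by $j^*$ onto the Samelson subspace $\widehat{Q}$ of primitives of $H^*(G;\mathbb{Q})$ lying in $\mathrm{Im}(j^*)$.
\end{enumerate}
The same source gives $\dim\widehat{Q}\le\mathrm{rank}\,G-\mathrm{rank}\,K$, with equality iff $(G,K)$ is a Cartan pair. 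This holds iff $G/K$ is formal, iff $\mathfrak{a}'=0$. Since $\mathrm{ch}:K^*(G/K;\mathbb{Q})\to H^{**}(G/K;\mathbb{Q})$ is a $\mathbb{Z}/2$-graded ring isomorphism and $G/K$ is finite-dimensional, it suffices to identify each piece on the cohomology side with the corresponding $K$-theoretic piece. We then set $\mathfrak{a}:=\mathrm{ch}^{-1}(\mathfrak{a}')$ and $P:=\mathrm{ch}^{-1}(\widehat{P})$, possibly after adjusting $\widehat{P}$ by decomposables so that it is realized by honest $K$-classes. The dimension and formality statements then transfer verbatim.

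For the even, characteristic part I compose $\alpha$ with $\mathrm{ch}$. By naturality of Chern--Weil theory, $\mathrm{ch}(G\times_K V)=c(\mathrm{ch}_K(V))$, where $\mathrm{ch}_K(V)\in\widehat{S}(\mathfrak{k}^*)^{W_K}$ is the $W_K$-invariant power series $\mathrm{tr}\,e^{X}$. Because the ideal generated by $i^*S^+(\mathfrak{g}^*)^{W_G}$ has finite codimension, $c$ factors through the completion. The map $R(K;\mathbb{Q})\to\widehat{S}(\mathfrak{k}^*)^{W_K}$ has dense image, being the completion at $I(K;\mathbb{Q})$ (Atiyah--Segal/Chern character). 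Hence $\mathrm{ch}\circ\alpha$ surjects onto $\mathrm{Im}(c)$. Under the same map $i^*I(G;\mathbb{Q})$ goes to elements of $i^*S^+(\mathfrak{g}^*)^{W_G}$ that generate the same ideal after completion. So $\mathrm{Im}(\alpha)$ is $R(K;\mathbb{Q})/i^*I(G;\mathbb{Q})$, localized at the augmentation ideal. Points of $\mathrm{Spec}$ away from the augmentation point are killed because $\alpha$ lands in a finite-dimensional unipotent-type algebra. This identifies $\mathrm{Im}(\alpha)$ with $\mathrm{Im}(c)$ as rings.

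For the odd part I use that $\mathrm{ch}:K^*(G;\mathbb{Q})\to H^{**}(G;\mathbb{Q})$ is an isomorphism of Hopf algebras, so it maps primitives onto primitives. By Theorem~\ref{kthrygrp}(\ref{Ho}) the $K$-primitives are $\mathrm{Im}(\delta)\otimes\mathbb{Q}$. Naturality $\mathrm{ch}\circ j^*=j^*\circ\mathrm{ch}$ then gives $\mathrm{ch}(Q)=\widehat{Q}$ up to the filtration subtlety discussed below. Choosing $P\subset K^*(G/K;\mathbb{Q})$ as a lift of $Q$ via $j^*$, for instance spanned by classes $\delta^{G/K}(\rho)$ where available, yields $j^*:P\xrightarrow{\cong}Q$. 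Multiplicativity of $\mathrm{ch}$ then gives the tensor decomposition.

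The main obstacle is that $\mathrm{ch}$ is not degree-preserving, since $\mathrm{ch}(\delta(\rho))$ is a sum of odd-degree primitives. So $\mathrm{ch}(P)$ need not equal a graded $\widehat{P}$, and the ideal $\mathfrak{a}'$ need not be $\mathrm{ch}$ of something homogeneous. I would handle this with a filtration argument. The primitive subspace of $H^*(G;\mathbb{Q})$ is a direct sum over degrees and the Samelson subspace is graded. Hence $\mathrm{ch}(Q)$ and $\widehat{Q}$ have the same associated graded, and a triangularity argument shows $\mathrm{ch}(Q)=\widehat{Q}$ as ungraded spaces. We only need a ring isomorphism, not a graded one, and lifting $P$ and transporting $\mathfrak{a}$ along $\mathrm{ch}$ is then legitimate.
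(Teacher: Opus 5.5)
Your proposal takes essentially the same route the paper indicates: transport the Greub--Halperin--Vanstone structure theorem for $H^*(G/K;\mathbb{Q})$ through the Chern character isomorphism, which is correct. Two small remarks follow. First, the ``filtration obstacle'' is illusory. Since $\mathrm{ch}$ is a bijective Hopf-algebra map on $K^*(G;\mathbb{Q})$ commuting with $j^*$, you get $\mathrm{ch}(Q)=\widehat{Q}$ on the nose, so $P:=\mathrm{ch}^{-1}(\widehat{P})$ and $\mathfrak{a}:=\mathrm{ch}^{-1}(\mathfrak{a}')$ work with no adjustment and no triangularity argument. Second, your argument gives $\mathrm{Im}(\alpha)$ only as the localization of $R(K;\mathbb{Q})/i^*I(G;\mathbb{Q})$; to get the quotient itself, note that the finite-dimensional ring $R(K;\mathbb{Q})\otimes_{R(G;\mathbb{Q})}\mathbb{Q}$ is already local, because an element of $K$ that is conjugate in $G$ to $1$ is $1$, so there are no other points to be ``killed''.
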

\begin{lemma}\label{key}
	We have $\delta^{G/K}(\text{ker}(i^*))\subseteq P$.
\end{lemma}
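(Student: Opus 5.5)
The plan is to realize $\delta^{G/K}(\rho)$ as a pullback along a map $G/K\to U(N)$. Then I would pass to cohomology via the Chern character and use the description of $P$ coming from the Cartan/GHV model that underlies Proposition \ref{kthrystructure}. The point is that $P$ should be exactly the part of the model receiving the pullbacks of primitive (suspension) classes, so anything pulled back from a primitive class of a unitary group has to land there.

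\emph{Step 1: factor $\delta^{G/K}$ through a map to $U(N)$.} Take $\rho=\rho_1-\rho_2\in\ker(i^*)$. After choosing a $K$-equivariant identification of the underlying spaces of $\rho_1|_K$ and $\rho_2|_K$ (with $V=\mathbb{C}^N$ and unitary structures), the map
\[f_\rho:G/K\to U(N),\qquad gK\mapsto \rho_1(g)\rho_2(g)^{-1}\]
is well defined, since $\rho_1(k)=\rho_2(k)$ for $k\in K$. Comparing with Definition \ref{deltaconstruction}, the complex defining $\delta^{G/K}(\rho)$ is exactly the pullback under $f_\rho$ of the complex defining $\delta(\mathrm{id}_{U(N)})$. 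Hence
\[\delta^{G/K}(\rho)=f_\rho^*\,x,\qquad x:=\delta(\mathrm{id})\in K^{-1}(U(N)),\]
and $x$ is a primitive generator by Theorem \ref{kthrygrp}(\ref{Ho}).

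\emph{Step 2: check that the class lands in $Q$ after pulling back to $G$.} Composing with $j$ gives $f_\rho\circ j=\rho_1\cdot\rho_2^{-1}$, a pointwise product in the sense of Proposition \ref{log}. That proposition gives
\[j^*\delta^{G/K}(\rho)=\delta(\rho_1)+\delta(\rho_2^{-1})=\delta(\rho_1)-\delta(\rho_2).\]
This is primitive in $K^*(G;\mathbb{Q})$, so it lies in $Q$. Since $j^*|_P:P\to Q$ is an isomorphism, there is a unique $p\in P$ with $j^*p=j^*\delta^{G/K}(\rho)$.

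\emph{Step 3: remove the ambiguity $\ker j^*$ (the main obstacle).} We must still show $\delta^{G/K}(\rho)-p=0$. By Proposition \ref{kthrystructure}, the kernel of $j^*$ restricted to odd degrees is
\[\bigl((R(K;\mathbb{Q})/i^*I(G;\mathbb{Q}))^+\oplus\mathfrak{a}\bigr)\otimes\textstyle\bigwedge^{\mathrm{odd}}P,\]
so this is where the real content lies. My first attempt is to apply the Chern character and use naturality. The class $\mathrm{ch}(x)$ is a sum of primitive classes of $H^*(U(N);\mathbb{Q})$, so $\mathrm{ch}(\delta^{G/K}(\rho))=f_\rho^*\mathrm{ch}(x)$ is a sum of pullbacks of primitive odd generators. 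In the GHV model $(H^*(BK)/\langle i^*H^+(BG)\rangle\oplus\cdots)\otimes\bigwedge P_H$, the pullback of an odd primitive generator of a compact Lie group under a map out of $G/K$ is represented by a transgression-type element. This means it lies in the span of the generators in $P_H$, and not in the decomposable part involving $H^+(BK)$. This is where the identification $P=\mathrm{ch}^{-1}(P_H)$ in \cite[Theorem 6.2]{CFok} enters, and I would prove the claim there by choosing the model compatibly with $f_\rho$.

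If the choice of $P$ in Proposition \ref{kthrystructure} is not rigid enough for this, the fallback is to define $P$ as the span of $\delta^{G/K}(\ker(i^*))$ and verify the claimed properties directly. Steps 1 and 2 show that $j^*$ maps this span onto primitives, with image equal to $\{\delta(\rho_1)-\delta(\rho_2)\}=Q$. Freeness of the exterior factor then follows from comparing dimensions using Proposition \ref{kthrystructure}. In either approach the lemma follows.
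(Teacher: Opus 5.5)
Your Steps 1 and 2 are the paper's whole proof. The paper computes $j^*\delta^{G/K}(\rho_1-\rho_2)=\delta(\rho_1)-\delta(\rho_2)\in Q$ via Proposition \ref{log} and concludes at once that $\delta^{G/K}(\rho_1-\rho_2)\in P$. You are right that this last inference needs justification, since $j^*$ is not injective. In fact the properties listed in Proposition \ref{kthrystructure} do not even pin down $P$. For $S=\{\mathrm{diag}(t,t^{-1},1)\}\subset SU(3)$ one has $R(S;\mathbb{Q})/i^*I(G;\mathbb{Q})\cong\mathbb{Q}[t^{\pm1}]/((t-1)^2)$, and the subspace $\alpha(t)P$ satisfies every listed property. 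Since $\alpha(t)p-p=\alpha(t-1)p\neq 0$, the nonzero class $\delta^{G/S}(\sigma-\bigwedge^2\sigma)$ can lie in at most one of $P$ and $\alpha(t)P$; it is nonzero because its image under $j^*$ is $\delta(\sigma)-\delta(\bigwedge^2\sigma)\neq 0$.

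However, Step 3 does not supply the missing argument. Your main route rests on the principle that pulling back an odd primitive generator along a map out of $G/K$ lands in the span of $P_H$. This is false for general maps. Since $K^{-1}(X)=[X,U]$ and $\mathrm{ch}$ is a rational isomorphism, every homogeneous odd-degree class of $H^*(G/K;\mathbb{Q})$ is a rational multiple of some $g^*x_{2m-1}$ with $g\colon G/K\to U(N)$. That includes nonzero classes in $\ker j^*$, such as the fundamental class of the $7$-manifold $SU(3)/S$ for a circle $S$ (note $H^7(SU(3);\mathbb{Q})=0$). So you must use that $f_\rho$ comes from the homomorphism of pairs $(\rho_1,\rho_2)\colon(G,K)\to(U(N)\times U(N),\Delta U(N))$. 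Even then, the induced map of Cartan models sends the generator $x_{2k-1}\otimes 1-1\otimes x_{2k-1}$ to an element that in general has components in $\text{Sym}^{\geq 1}(\mathfrak{k}^*)^K\otimes V_H$. These come from the decomposable parts of $\rho_1^*c_k-\rho_2^*c_k$ in $\text{Sym}^*(\mathfrak{g}^*)^G$. You give no reason these corrections are cohomologically trivial, nor say how a single $P$ could be chosen compatibly with all $f_\rho$ at once.

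The fallback also fails. Steps 1 and 2 only give $j^*\bigl(\delta^{G/K}(\ker i^*)\bigr)=\delta(\ker i^*)\subseteq Q$. Equality is exactly what Corollary \ref{pequaldel} extracts from isotropy formality, and it fails in general. Take $S=\{\mathrm{diag}(t,t,t^{-2})\}\subset SU(3)$. The space $G/S$ is formal: in the Cartan model $dx_3=-3u^2$ and $dx_5=-2u^3$, so $x_5-\tfrac{2}{3}ux_3$ is a cocycle, and hence $\dim Q=1$. But $\ker i^*$ is generated by $D=x^2y^2-4x^3-4y^3+18xy-27$, whose gradient vanishes at $(\dim\sigma,\dim\bigwedge^2\sigma)=(3,3)$, so the derivation property gives $\delta(\ker i^*)=0$. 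Thus the span of $\delta^{G/K}(\ker i^*)$ cannot play the role of $P$ there. In any case, redefining $P$ proves a different statement from the lemma.

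What the later arguments (Proposition \ref{rankSam} and the proof of Theorem \ref{rep}) really need is that $j^*$ be injective on $\delta^{G/K}(\ker i^*)$. They also need the Nakayama observation that, when $G/K$ is formal, any subspace of $K^{-1}(G/K;\mathbb{Q})$ mapped isomorphically onto $Q$ can replace $P$. Step 3 would have to prove that injectivity, or the containment for the specific $P$ constructed in \cite[Theorem 6.2]{CFok}.
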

\begin{proof}
	Let $x\in\text{ker}(i^*)$ be the formal difference $\rho_1-\rho_2$ of complex $G$-representations restricting to the same $K$-representation. Then $j^*(\delta^{G/K}(x))=\delta(\rho_1\cdot\rho_2^{-1})=\delta(\rho_1)-\delta(\rho_2)\in Q$ (the last equality holds by Proposition \ref{log}) and thus $\delta^{G/K}(x)\in P$. It follows that $\delta^{G/K}(\text{ker}(i^*))\subseteq P$.
\end{proof}
A main ingredient in our representation theoretic proof of Theorem \ref{rep} is the following $K$-theoretic characterization of equivariant formality.
\begin{theorem}\textnormal{(\cite[Theorem 1.3]{F2}).}\label{RKEF} Let $G$ act on a finite CW-complex $X$. Then the following are equivalent. 
\begin{enumerate}
	\item $X$ is an equivariantly formal $G$-space.
	\item The forgetful map $f: K_G^*(X; \mathbb{Q})\to K^*(X; \mathbb{Q})$ is surjective. 
	\item The forgetful map $f$ induces an isomorphism $K_G^*(X; \mathbb{Q})\otimes_{R(G; \mathbb{Q})}\mathbb{Q}\to K^*(X; \mathbb{Q})$.
\end{enumerate}
\end{theorem}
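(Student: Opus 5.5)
The plan is to compare the $K$-theoretic statements with the cohomological one through the Atiyah--Segal completion theorem and the Chern character, and then use the Leray--Hirsch form of equivariant formality. Recall that (1) is equivalent to surjectivity of the fibre restriction $H^*_G(X;\mathbb{Q})\to H^*(X;\mathbb{Q})$. In that case $H^*_G(X;\mathbb{Q})\cong H^*(BG;\mathbb{Q})\otimes H^*(X;\mathbb{Q})$ is a free $H^*(BG;\mathbb{Q})$-module of rank $\dim H^*(X;\mathbb{Q})$.

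First I reduce to completions. The forgetful map $f$ kills $I(G;\mathbb{Q})\cdot K^*_G(X;\mathbb{Q})$. Since $X$ is a finite $G$-CW complex, $K^*_G(X;\mathbb{Q})$ is a finitely generated $R(G;\mathbb{Q})$-module. Hence
\[K^*_G(X;\mathbb{Q})\otimes_{R(G;\mathbb{Q})}\mathbb{Q}\cong \widehat{K^*_G(X;\mathbb{Q})}\otimes_{\widehat{R(G;\mathbb{Q})}}\mathbb{Q},\]
where the hat denotes $I$-adic completion, and $f$ factors through the completion. By the Atiyah--Segal completion theorem, $\widehat{K^*_G(X)}\cong K^*(X\times_G EG)$. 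Under this identification $f$ becomes restriction to the fibre $X\hookrightarrow X_G$. So (2) becomes surjectivity of $K^*(X_G)\otimes\mathbb{Q}\to K^*(X;\mathbb{Q})$, with the appropriate completed rationalization.

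Next I apply the Chern character. On $X_G$ it gives an isomorphism from the rationalized completed $K$-theory onto $\prod_k H^{2k+*}(X_G;\mathbb{Q})$. Here $H^{\mathrm{odd}}(BG;\mathbb{Q})=0$ and the Milnor $\lim^1$ terms vanish because all the groups are finite dimensional over $\mathbb{Q}$ in each degree. The Chern character commutes with restriction to the fibre, and on $X$ it is the isomorphism $K^*(X;\mathbb{Q})\cong H^{\mathrm{ev/odd}}(X;\mathbb{Q})$.

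Surjectivity of $\prod_k H^{*}(X_G)\to H^*(X)$ is equivalent to surjectivity of the graded map $H^*_G(X)\to H^*(X)$. One direction is a leading-term argument using the degree filtration: a cohomology class of $X$ of top degree $d$ lifts if and only if its degree-$d$ component lifts, and one inducts downward. This gives (1)$\Leftrightarrow$(2).

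For (2)$\Rightarrow$(3) I use a rank count. Under (1), Leray--Hirsch transported through the Chern character shows that $\widehat{K^*_G(X;\mathbb{Q})}$ is free over $\widehat{R(G;\mathbb{Q})}$ of rank $\dim K^*(X;\mathbb{Q})$, generated by lifts of a basis of $K^*(X;\mathbb{Q})$. Tensoring down to $\mathbb{Q}$ therefore gives a space of dimension $\dim K^*(X;\mathbb{Q})$ that surjects onto $K^*(X;\mathbb{Q})$, so the induced map is an isomorphism. The implication (3)$\Rightarrow$(2) is immediate.

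The main obstacle is the careful handling of rationalization versus completion. Completion does not commute with $\otimes\mathbb{Q}$ naively, and the Chern character on the infinite complex $X_G$ only sees the completed, rationalized theory. If this becomes delicate, I would first restrict to a maximal torus $T$, using $K_G^*(X)\cong K_T^*(X)^W$ rationally. For tori everything is explicit via Laurent polynomial rings, and the comparison of the $I$-adic filtration with the cohomological degree filtration is transparent.
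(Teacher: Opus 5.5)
The paper gives no proof of this statement; it is quoted from \cite[Theorem 1.3]{F2}, so your argument has to stand on its own. Its outline is the natural one, and several pieces are fine as written:
(1) is equivalent to surjectivity of $H^*_G(X;\mathbb{Q})\to H^*(X;\mathbb{Q})$ for connected $G$; $f$ factors through $M/IM\cong\widehat M/\widehat I\widehat M$ for the finitely generated module $M=K^*_G(X;\mathbb{Q})$ over the Noetherian ring $R(G;\mathbb{Q})$; and your rank count for (2)$\Rightarrow$(3), which you phrase for $\widehat{K^*_G(X;\mathbb{Q})}$ over $\widehat{R(G;\mathbb{Q})}$, is the right argument.

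The step that (1)$\Leftrightarrow$(2) rests on, however, fails as you state it. The Chern character $K^*(X_G)\otimes\mathbb{Q}\to\prod_k H^{2k+*}(X_G;\mathbb{Q})$ is not surjective. Already for $X=\mathrm{pt}$ and $G=S^1$, one has $K^0(BS^1)\otimes\mathbb{Q}=\mathbb{Z}[[x]]\otimes\mathbb{Q}$ with $x=L-1$, i.e.\ power series with bounded denominators. But $c_1=\mathrm{ch}(\log(1+x))$, and $\log(1+x)=\sum_{n\geq 1}(-1)^{n+1}x^n/n$ has unbounded denominators, so even $c_1$ has no preimage. Hence in (1)$\Rightarrow$(2), a lift of $\mathrm{ch}(y)$ to $\prod_k H^{2k+*}_G(X;\mathbb{Q})$ need not come from $K^*(X_G)\otimes\mathbb{Q}$, and the argument stalls exactly at the point you flagged. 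Passing to a maximal torus does not help, since the same phenomenon occurs for $BT$.

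The repair is to never route through $K^*(X_G)\otimes\mathbb{Q}$. Keep the object from your first step: the $I(G;\mathbb{Q})$-adic completion $\widehat M$ of $M$ over $R(G;\mathbb{Q})$, whose image in $K^*(X;\mathbb{Q})$ equals that of $f$. The correct comparison is the rational completion theorem. The Chern character (defined through $X_G$) sends $I^mM$ into classes of degree at least $2m$, and it induces an isomorphism $\widehat M\cong\prod_k H^{2k+*}_G(X;\mathbb{Q})$ of modules over $\widehat{R(G;\mathbb{Q})}\cong\prod_k H^{2k}(BG;\mathbb{Q})$.

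For $X$ a point, this is the isomorphism the paper takes from \cite[Theorem 5.3]{CFok}. For a finite $G$-CW complex, it follows by cell induction from the orbit case $\widehat{R(H;\mathbb{Q})}\cong\prod_k H^{2k}(BH;\mathbb{Q})$; here the $I(G)$- and $I(H)$-adic topologies on $R(H;\mathbb{Q})$ agree. The induction uses exactness of completion on finitely generated modules over a Noetherian ring, together with the five lemma. With this substituted, both your equivalence (1)$\Leftrightarrow$(2) and your Leray--Hirsch transport for (2)$\Rightarrow$(3) go through verbatim, and the integral Atiyah--Segal theorem is no longer needed.

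Finally, the leading-term induction is superfluous. Restriction to the fibre preserves cohomological degree, so the map out of $\prod_k H^{2k+*}_G(X;\mathbb{Q})$ is surjective if and only if it is surjective in each degree.
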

Later on we will, by abuse of notation, always use $f$ to denote the forgetful map for any topological spaces $X$. 
\section{Proof of Theorems \ref{rep} and \ref{circle}} 
\begin{lemma}\label{eqSam}
	The equivariant $K$-theoretic Samelson subspace $Q_K$ is the $\mathbb{Q}$-vector subspace $\mathcal{M}_K$ of $K_K^*(G; \mathbb{Q})$.
\end{lemma}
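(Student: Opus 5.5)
We prove the two inclusions $\mathcal{M}_K\subseteq Q_K$ and $Q_K\subseteq\mathcal{M}_K$ separately. The second reduces to the fact that a rational virtual representation of $K$ with vanishing Kähler differential is a constant.

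\emph{Inclusion $\mathcal{M}_K\subseteq Q_K$.} By definition $\mathcal{M}_K\subseteq\mathcal{N}_K$, so we only need $\mathcal{M}_K\subseteq\text{Im}(j_K^*)$. Take $\rho=\rho_1-\rho_2\in\text{ker}(i^*)$. Pulling back the complex defining $\delta_K^{G/K}(\rho)$ along $j$ gives the complex on $G\times\mathbb{R}\times V$ built from the pointwise product $g\mapsto\rho_1(g)\rho_2(g)^{-1}$. The $K$-action is $k\cdot(g,t,v)=(kg,t,\rho_1(k)v)$; after the obvious equivariant change of trivialization this becomes the conjugation action used for $\delta_K^G$. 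The equivariant version of Proposition \ref{log} holds, since the homotopy in its proof is $G$-equivariant and hence $K$-equivariant. It gives
\[j_K^*\delta_K^{G/K}(\rho)=\delta_K^G(\rho_1\cdot\rho_2^{-1})=\delta_K^G(\rho_1)-\delta_K^G(\rho_2)=\delta_K^G(\rho).\]
This is the equivariant analogue of the argument in Lemma \ref{key}. Hence every generator of $\mathcal{M}_K$ lies in $\text{Im}(j_K^*)$.

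\emph{Inclusion $Q_K\subseteq\mathcal{M}_K$.} Let $x\in Q_K$. By $\mathbb{Q}$-linearity of $\delta_K^G$ we can write $x=\delta_K^G(\rho)$ for a single $\rho\in R(G;\mathbb{Q})$, and $x=j_K^*(y)$ for some $y\in K_K^*(G/K;\mathbb{Q})$. Since $x$ is odd, we may replace $y$ by its odd component. Now restrict along the $K$-equivariant inclusion $\iota:K\hookrightarrow G$, with $K$ acting on itself by conjugation.
\begin{itemize}
\item The composite $j\circ\iota:K\to G/K$ is the constant map to the $K$-fixed point $eK$. Therefore $\iota^*j_K^*y$ is pulled back from $K_K^{-1}(\text{pt};\mathbb{Q})=0$, so it vanishes.
\item On the other hand, $\iota^*\delta_K^G(\rho)=\delta_K^K(i^*\rho)$ directly from the definition.
\end{itemize}
Hence $\delta_K^K(i^*\rho)=0$. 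By the injectivity in Theorem \ref{kthrygrp}(\ref{BZ}), applied to the group $K$ and rationalized, this gives $d(i^*\rho)=0$ in $\Omega^1_{R(K;\mathbb{Q})/\mathbb{Q}}$.

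\emph{Main obstacle: $d(i^*\rho)=0$ forces $i^*\rho=c\in\mathbb{Q}$.} We embed $R(K;\mathbb{Q})$ into the Laurent polynomial ring $R(T_K;\mathbb{Q})$ of a maximal torus $T_K$ of $K$. The partial derivatives $\partial/\partial t_i$ of $R(T_K;\mathbb{Q})$ restrict to $\mathbb{Q}$-derivations $R(K;\mathbb{Q})\to R(T_K;\mathbb{Q})$. Each of them factors through the universal derivation $d$, so all of them kill $i^*\rho$. In characteristic $0$ a Laurent polynomial with all partial derivatives zero is constant, so $i^*\rho=c\in\mathbb{Q}$.

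\emph{Conclusion.} Since $\delta_K^G$ kills constants (by the same reason $\delta(n)=0$ in Proposition \ref{log}), we get $x=\delta_K^G(\rho-c)$ with $\rho-c\in\text{ker}(i^*)$. Thus $x\in\mathcal{M}_K$, which completes the proof.
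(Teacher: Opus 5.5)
Your proof is correct. Its skeleton is the same as the paper's: you restrict along the inclusion of a subgroup into $G$, and you note that the composite into $G/K$ is constant at the fixed point $eK$, so the image of $j_K^*$ dies because $K_K^{-1}(\text{pt};\mathbb{Q})=0$. You then deduce that all partial derivatives of the restricted character vanish and subtract the resulting constant.

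The difference is in how you turn vanishing of the $K$-theory class into vanishing of derivatives.
\begin{itemize}
\item The paper first proves the torus case. It restricts $\delta_S^G(\rho)$ to $S$ and uses an explicit formula (a generalization of Lemma 4.19 of [F]), $\sum_i t_i\,\partial_i(\rho|_S)\otimes\delta(t_i)$ in the free module $R(S)\otimes K^*(S)$. It then handles general $K$ in a second step: a commutative square compares $j_K^*$ with $j_S^*$ for a maximal torus $S$ of $K$, and injectivity of $R(K)\to R(S)$ finishes.
\item You restrict directly to $K$ with its conjugation action. You apply the injectivity in Theorem \ref{kthrygrp}(\ref{BZ}) to the pair $(K,K)$ to get $d(i^*\rho)=0$. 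The maximal torus then enters only algebraically, through the partial derivatives, which are derivations factoring through the universal one.
\end{itemize}
Your version is a single step and avoids both the auxiliary square and the unproved explicit formula. The price is that it relies on injectivity of $\Omega^*_{R(K)/\mathbb{Z}}\to K_K^*(K)$ for groups $K$ whose fundamental group may have torsion, which is the less elementary half of that theorem. The paper needs only the torus case, where everything is explicit. If you compose your $\iota^*$ with restriction to $K_{T_K}^*(T_K)$, you get a one-step argument that needs neither.

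There is one cosmetic slip in the first inclusion. The map $j$ is $K$-equivariant for the conjugation action on $G$, since $j(kgk^{-1})=kgK$. So the pulled-back action is $(kgk^{-1},t,\rho_1(k)v)$ from the start, and no change of trivialization is involved; left translation and conjugation are not equivalent $K$-actions on $G$ in any case. Likewise, the rotation homotopy behind Proposition \ref{log} is $K$-equivariant because $\rho_1|_K=\rho_2|_K$, not because it is $G$-equivariant.
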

\begin{proof}
	Let us first prove the case where $K$ is a ($k$-dimensional) torus subgroup $S$. The vector subspace $\mathcal{M}_S$ is in $Q_S$ because $j_S^*(\delta_K^{G/K}(\rho))=\delta_K^G(\rho)$ if $\rho\in\text{ker}(i^*)$. The remaining task is to show that $Q_S\subseteq\mathcal{M}_S$. Let $r_S:S\to G$ and $r_{S/S}: S/S\to G/S$ be inclusion maps, and $k_S:S\to S/S$ be the quotient map. Pick an arbitrary element in $Q_S$. Multiplying an integer if necessary, we may assume that the element is of the form $\delta_S^G(\rho_1-\rho_2)$ where $\rho_1$ and $\rho_2$ are representations of $G$. Let $x\in K^{-1}_S(G/S; \mathbb{Q})$ be a preimage of $\delta_S^G(\rho_1-\rho_2)$ under $j_S^*$. Then we have 
	\begin{align*}
		r_S^*(\delta_S^G(\rho_1-\rho_2))&=r_S^*(j_S^*(x))\\
								&=k_S^*(r_{S/S}^*(x))\ (\text{because }j_S\circ r_S=r_{S/S}\circ k_S)\\
								&=0\ (\text{because }r_{S/S}^*(x)\in K_S^{-1}(S/S; \mathbb{Q})=0).
	\end{align*}
	On the other hand, suppose $\rho_1$ and $\rho_2$ decomposes into the direct sums of irreducible 1-dimensional representations of $S$, $\displaystyle\alpha:=\sum_m t_1^{\alpha_{1m}}t_2^{\alpha_{2m}}\cdots t_k^{\alpha_{km}}$ and $\displaystyle \beta:=\sum_n t_1^{\beta_{1n}}t_2^{\beta_{2n}}\cdots t_k^{\beta_{kn}}$ on restriction to $S$. Then by a generalization of the description of the restriction map $K_T^*(G)\to K_T^*(T)$ with $T$ being a maximal torus of $G$ (cf. \cite[Lemma 4.19]{F}), we have
	\begin{align*}
		&r_S^*(\delta_S^G(\rho_1-\rho_2))\\
		=&\sum_m t_1^{\alpha_{1m}}t_2^{\alpha_{2m}}\cdots t_k^{\alpha_{km}}\otimes\left(\sum_{i=1}^k\alpha_{im}\delta(t_i)\right)-\sum_n t_1^{\beta_{1n}}t_2^{\beta_{2n}}\cdots t_k^{\beta_{kn}}\otimes\left(\sum_{i=1}^n\beta_{in}\delta(t_i)\right)\\
		=&\sum_{i=1}^k t_i\left(\frac{\partial \alpha}{\partial t_i}-\frac{\partial\beta}{\partial t_i}\right)\otimes\delta(t_i)
	\end{align*}
	Thus $\displaystyle\frac{\partial}{\partial t_i}(\alpha-\beta)=0$ for all $1\leq i\leq k$, and $\alpha$ and $\beta$ differ by a trivial representation of dimension say, $\ell$. Then $\delta_S^G(\rho_1-\rho_2)=\delta_S^G(\rho_1-\rho_2-\ell)$ and $\rho_1-\rho_2-\ell\in\text{ker}(i^*)$. This shows $Q_S\subseteq \mathcal{M}_S$.

	As to the general case, we let $S$ be a maximal torus of $K$ and consider the following commutative diagram.
	\begin{eqnarray}
		\xymatrix{K_K^*(G/K; \mathbb{Q})\ar[r]^{j_K^*}\ar[d]_{f_1^*}& K_K^*(G; \mathbb{Q})\ar[d]^{f_2^*}\\ K_S^*(G/S; \mathbb{Q})\ar[r]^{j_S^*}&K_S^*(G; \mathbb{Q})}
	\end{eqnarray}
	Here $f_1^*$ is the composition of the map $K_K^*(G/K; \mathbb{Q})\to K_S^*(G/K; \mathbb{Q})$ induced by $R(K; \mathbb{Q})\to R(S; \mathbb{Q})$ and the map $K_S^*(G/K; \mathbb{Q})\to K_S^*(G/S; \mathbb{Q})$ induced by the projection $G/S\to G/K$, while $f_2^*$ is induced by $R(K; \mathbb{Q})\to R(S; \mathbb{Q})$. Again it is enough to consider $\delta_K^G(\rho_1-\rho_2)\in Q_K$, where $\rho_1$ and $\rho_2$ are $G$-representations, and show that $\rho_1|_K$ and $\rho_2|_K$ differ by a trivial representation. Note that $f_2^*(\delta_K^G(\rho_1-\rho_2))=\delta_S^G(\rho_1-\rho_2)\in Q_S=\mathcal{M}_S$. From the proof of the previous case, we have that $\rho_1|_S-\rho_2|_S$ is a trivial representation of $S$. Since the map $R(K; \mathbb{Q})\to R(S; \mathbb{Q})$ is injective, $\rho_1|_K-\rho_2|_K$ is a trivial representation of $K$. This shows $Q_K\subseteq \mathcal{M}_K$. Together with the easier inclusion $\mathcal{M}_K\subseteq Q_K$, we complete the proof of the proposition. 
\end{proof}
\begin{corollary}\label{pequaldel}
		If $(G, K)$ is isotropy formal, then $P=\delta^{G/K}(\text{ker}(i^*))$.
\end{corollary}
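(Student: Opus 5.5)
We have to show that isotropy formality forces $P=\delta^{G/K}(\ker(i^*))$. Lemma \ref{key} already gives $\delta^{G/K}(\ker(i^*))\subseteq P$, so only the reverse inclusion is needed. Since $j^*$ maps $P$ isomorphically onto $Q$, it suffices to show that every element of $Q$ is of the form $j^*\delta^{G/K}(x)=\delta(\rho_1)-\delta(\rho_2)$ with $x=\rho_1-\rho_2\in\ker(i^*)$. Equivalently, we need $Q\subseteq \delta(\ker(i^*))\otimes\mathbb{Q}$ inside the primitive space $\text{Im}(\delta)\otimes\mathbb{Q}$ of $K^*(G;\mathbb{Q})$.

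The plan is to lift to equivariant $K$-theory via Theorem \ref{RKEF} and then apply Lemma \ref{eqSam}. Take $q\in Q$, say $q=j^*(p)$ with $p\in P$. By isotropy formality and Theorem \ref{RKEF}, the forgetful map $f:K_K^*(G/K;\mathbb{Q})\to K^*(G/K;\mathbb{Q})$ is surjective, so $p=f(\tilde p)$ for some $\tilde p\in K_K^{-1}(G/K;\mathbb{Q})$. Naturality of $f$ with respect to $j$ then gives $q=f(j_K^*\tilde p)$.

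We would like $j_K^*\tilde p$ to lie in $Q_K=\text{Im}(j_K^*)\cap\mathcal{N}_K$. In general it need not be primitive, so $\tilde p$ has to be modified. Here we use Theorem \ref{kthrygrp}(\ref{BZ}). The image of $j_K^*$ sits inside $K_K^*(G;\mathbb{Q})$, whose relevant part is $\Omega^*_{R(G)}\otimes R(K)$. The odd part of this is generated over $R(K;\mathbb{Q})$ by $\delta_K^G(\rho)$, modulo higher exterior powers. We would write $j_K^*\tilde p=\sum_\ell c_\ell\,\delta_K^G(\rho_\ell)+(\text{higher exterior terms})$ with $c_\ell\in R(K;\mathbb{Q})$, and then apply $f$. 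Under $f$, each $c_\ell$ goes to its dimension and each $\delta_K^G(\rho)$ goes to $\delta(\rho)$. Since $q$ is primitive, the higher exterior terms contribute nothing, so $q=\sum_\ell \dim(c_\ell)\,\delta(\rho_\ell)$.

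The main obstacle is to show that the combination $\sum_\ell \dim(c_\ell)\,\delta_K^G(\rho_\ell)$ itself lies in $\text{Im}(j_K^*)$, so that it belongs to $Q_K$. The first thing to try is to use the $R(K;\mathbb{Q})$-module structure together with the augmentation: replace $\tilde p$ by $\tilde p-\sum_\ell (c_\ell-\dim c_\ell)\cdot(\ldots)$ using elements of $I(K)$ times equivariant classes. An alternative is to argue via Theorem \ref{RKEF}(3): the map $K_K^*(G/K)\otimes_{R(K)}\mathbb{Q}\cong K^*(G/K)$ lets us choose $\tilde p$ compatibly, and then we project onto the $\mathcal{N}_K$-component. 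Once $\sum_\ell\dim(c_\ell)\,\delta_K^G(\rho_\ell)\in Q_K$ is established, Lemma \ref{eqSam} gives $Q_K=\mathcal{M}_K$. So this element equals $\delta_K^G(x)$ for some $x\in\ker(i^*)$, and applying $f$ gives $q=\delta(x)=j^*\delta^{G/K}(x)$. Injectivity of $j^*$ on $P$ then yields $p=\delta^{G/K}(x)$, which proves $P\subseteq\delta^{G/K}(\ker(i^*))$.

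A fallback that avoids the lifting subtlety is a dimension count. In the formal case, Proposition \ref{kthrystructure} gives $\dim P=\text{rank }G-\text{rank }K$. Equivariant formality would make $K_K^*(G/K;\mathbb{Q})$ free over $R(K;\mathbb{Q})$ with $f$ inducing an isomorphism after tensoring with $\mathbb{Q}$. One would then check that the odd generators can be taken to be $\delta_K^{G/K}(\ker(i^*))$, so that their images span a space of the right dimension inside $P$.
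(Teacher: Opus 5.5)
Your reduction and route are the same as the paper's. You lift through the surjective forgetful map of Theorem \ref{RKEF}, pass the lift through the commutative square relating $j_K^*$ and $j^*$, identify $Q_K=\mathcal{M}_K$ by Lemma \ref{eqSam}, and descend using injectivity of $j^*$ on $P$. The paper goes from surjectivity of the left-hand $f$ plus commutativity to $f(Q_K)=Q$ in one sentence. You are right that this does not follow from a diagram chase alone. The chase gives only $f(Q_K)\subseteq Q$, together with the fact that each $q\in Q$ equals $f(y)$ for \emph{some} $y\in\text{Im}(j_K^*)$. A priori $f(\text{Im}(j_K^*)\cap\mathcal{N}_K)$ can be smaller than $f(\text{Im}(j_K^*))\cap f(\mathcal{N}_K)$. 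This step is in fact the whole content of the corollary. We have $f(Q_K)=f(\mathcal{M}_K)=\delta(\ker(i^*))=j^*\delta^{G/K}(\ker(i^*))$ and $Q=j^*(P)$. By Lemma \ref{key} and injectivity of $j^*|_P$, the equality $f(Q_K)=Q$ is therefore equivalent to $P=\delta^{G/K}(\ker(i^*))$. Since $G/K$ is formal by \cite[Theorem A]{CFok}, the count in the proof of Theorem \ref{rep} makes this in turn equivalent to the forward implication of Theorem \ref{rep}.

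Your proposal does not supply this step, so as it stands it has a genuine gap.
\begin{itemize}
\item The correction $\tilde p-\sum_\ell(c_\ell-\dim c_\ell)\cdot(\ldots)$ is left blank. The $R(K;\mathbb{Q})$-module structure would only help if there were classes with $j_K^*$-image $\delta_K^G(\rho_\ell)$. The proof of Lemma \ref{eqSam} shows that $\delta_K^G(\rho)\in\text{Im}(j_K^*)$ only when $i^*\rho$ is trivial. Whether the products $(c_\ell-\dim c_\ell)\,\delta_K^G(\rho_\ell)$, or some other element of $\ker f\cap\text{Im}(j_K^*)$, achieve the correction is exactly what must be proved.
\item Your target $\sum_\ell\dim(c_\ell)\,\delta_K^G(\rho_\ell)$ is not determined by $j_K^*\tilde p$. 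Because $\delta_K^G(\rho\sigma)=i^*\rho\,\delta_K^G(\sigma)+i^*\sigma\,\delta_K^G(\rho)$, different presentations give different elements of $\mathcal{N}_K$ with the same image $q$. By Lemma \ref{eqSam}, saying that \emph{some} presentation lands in $\text{Im}(j_K^*)$ is just $q\in\delta(\ker(i^*))$ again.
\item ``Projecting onto the $\mathcal{N}_K$-component'' is not available. $\mathcal{N}_K$ is only a $\mathbb{Q}$-subspace, not an $R(K;\mathbb{Q})$-submodule, and it has no complement adapted to $\text{Im}(j_K^*)$.
\item The fallback dimension count assumes that the odd free generators can be chosen in $\delta_K^{G/K}(\ker(i^*))$, which is the claim itself.
\item A smaller point: writing $j_K^*\tilde p$ as an $\Omega^1$-term plus higher exterior terms uses surjectivity in Theorem \ref{kthrygrp}(\ref{BZ}). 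That is only known for $\pi_1(G)$ torsion-free, which is the restriction the paper wants to drop.
\end{itemize}
For formal $G/K$, the existence of arbitrary equivariant lifts of all $p\in P$ is already equivalent to isotropy formality. So the missing input is precisely an argument that trades an arbitrary lift $\tilde p$ for one of the form $\delta_K^{G/K}(x)$ with $x\in\ker(i^*)$. Neither your sketch nor the commutativity of the square provides it.
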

\begin{proof}
	Consider the following commutative diagram
	\begin{eqnarray}
		\xymatrix{K_K^*(G/K; \mathbb{Q})\ar[r]^{j_K^*}\ar[d]^f&K_K^*(G; \mathbb{Q})\ar[d]^f\\ K^*(G/K; \mathbb{Q})\ar[r]^{j^*}& K^*(G; \mathbb{Q})}
	\end{eqnarray}
	where both the vertical maps are forgetful maps. By Theorem \ref{RKEF}, the left vertical map is surjective. Together with the commutativity of the diagram, we have $f(Q_K)=Q$. By Lemma \ref{eqSam}, $f(Q_K)=f(\mathcal{M}_K)$, which is $\delta(\text{ker}(i^*))$. As $j^*$ maps $P$ onto $Q$ isomorphically by Proposition \ref{kthrystructure}, we have $P=\delta^{G/K}(\text{ker}(i^*))$ as desired.
\end{proof}
\begin{remark}
The converse of Corollary \ref{pequaldel} is not true. Consider the pair $(SU(6), SU(3)\times SU(3))$ where $SU(3)\times SU(3)$ is embedded into $SU(6)$ as block diagonal matrices. This pair is known to be not formal (cf. \cite[pp. 486--488]{GHV}). So by Proposition \ref{kthrystructure}, $\text{dim }P<\text{rank }SU(6)-\text{rank }SU(3)\times SU(3)=1$ and hence $P=0$. By Lemma \ref{key}, $\delta^{G/K}(\text{ker}(i^*))=0$ as well. However, the pair is not isotropy formal (cf. \cite[Example 3.5]{CFok}).
\end{remark}
\begin{lemma}
	The $R(K; \mathbb{Q})$-module $\mathcal{L}_K$ is of rank $\text{rank}G-\text{rank}K$.
\end{lemma}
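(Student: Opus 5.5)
Since $\mathcal{L}_K$ is generated over $R(K;\mathbb{Q})$ by the classes $\delta_K^{G/K}(\rho)$ with $\rho\in\text{ker}(i^*)$, the plan is to compare it with a module of Kähler differentials whose rank can be computed. On $\text{ker}(i^*)$ the map $\delta_K^{G/K}$ is $\mathbb{Q}$-linear and satisfies the derivation rule of Remark \ref{deltaredux}: for $x,y\in\text{ker}(i^*)$ it gives $\delta_K^{G/K}(xy)=i^*(x)\delta_K^{G/K}(y)+i^*(y)\delta_K^{G/K}(x)=0$, and for a $G$-representation $\rho$ it gives $\delta_K^{G/K}(\rho x)=i^*(\rho)\delta_K^{G/K}(x)$. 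Consequently $\mathcal{L}_K$ is a quotient of the conormal module
\[\mathfrak{p}/\mathfrak{p}^2\otimes_{R}R(K;\mathbb{Q}),\qquad \mathfrak{p}:=\text{ker}(i^*),\ R=i^*R(G;\mathbb{Q}),\]
via $\bar x\otimes r\mapsto r\,\delta_K^{G/K}(x)$. I would obtain the rank of $\mathcal{L}_K$ from an upper bound coming from this surjection and a matching lower bound coming from the image under $j_K^*$.

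\textbf{Upper bound.} The conormal sequence $\mathfrak{p}/\mathfrak{p}^2\to\Omega^1_{R(G;\mathbb{Q})/\mathbb{Q}}\otimes R\to\Omega^1_{R/\mathbb{Q}}\to0$ has, after passing to the fraction field of the domain $R$, an injective first map, since over the generic point both $R(G;\mathbb{Q})$ and $R$ are smooth in characteristic $0$. By Proposition \ref{rkkahler} the middle term has rank $\text{rank}\,G$. The last term has rank equal to the transcendence degree of $R$, which is $\text{rank}\,K$ because $R(K;\mathbb{Q})$ is finite over $R$. The finiteness holds since $R(T_K)$ is integral over $R(T_G)$, where $T_K\subseteq T_G$ are maximal tori. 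Hence $\mathfrak{p}/\mathfrak{p}^2$ has rank $\text{rank}\,G-\text{rank}\,K$ over $R$, and therefore $\text{rank}\,\mathcal{L}_K\le\text{rank}\,G-\text{rank}\,K$.

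\textbf{Lower bound.} Apply $j_K^*$, which is $R(K;\mathbb{Q})$-linear. It sends $\delta_K^{G/K}(x)$ to $\delta_K^G(x)$, so it maps $\mathcal{L}_K$ onto the $R(K;\mathbb{Q})$-span of $\mathcal{M}_K$ inside $K_K^*(G;\mathbb{Q})$. By Theorem \ref{kthrygrp}(\ref{BZ}), the map $\Omega^1_{R(G;\mathbb{Q})/\mathbb{Q}}\otimes R(K;\mathbb{Q})\to K_K^*(G;\mathbb{Q})$ is injective, so this span is isomorphic to the image of $\mathfrak{p}/\mathfrak{p}^2\otimes R(K;\mathbb{Q})$ in $\Omega^1_{R(G;\mathbb{Q})/\mathbb{Q}}\otimes R(K;\mathbb{Q})$. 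By the generic injectivity established above, that image has rank $\text{rank}\,G-\text{rank}\,K$. Since the rank of a module is at least the rank of its image, $\text{rank}\,\mathcal{L}_K\ge\text{rank}\,G-\text{rank}\,K$. Combining this with the upper bound proves the lemma.

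\textbf{Main obstacle.} The hardest step is the generic rank calculation, in particular the injectivity of $\mathfrak{p}/\mathfrak{p}^2\to\Omega^1\otimes R$ after tensoring with the fraction field, and the equality of the rank of $\Omega^1_{R/\mathbb{Q}}$ with $\text{rank}\,K$. I would handle both by localizing at $\mathfrak{p}$. There $R(G;\mathbb{Q})_\mathfrak{p}$ is a regular local ring, since the generic locus of the variety $\text{Spec}\,R(G;\mathbb{Q})$ in characteristic $0$ is smooth. The second fundamental exact sequence is then left exact and split, which gives the rank count; this parallels the use of \cite[Lemmas 7.12, 7.13]{CFok} in the proof of Proposition \ref{rkkahler}. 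A secondary point is passing from $R$ to $R(K;\mathbb{Q})$. This is harmless for ranks because $R(K;\mathbb{Q})$ is a domain that is finite over $R$, so the two fraction fields form a finite extension.
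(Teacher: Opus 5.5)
Your proof is correct and uses the paper's ingredients. These are the conormal sequence for $\mathfrak{p}=\text{ker}(i^*)$, Proposition \ref{rkkahler} for the middle term, $\text{rank }K$ as the dimension of $R$, and the injectivity in Theorem \ref{kthrygrp}(\ref{BZ}). That injectivity identifies the $R(K;\mathbb{Q})$-span of $\delta_K^G(\mathfrak{p})$ with the image of $\mathfrak{p}/\mathfrak{p}^2\otimes_R R(K;\mathbb{Q})$ in $\Omega^1_{R(G;\mathbb{Q})/\mathbb{Q}}\otimes R(K;\mathbb{Q})$.

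You differ in how you tie $\mathcal{L}_K$ to that image. The paper asserts that $\mathcal{L}_K$ is isomorphic to it, which in effect says $j_K^*$ is injective on $\mathcal{L}_K$; this is never justified. It then reads off the rank of the image from right exactness alone. You use only that $j_K^*$ maps $\mathcal{L}_K$ onto the image, which gives the lower bound. You get the matching upper bound by exhibiting $\mathcal{L}_K$ as a quotient of $\mathfrak{p}/\mathfrak{p}^2\otimes_R R(K;\mathbb{Q})$ via $\delta_K^{G/K}(\rho x)=i^*(\rho)\,\delta_K^{G/K}(x)$. This costs you generic left exactness of the conormal sequence, but it closes the paper's gap. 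As a by-product, $j_K^*$ has torsion kernel on $\mathcal{L}_K$. In the lower bound you do not need generic injectivity at all, since exactness already gives the rank of the image.

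Two justifications should be tightened.

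\emph{Regularity at $\mathfrak{p}$.} $\mathfrak{p}$ is not the generic point of $\text{Spec}\,R(G;\mathbb{Q})$ unless $\text{rank }K=\text{rank }G$. So ``the generic locus is smooth'' does not show that $R(G;\mathbb{Q})_\mathfrak{p}$ is regular. You do not need regularity, though. For any local ring $(A,\mathfrak{m},\kappa)$ containing $\mathbb{Q}$, the map $\mathfrak{m}/\mathfrak{m}^2\to\Omega^1_{A/\mathbb{Q}}\otimes_A\kappa$ is injective. The reason is that $\kappa$ is separable, hence formally smooth, over $\mathbb{Q}$. So $A/\mathfrak{m}^2\to\kappa$ splits as a map of $\mathbb{Q}$-algebras, and every $\kappa$-linear functional on $\mathfrak{m}/\mathfrak{m}^2$ extends to a $\mathbb{Q}$-derivation $A\to\kappa$. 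Apply this with $A=R(G;\mathbb{Q})_\mathfrak{p}$. If you want regularity anyway, use the remark after Corollary \ref{regrep}: $R(G;\mathbb{Q})$ is regular at $I(G;\mathbb{Q})\supseteq\mathfrak{p}$, and localizations of regular local rings are regular.

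\emph{Integrality.} Integrality of $R(T_K)$ over $R(T_G)$ alone does not make $R(K;\mathbb{Q})$ integral over $R$. You also need $R(T_G)$ to be integral over $R(G)=R(T_G)^W$, or you can cite \cite[Proposition 3.2]{Se} as the paper does.
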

\begin{proof}
	Consider the conormal bundle sequence 
		\begin{eqnarray}\label{conormseq}\text{ker}(i^*)/(\text{ker}(i^*))^2\longrightarrow \Omega^1_{R(G; \mathbb{Q})}\otimes_{R(G; \mathbb{Q})}R\longrightarrow \Omega^1_{R/\mathbb{Q}}\longrightarrow 0.\end{eqnarray}
	Tensoring with $R(S; \mathbb{Q})$ over $R$, we have
	\begin{eqnarray*}\text{ker}(i^*)/\text{ker}(i^*)^2\otimes_R R(K; \mathbb{Q})\longrightarrow\Omega^1_{R(G; \mathbb{Q})/\mathbb{Q}}\otimes_{R(G; \mathbb{Q})}R(K; \mathbb{Q})\longrightarrow\Omega^1_{R/\mathbb{Q}}\otimes_R R(K; \mathbb{Q})\longrightarrow 0.\end{eqnarray*}
	The image of the first map, which is the $R(K; \mathbb{Q})$-module generated by $\{\delta_K^G(\rho)|\rho\in\text{ker}(i^*)\}$, is isomorphic to $\mathcal{L}_K$. 
	Localizing further to the fraction field of $R(K; \mathbb{Q})$ and invoking exactness, we have
	\[\text{rank}\mathcal{L}_K=\text{rank}_{R(K; \mathbb{Q})}\Omega^1_{R(G; \mathbb{Q})/\mathbb{Q}}\otimes_{R(G; \mathbb{Q})}R(K; \mathbb{Q})-\text{rank}_{R(K; \mathbb{Q})}\Omega^1_{R/\mathbb{Q}}\otimes_RR(K; \mathbb{Q}).\]
	Here $\text{rank}\Omega^1_{R(G; \mathbb{Q})/\mathbb{Q}}\otimes_{R(G; \mathbb{Q})}R(K; \mathbb{Q})=\text{rank}G$ by Proposition \ref{rkkahler}, and $\text{rank}\Omega^1_{R/\mathbb{Q}}\otimes_RR(K; \mathbb{Q})=\text{rank}_R\Omega^1_{R/\mathbb{Q}}=\text{Krull dim} R=\text{Krull dim} R(K; \mathbb{Q})=\text{rank}K$, with the first equality due to $R$ being a subring of $R(K; \mathbb{Q})$, the second one due to \cite[Lemma 7.13]{CFok} and the last one due to the going-up theorem and $R(K; \mathbb{Q})$ being integral over $R$ (cf. \cite[Remark, p.60]{AM} and \cite[Proposition 3.2]{Se}). All in all, $\mathcal{L}_K$ is of rank $\text{rank}G-\text{rank}K$
\end{proof}
	\begin{proposition}\label{rankSam}
		The pair $(G, K)$ is isotropy formal if and only if $\text{dim }\delta^{G/K}(\text{ker}(i^*))=\text{rank }G-\text{rank }K$.
	\end{proposition}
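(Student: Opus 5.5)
The plan is to prove the two directions separately, using Lemma \ref{key}, Corollary \ref{pequaldel}, Proposition \ref{kthrystructure} and the $K$-theoretic criterion Theorem \ref{RKEF}.

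\emph{($\Leftarrow$)} Suppose $\dim\delta^{G/K}(\text{ker}(i^*))=\text{rank }G-\text{rank }K$.
\begin{enumerate}
\item By Lemma \ref{key}, $\delta^{G/K}(\text{ker}(i^*))\subseteq P$.
\item By Proposition \ref{kthrystructure}, $\dim P\leq \text{rank }G-\text{rank }K$.
\item Hence $P=\delta^{G/K}(\text{ker}(i^*))$ and $\dim P=\text{rank }G-\text{rank }K$. The equality case of Proposition \ref{kthrystructure} then says $G/K$ is formal, $\mathfrak{a}=0$, and
\[K^*(G/K;\mathbb{Q})\cong R(K;\mathbb{Q})/i^*I(G;\mathbb{Q})\otimes\bigwedge\nolimits^*P.\]
\item It remains to show that the forgetful map $f\colon K_K^*(G/K;\mathbb{Q})\to K^*(G/K;\mathbb{Q})$ is surjective; Theorem \ref{RKEF} then gives isotropy formality.
\item Since $f$ is a ring homomorphism, it suffices to lift ring generators. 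The first factor is the image of $\alpha$, and each $\alpha(\rho)=[G\times_K V]$ is the image of the $K$-equivariant bundle $G\times_K V$, with $K$ acting by left multiplication on $G$.
\item The generators $\delta^{G/K}(\rho)$ of $P$, with $\rho\in\text{ker}(i^*)$, are the images under $f$ of $\delta_K^{G/K}(\rho)$ from Definition \ref{deltaconstruction}, since the latter is the same complex of bundles with a $K$-action added.
\item So $f$ is surjective.
\end{enumerate}

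\emph{($\Rightarrow$)} Suppose $(G,K)$ is isotropy formal.
\begin{enumerate}
\item Corollary \ref{pequaldel} gives $P=\delta^{G/K}(\text{ker}(i^*))$.
\item So it suffices to show $\dim P=\text{rank }G-\text{rank }K$, i.e.\ by Proposition \ref{kthrystructure} that $G/K$ is formal.
\item This is the main obstacle: deducing ordinary formality of $G/K$ from equivariant formality of the isotropy action.
\end{enumerate}

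My first attempt would be a dimension count. By Theorem \ref{RKEF}(3), $K^*(G/K;\mathbb{Q})\cong K_K^*(G/K;\mathbb{Q})\otimes_{R(K;\mathbb{Q})}\mathbb{Q}$, and the odd part is generated over the image of $\alpha$ by the reductions of odd equivariant classes. The key claim would be that $K_K^{-1}(G/K;\mathbb{Q})$ is generated, as an algebra over the even part, by the $R(K;\mathbb{Q})$-module $\mathcal{L}_K$. By the preceding lemma, $\mathcal{L}_K$ has rank $\text{rank }G-\text{rank }K$.

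If this generation claim holds, the odd generators of $K^*(G/K;\mathbb{Q})$ lie in $f(\mathcal{L}_K)$, which is spanned by $\delta^{G/K}(\text{ker}(i^*))$ modulo decomposables. Comparing with the total dimension, which is $2^{\text{rank }G-\text{rank }K}$ times the even-degree part for an equivariantly formal action of this type, would force $\dim P=\text{rank }G-\text{rank }K$.

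If this bookkeeping resists, I would fall back on the known cohomological fact that isotropy formality of $(G,K)$ implies formality of $G/K$ (cf.\ \cite{ST, Sh, Ca}). Combined with Proposition \ref{kthrystructure} and Corollary \ref{pequaldel}, that fact finishes the proof.
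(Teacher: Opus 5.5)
Your proposal is correct and matches the paper's proof. For ($\Leftarrow$) you use the same sandwich $\text{rank }G-\text{rank }K=\dim\delta^{G/K}(\text{ker}(i^*))\leq\dim P\leq\text{rank }G-\text{rank }K$, lift both tensor factors of $K^*(G/K;\mathbb{Q})$ (via associated bundles and $\delta_K^{G/K}$), and invoke Theorem \ref{RKEF}. For ($\Rightarrow$) the paper takes exactly your fallback, citing \cite[Theorem A]{CFok} for ``isotropy formal implies $G/K$ formal'' and then applying Proposition \ref{kthrystructure} and Corollary \ref{pequaldel}. You can drop the tentative dimension-count route: as sketched, its generation claim and total-dimension formula essentially presuppose the formality you are trying to prove.
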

	\begin{proof}Suppose $(G, K)$ is isotropy formal. Then by \cite[Theorem A]{CFok}, $G/K$ is formal and by Proposition \ref{kthrystructure}, $\text{dim }P=\text{rank }G-\text{rank }K$. By Corollary \ref{pequaldel}, $\text{dim }\delta^{G/K}(\text{ker}(i^*))=\text{rank }G-\text{rank }K$. Conversely, suppose that $\text{dim }\delta^{G/K}(\text{ker}(i^*))=\text{rank }G-\text{rank }K$. Then by Proposition \ref{kthrystructure} and Lemma \ref{key}, 
	\[\text{rank }G-\text{rank }K=\text{dim }\delta^{G/K}(\text{ker}(i^*))\leq \text{dim }P\leq \text{rank }G-\text{rank }K.\]
	Thus $P=\delta^{G/K}(\text{ker}(i^*))$ and $\text{dim }P=\text{rank }G-\text{rank }K$. By Proposition \ref{kthrystructure} again, we have the isomorphism $K^*(G/K; \mathbb{Q})\cong \left(R(K; \mathbb{Q})/i^*I(G; \mathbb{Q})\right)\otimes\bigwedge\nolimits^*P$. The first tensor factor consists of $K$-theory classes represented by associated vector bundles $G\times_K V$, which admits the equivariant structure via the left $K$-action on $G$. The second tensor factor admits equivariant lifts as well by the fact that the $K$-theory forgetful map $K^*_K(G/K; \mathbb{Q})\to K^*(G/K; \mathbb{Q})$ maps $\delta^{G/K}_K(\text{ker}(i^*))$ onto $P$. We have that the forgetful map is onto and $(G, K)$ is indeed isotropy formal by Theorem \ref{RKEF}. 
	\end{proof}
	\begin{proof}[Proof of Theorem \ref{rep}]
	By Proposition \ref{rankSam}, it remains to show that $\text{dim }\delta^{G/K}(\text{ker}(i^*))=\text{rank }G-\text{rank }K$ if and only if $R:=i^*R(G; \mathbb{Q})$ is regular at the ideal $I:=i^*I(G; \mathbb{Q})$. By \cite[Corollary 3.7]{ABC+}, $R$ is regular at $I$ if and only if 
	\[\text{dim}_\mathbb{Q}(R/I)\otimes_R \Omega^1_{R/\mathbb{Q}}=\text{Krull dim }R_I, \text{where }R/I\cong\mathbb{Q}.\]
	Since $R$ is an integral domain finitely generated over $\mathbb{Q}$ and $I$ is a maximal ideal, the Krull dimension of $R_I$ equals the Krull dimension of $R$ (cf. \cite[Corollary 11.27]{AM}), which in turn is also the Krull dimension of $R(K; \mathbb{Q})$ since $R(K; \mathbb{Q})$ is integral over $R$ (cf. \cite[Proposition 3.2]{Se}, \cite[Remark, p.60]{AM}). By \cite[Lemma 7.12]{CFok}, the Krull dimension of $R(K)$ is rank $K$. Now we have shown another
	\begin{claim}
		The ring $R$ is regular at the ideal $I$ if and only if $\text{dim}_\mathbb{Q}(R/I)\otimes_R\Omega^1_{R/\mathbb{Q}}=\text{rank }K$. 
	\end{claim}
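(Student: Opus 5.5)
The Claim is essentially a bookkeeping consequence of the cited regularity criterion together with a dimension count, so the plan is to assemble the three ingredients just stated and check that each applies in our setting. First, I check the hypotheses of \cite[Corollary 3.7]{ABC+}. The ring $R=i^*R(G;\mathbb{Q})$ is a finitely generated $\mathbb{Q}$-algebra, since $R(G;\mathbb{Q})$ is. It is an integral domain, being a subring of $R(K;\mathbb{Q})\subseteq R(S;\mathbb{Q})$ for a maximal torus $S$ of $K$, as in the proof of Proposition \ref{rkkahler}. The ideal $I=i^*I(G;\mathbb{Q})$ is the kernel of the augmentation $R\to\mathbb{Q}$ (restriction followed by taking dimension), so $R/I\cong\mathbb{Q}$ and $I$ is a maximal ideal with residue field $\mathbb{Q}$. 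Since $\mathbb{Q}$ is perfect and $R/I=\mathbb{Q}$ is separable over $\mathbb{Q}$, the Jacobian-type criterion applies. It says that $R_I$ is regular if and only if the fibre dimension $\dim_{\mathbb{Q}}(R/I)\otimes_R\Omega^1_{R/\mathbb{Q}}$ equals $\text{Krull dim }R_I$. Here one uses $\Omega^1_{R/\mathbb{Q}}\otimes_R R/I\cong I/I^2$ for a $\mathbb{Q}$-rational point, so the criterion is just $\dim I/I^2=\dim R_I$.

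Second, I identify $\text{Krull dim }R_I$ with $\text{rank }K$ through a chain of equalities. Since $R$ is a finitely generated domain over a field and $I$ is maximal, all maximal ideals have height $\text{Krull dim }R$ by \cite[Corollary 11.27]{AM}, so $\text{Krull dim }R_I=\text{Krull dim }R$. Next, $R(K;\mathbb{Q})$ is integral over $R$. It is a finite module over $R(G;\mathbb{Q})$ by \cite[Proposition 3.2]{Se}, hence finite over the image $R$. Lying-over and going-up \cite[Remark, p.60]{AM} then give $\text{Krull dim }R=\text{Krull dim }R(K;\mathbb{Q})$. Finally, $\text{Krull dim }R(K;\mathbb{Q})=\text{rank }K$ by \cite[Lemma 7.12]{CFok}. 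Substituting this value into the criterion from the first step yields the Claim.

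The only point needing care is the integrality of $R(K;\mathbb{Q})$ over $R$. Segal's result is stated integrally and for closed subgroups, so I would note that rationalization preserves finiteness of the module and that $K$ is closed here, since it is compact. Everything else is a direct citation, so I expect no real obstacle.
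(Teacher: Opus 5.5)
Your proposal is correct and follows the paper's argument step for step. You apply \cite[Corollary 3.7]{ABC+} at the $\mathbb{Q}$-rational maximal ideal $I$, then use $\text{Krull dim }R_I=\text{Krull dim }R=\text{Krull dim }R(K;\mathbb{Q})=\text{rank }K$ via \cite[Corollary 11.27]{AM}, integrality from \cite[Proposition 3.2]{Se}, and \cite[Lemma 7.12]{CFok}. Your added checks ($R/I\cong\mathbb{Q}$, and $\Omega^1_{R/\mathbb{Q}}\otimes_R R/I\cong I/I^2$) fill in details the paper leaves implicit. One small nit: the equality of Krull dimensions under the integral extension also needs incomparability, not only lying-over and going-up, though this is covered by the same cited remark.
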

	To finish the proof, we shall show that 
	\begin{claim}
		$\text{dim}_\mathbb{Q}(R/I)\otimes_R\Omega^1_{R/\mathbb{Q}}=\text{rank }K$ if and only if $\text{dim }\delta(\text{ker}(i^*))=\text{rank }G-\text{rank }K$.
	\end{claim}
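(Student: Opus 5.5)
The plan is to tensor the conormal sequence (\ref{conormseq}) with $R/I\cong\mathbb{Q}$ over $R$ and compare dimensions. Right exactness of $\otimes$ gives an exact sequence of finite-dimensional $\mathbb{Q}$-vector spaces
\[\text{ker}(i^*)/(\text{ker}(i^*))^2\otimes_R \mathbb{Q}\xrightarrow{\ \partial\ }\Omega^1_{R(G;\mathbb{Q})/\mathbb{Q}}\otimes_{R(G;\mathbb{Q})}\mathbb{Q}\longrightarrow (R/I)\otimes_R\Omega^1_{R/\mathbb{Q}}\longrightarrow 0.\]
Here the tensor with $\mathbb{Q}$ is taken through the augmentation $R(G;\mathbb{Q})\to R\to R/I=\mathbb{Q}$. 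Hence
\[\text{dim}_\mathbb{Q}(R/I)\otimes_R\Omega^1_{R/\mathbb{Q}}=\text{dim}_\mathbb{Q}\Omega^1_{R(G;\mathbb{Q})/\mathbb{Q}}\otimes_{R(G;\mathbb{Q})}\mathbb{Q}-\text{dim}\,\text{Im}(\partial)=\text{rank }G-\text{dim}\,\text{Im}(\partial),\]
using Proposition \ref{repringreg}. So the claim reduces to identifying $\text{dim}\,\text{Im}(\partial)$ with $\text{dim}\,\delta^{G/K}(\text{ker}(i^*))$.

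For this identification, recall that $\partial$ sends the class of $\rho\in\text{ker}(i^*)$ to $d\rho\otimes 1$. Under the isomorphism of Remark \ref{deltaredux}, $d\rho\otimes1\mapsto\delta(\rho)$, so $\text{Im}(\partial)\cong\delta(\text{ker}(i^*))\subseteq K^{-1}(G;\mathbb{Q})$. Next, for $\rho=\rho_1-\rho_2\in\text{ker}(i^*)$, the proof of Lemma \ref{key} gives $j^*\delta^{G/K}(\rho)=\delta(\rho_1\cdot\rho_2^{-1})=\delta(\rho_1)-\delta(\rho_2)=\delta(\rho)$, where the middle step uses Proposition \ref{log}. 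Lemma \ref{key} places $\delta^{G/K}(\text{ker}(i^*))$ inside $P$, and Proposition \ref{kthrystructure} says $j^*|_P$ is injective. Therefore $j^*$ restricts to an isomorphism $\delta^{G/K}(\text{ker}(i^*))\cong\delta(\text{ker}(i^*))$, and so $\text{dim}\,\text{Im}(\partial)=\text{dim}\,\delta^{G/K}(\text{ker}(i^*))=\text{dim}\,\delta(\text{ker}(i^*))$. This also shows that the two readings of "$\delta$" in the claim agree.

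Combining the two steps gives
\[\text{dim}_\mathbb{Q}(R/I)\otimes_R\Omega^1_{R/\mathbb{Q}}=\text{rank }G-\text{dim}\,\delta^{G/K}(\text{ker}(i^*)).\]
The left side equals $\text{rank }K$ exactly when $\text{dim}\,\delta^{G/K}(\text{ker}(i^*))=\text{rank }G-\text{rank }K$, which is the claim. Together with the previous claim and Proposition \ref{rankSam}, this finishes the proof of Theorem \ref{rep}.

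The main obstacle I expect is justifying that the map $\partial$ is really $\rho\mapsto d\rho\otimes 1$ after base change to $\mathbb{Q}$, and that Remark \ref{deltaredux} identifies it compatibly with $\delta$. Concretely, I need to check that $\text{ker}(i^*)$, viewed as the ideal defining $R=R(G;\mathbb{Q})/\text{ker}(i^*)$, maps under $d$ to the right subspace. This holds because $\text{ker}(i^*)\subseteq I(G;\mathbb{Q})$, so the base change of the second fundamental exact sequence is well defined at the point $I$. I also need that $\delta$ is $\mathbb{Q}$-linear on the rationalization, so that $\delta(\text{ker}(i^*))$ is spanned by the $\delta(\rho)$.
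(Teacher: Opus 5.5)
Your proposal is correct and follows the paper's proof essentially verbatim. Like the paper, you tensor the conormal sequence with $R/I\cong\mathbb{Q}$, use Proposition \ref{repringreg} for the middle term, and identify the image of the first map with $\delta(\text{ker}(i^*))$ via Remark \ref{deltaredux} and then with $\delta^{G/K}(\text{ker}(i^*))$ via $j^*$. Your appeal to Lemma \ref{key} for the injectivity of $j^*$ on $\delta^{G/K}(\text{ker}(i^*))$ spells out a step the paper leaves implicit.
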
 
	Note that in the conormal bundle sequence (\ref{conormseq}), the first map sends $\rho+(\text{ker}(i^*))^2$ to $d\rho\otimes 1$. Tensoring the sequence with $\cdot\otimes_R R/I$ yields the following sequence of $\mathbb{Q}$-vector spaces.
	\[(\text{ker}(i^*)/(\text{ker}(i^*)^2))\otimes_R R/I\longrightarrow\Omega^1_{R(G; \mathbb{Q})/\mathbb{Q}}\otimes_{R(G; \mathbb{Q})}\mathbb{Q}\longrightarrow\Omega^1_{R/\mathbb{Q}}\otimes_R R/I\longrightarrow 0.\]
	The image of the first map of the last sequence is isomorphic to $d(\text{ker}(i^*))\otimes_{R(G; \mathbb{Q})}\mathbb{Q}$, which in turn is isomorphic to $\delta(\text{ker}(i^*))$ by Remark \ref{deltaredux} and to $\delta^{G/K}(\text{ker}(i^*))$ by the isomorphism $j^*$ (Proposition \ref{kthrystructure}). By exactness we have 
	\[\text{dim}_\mathbb{Q}\delta^{G/K}(\text{ker}(i^*))+\text{dim}_\mathbb{Q}\Omega^1_{R/\mathbb{Q}}\otimes_R R/I=\text{dim}_\mathbb{Q}\Omega^1_{R(G; \mathbb{Q})/\mathbb{Q}}\otimes_{R(G; \mathbb{Q})}\mathbb{Q}.\]
	By Proposition \ref{repringreg}, the RHS of the above equation is $\text{rank }G$. 
	Now the proof of Theorem \ref{rep} is complete. 
\end{proof}
\begin{corollary}\label{regrep}
	The representation ring $R(G; \mathbb{Q})$ is regular at the augmentation ideal $I(G; \mathbb{Q})$. 
\end{corollary}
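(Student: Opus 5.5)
The plan is to apply Theorem \ref{rep} with $K=G$ and $i$ the identity map. Then $i^*$ is the identity on $R(G;\mathbb{Q})$, so $R=R(G;\mathbb{Q})$ and $I=I(G;\mathbb{Q})$. Theorem \ref{rep} now says that $R(G;\mathbb{Q})$ is regular at $I(G;\mathbb{Q})$ if and only if $(G,G)$ is isotropy formal. The whole argument therefore reduces to checking that the pair $(G,G)$ is isotropy formal.

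Here $G/G$ is a point, and $G$ acts on it trivially. The Borel construction $\text{pt}\times_G EG=BG$ fibres over $BG$ with fibre a point, so the Leray--Serre spectral sequence is concentrated in a single row and collapses at $E_2$. Equivalently, via Theorem \ref{RKEF}, the forgetful map $K_G^*(\text{pt};\mathbb{Q})=R(G;\mathbb{Q})\to K^*(\text{pt};\mathbb{Q})=\mathbb{Q}$ is the augmentation, which is surjective since it sends $1\mapsto 1$. Either way $(G,G)$ is isotropy formal, and the corollary follows.

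I should check that nothing in the proof of Theorem \ref{rep} degenerates when $K=G$. In that case $\ker(i^*)=0$ and $\text{rank }G-\text{rank }K=0$, so the criterion of Proposition \ref{rankSam} reads $\dim 0=0$, which holds trivially. For extra reassurance, the conormal sequence then gives directly $\dim_\mathbb{Q}\Omega^1_{R(G;\mathbb{Q})/\mathbb{Q}}\otimes\mathbb{Q}=\text{rank }G$ (Proposition \ref{repringreg}), and this equals the Krull dimension of $R(G;\mathbb{Q})_{I}$. By \cite[Corollary 3.7]{ABC+} that equality is exactly the regularity criterion. I expect no genuine obstacle; the only point needing care is confirming that the cited results allow the trivial case $K=G$.
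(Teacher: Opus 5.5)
Your proposal is correct and is essentially the paper's own proof: apply Theorem \ref{rep} to the pair $(G,G)$, which is trivially isotropy formal because $G$ acts trivially on the point $G/G$. Your supplementary direct check, combining Proposition \ref{repringreg} with $\text{Krull dim } R(G;\mathbb{Q})_{I(G;\mathbb{Q})}=\text{rank }G$ and the cited regularity criterion, is exactly the remark the paper records immediately after this corollary.
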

\begin{proof}
	We know that $(G, G)$ is always isotropy formal as the $G$-action on $G/G$ is trivial. By Theorem \ref{rep}, $R(G; \mathbb{Q})$ is regular at its augmentation ideal, and so is $R(G)$. 
\end{proof}
\begin{remark}
	In fact, the following string of equalities
	\begin{align*}
		&\text{dim}_\mathbb{Q}\Omega^1_{R(G; \mathbb{Q})/\mathbb{Q}}\otimes_{R(G; \mathbb{Q})}\mathbb{Q}\\
		=&\text{rank}G\ (\text{by Proposition \ref{repringreg}})\\
		=&\text{Krull dim} R(G; \mathbb{Q})\ (\text{by \cite[Lemma 7.12]{CFok}})\\
		=&\text{Krull dim} R(G; \mathbb{Q})_{I(G; \mathbb{Q})}\ (\text{by \cite[Corollary 11.27]{AM}})
	\end{align*} 	
	also imply that $R(G; \mathbb{Q})$ is regular at $I(G; \mathbb{Q})$ by \cite[Corollary 3.7]{ABC+}
\end{remark}
\begin{example}
	We will show by explicit computation that $R(\text{PSU}(3))$ is regular at the augmentation ideal, in accordance with our result and thus correct a claim made in \cite{Ho}. The representation ring $R(\text{PSU}(3))$ is isomorphic to 
	\[\mathbb{Z}[x, y, z]/(y^3-y^2-xz-2y(x+z)-x-y-z), \]
	where $x=[V_{3L_1}]$, $y=[V_{2L_1+L_2}]$ and $z=[V_{3L_1+3L_2}]$. As a check, one may make the linear change of variables $x= X+1-2Y$, $y=Y-1$ and $z=Z+1-2Y$ and obtain the more compact presentation in \cite[Lemma 7.1]{BZ}:
	\[R(\text{PSU}(3))\cong \mathbb{Z}[X, Y, Z]/(Y^3-XZ).\]
	Note that $\text{dim }V_{3L_1}=\text{dim }V_{3L_1+3L_2}=10$ and $\text{dim }V_{2L_1+L_2}=8$. If we let $\overline{x}=x-10$, $\overline{y}=y-8$ and $\overline{z}=z-10$, then 
	\[R(\text{PSU}(3))\cong\mathbb{Z}[\overline{x}, \overline{y}, \overline{z}]/(\overline{y}^3+23\overline{y}^2-2(\overline{x}+\overline{z})\overline{y}-\overline{x}\overline{z}-27(\overline{x}+\overline{z})+135\overline{y}).\]
	It follows that 
	\[I(\text{PSU}(3))/I(\text{PSU}(3))^2=\text{span}_\mathbb{Z}\{\overline{x}, \overline{y}, \overline{z}\}/\text{span}_\mathbb{Z}\{-27(\overline{x}+\overline{z})+135\overline{y}\}\]
	whose rank is 2, which is equal to the the Krull dimension of $R(\text{PSU}(3))$ (or equivalently the rank of $\text{PSU}(3)$ by \cite[Lemma 7.12(ii)]{CFok}). Thus $R(\text{PSU}(3))$ is indeed regular at its augmentation ideal. Moreover $I(\text{PSU}(3))/I(\text{PSU}(3))^2$ is an (non-free) abelian group of rank 2 instead of a free abelian group of rank 3 as claimed after \cite[Proof of Lemma 4.2]{Ho}. 
\end{example}


We are now ready to demonstrate the first application of Theorem \ref{rep} in this paper, which is to show Theorem \ref{circle}, a characterization of the isotropy formality of $(G, S)$ where $S$ is a circle subgroup, in terms of the self-duality of representations of $G$ when restricted to $S$.
\begin{proof}[Proof of Theorem \ref{circle}] Let $\rho_1, \rho_2, \cdots, \rho_m$ be representations which generate $R(G)$, and $\displaystyle i^*\rho_k=\sum_{n}a_{k, n}t^n\in R(S)$, where $t$ is one-dimensional standard representation of $S$. Suppose that any representation of $G$, when restricted to $S$, is self-dual. Then $a_{k, n}=a_{k, -n}$ for all $k$ and $n$. If we denote $t+t^{-1}-2$ by $s$, then $i^*\rho_k$ can be rewritten as a polynomial $p_k(s)$ of $s$. The regularity of $R:=i^*R(G; \mathbb{Q})$ at the augmentation ideal $i^*I(G; \mathbb{Q})=R\cap (t-1)$ amounts to the smoothness of $\text{Spec} R$ at the point $i^*I(G; \mathbb{Q})$. This is also equivalent to the smoothness of the image of the map $i_\mathbb{R}: \text{Spec} R(S; \mathbb{R})=\text{Spec}\mathbb{R}[t, t^{-1}]\to \text{Spec} R(G; \mathbb{R})=\text{Spec}\mathbb{R}[x_1, x_2, \cdots, x_{m}]/\mathcal{I}\subseteq\text{Spec}\mathbb{R}[x_1, x_2, \cdots, x_m]$ at the point $i_\mathbb{R}((t-1))=I(G; \mathbb{R})$ where $\mathcal{I}$ is the kernel of the map $\mathbb{R}[x_1, x_2, \cdots, x_m]\to R(G; \mathbb{R})$ which sends $x_i$ to $\rho_i$. In other words, the curve with the parametrization $s\mapsto (p_1(s), p_2(s), \cdots, p_m(s))\in\mathbb{R}^m$, which is a reparametrization of the curve $\displaystyle t\mapsto \left(\sum_n a_{1, n}t^n, \cdots, \sum_n a_{m, n}t^n\right)$, is smooth at the point $s=0$ (note that $t=1$ if and only if $s=0$). Thus it suffices to show that $(p_1'(0), p_2'(0), \cdots, p_m'(0))\neq 0$. Differentiating the equation $\displaystyle p_k(s)=\sum_n a_{k, n}t^n$ with respect to $t$ and applying chain rule to the LHS, we get $p_k'(0)$ as follows.
\begin{align*}
	\frac{d}{ds}p_k(s)\cdot\frac{ds}{dt}&=\sum_n na_{k, n}t^{n-1}\\
	\frac{d}{ds}p_k(s)\cdot (1-t^{-2})&=\sum_{n>0}na_{k, n}(t^{n-1}-t^{-n-1})\\
	\frac{d}{ds}p_k(s)&=\sum_{n>0}na_{k, n}(t^{n-1}+t^{n-3}+\cdots+t^{1-n})\\
	p_k'(0)&=\sum_{n>0}n^2a_{k, n}
\end{align*}
The RHS of the last line is not zero because $a_{k, n}\geq 0$ but there exists $n=n_0>0$ such that $a_{k, n_0}>0$. This shows that $R$ is regular at $i^*I(G; \mathbb{Q})$. 

Now suppose that there exists a representation of $G$ whose restriction to $S$ is not self-dual. Let $U$ be a set of representations of $G$ which generate $R(G; \mathbb{R})$, $\overline{U}$ the set of dual representations, and $U\cup \overline{U}=\{\rho_1, \rho_2, \cdots, \rho_m\}$. In this way $\{\rho_1, \rho_2, \cdots, \rho_m\}$ generates $R(G; \mathbb{Q})$ and has the nontrivial involution of taking dual (If $G$ is in addition simply-connected, the set $(\rho_1, \rho_2, \cdots, \rho_m)$ can be taken to be the set of fundamental representations as it generates $R(G; \mathbb{Q})$ and is invariant under the involution of taking dual). Let this involution send $\rho_k$ to $\rho_{\sigma(k)}$, where $\sigma$ is a nontrivial involution on $\{1, 2, \cdots, m\}$. We want to show that $R$ is not regular at $i^*I(G; \mathbb{Q})$. We have that $i^*\rho_k=q_k(t)$ for some Laurent polynomial $q_k$. By the same reasoning as in the previous paragraph, it suffices to show that the image curve $t\mapsto (q_1(t), q_2(t), \cdots, q_m(t))\subseteq \mathbb{R}^m$ for $t>0$ is not smooth at the point $t=1$. Suppose for the sake of contradiction that the curve is smooth at $t=1$. We shall first show that the map $t\mapsto (q_1(t), q_2(t), \cdots, q_m(t))$ is injective for $t$ in an open neighborhood of 1. Choose $k$ such that $k\neq \sigma(k)$. It suffices to show that the map $t\mapsto(q_k(t), q_{\sigma(k)}(t))$ is injective in an open neighborhood of 1. As $q_k(t)$ is a nonconstant Laurent polynomial, there exists $\delta_1>0$ such that $q'_k(t)>0$ or $q'_k(t)<0$ for $t\in(1, 1+\delta_1)$. So $q_k$ and hence $t\mapsto(q_k(t), q_{\sigma(k)}(t))$ is injective for $t\in(1, 1+\delta_1)$. Similarly, there exists $\delta_2>0$ such that $q_k(t)-q_{\sigma(k)}(t)>0$ (resp. $q_k(t)-q_{\sigma(k)}(t)<0$) for $t\in(1, 1+\delta_2)$. Take $\delta=\min\{\delta_1, \delta_2\}$. For $\displaystyle t\in\left(\frac{1}{1+\delta}, 1\right)$, $t^{-1}\in(1, 1+\delta)$, and the map $t\mapsto(q_k(t), q_{\sigma(k)}(t))=(q_{\sigma(k)}(t^{-1}), q_k(t^{-1}))$ is injective and $q_{\sigma(k)}(t^{-1})-q_k(t^{-1})<0$ (resp. $q_{\sigma(k)}(t^{-1})-q_k(t^{-1})>0$). Thus $t\mapsto (q_k(t), q_{\sigma(k)}(t))$ is injective for $\displaystyle t\in\left(\frac{1}{1+\delta}, 1+\delta\right)$. We may further restrict $\displaystyle\left(\frac{1}{1+\delta}, 1+\delta\right)$ to a smaller neighborhood of 1 so that the part of the curve for $t$ in that neighborhood is smooth. Now we can reparametrize that part of the curve by $(r_1(s), r_2(s), \cdots, r_m(s))$ so that the tangent vectors with respect to $s$ is nowhere vanishing (we may parametrize by arc length, for example). In particular the tangent vector at the point $(q_1(1), q_2(1), \cdots, q_m(1))=(\text{dim}\rho_1, \text{dim}\rho_2, \cdots, \text{dim}\rho_m)$ is nonzero. Let $s=s_0$ correspond to the point $(\text{dim}\rho_1, \text{dim}\rho_2, \cdots, \text{dim}\rho_m)$. Note that, since $G$ is semisimple, for any $k$ $q_k(t)$ is of the form $\displaystyle\sum_n a_{k, n}t^n$ where $\displaystyle\sum_n na_{k, n}=0$ and $a_{k, n}\geq 0$. By the AM-GM inequality, 
\[\displaystyle\sum_n a_{k, n}t^n\geq\left(\sum_n a_{k, n}\right)\sqrt{\prod_n (t^n)^{a_{k, n}}}=\text{dim}\rho_k,\] 
and equality holds if and only if $t=1$. Thus each coordinate $r_k(s)$ of the curve achieves its minimum at $s=s_0$ and so $r'(s_0)=(r'_1(s_0), r'_2(s_0), \cdots, r'_m(s_0))=(0, 0, \cdots, 0)$, contradicting the nonvanishing of the tangent vector at $(\text{dim}\rho_1, \text{dim}\rho_2, \cdots, \text{dim}\rho_m)$. It follows that the curve is not smooth at the point $(\text{dim}\rho_1, \text{dim}\rho_2, \cdots, \text{dim}\rho_m)$, which more precisely is a cusp given that each coordinate of the curve is bounded below by the corresponding coordinate of the point.
\end{proof}
	
\section{Characterization of isotropy formality of corank 1 isotropy actions}
\begin{theorem}\label{invtpoly}
	Let $G$ be a compact connected Lie group of rank $n$, $S$ a torus subgroup of dimension being $n-1$, $T$ a maximal torus of $G$ containing $S$, $\mu\in \chi(T)$ a primitive character of $T$ such that $S=\text{ker }\mu$, $L$ a primitive weight in the weight lattice $\mathfrak{t}^*_\mathbb{Z}$ corresponding to $\mu$ after exponentiation, and $W$ the Weyl group of $G$. Define
	\[\Phi_L:=\left(\prod_{w\in W/W_{[L]}} wL\right)^{n(L)}\in \text{Sym}^*(\mathfrak{t}^*)^W,\]
	where $W_{[L]}$ is the stabilizer of $[L]:=\{\pm L\}\in \mathfrak{t}^*/\{\pm 1\}$, $n(L)=1$ or 2 depending on whether $\displaystyle\prod_{w\in W/W_{[L]}}wL$ is $W$-invariant or $W$-anti-invariant, i.e. 
	\[\displaystyle w'(\prod_{w\in W/W_{[L]}}wL)=\text{sgn}(w')\prod_{w\in W/W_{[L]}}wL\text{ for all }w'\in W.\] 
	Then $(G, S)$ is isotropy formal if and only if $\Phi_L\notin (f_1, \cdots, f_n)^2$, where $f_1, \cdots, f_n$ are the fundamental $W$-invariant homogeneous polynomials on $\mathfrak{t}$, i.e. $\text{Sym}^*(\mathfrak{t}^*)^W=\mathbb{R}[f_1, \cdots, f_n]$.
\end{theorem}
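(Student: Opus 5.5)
The plan is to reduce, via Theorem \ref{rep}, to a statement about complete local rings and then compute those rings through the Chern character. By Theorem \ref{rep}, $(G,S)$ is isotropy formal if and only if $R:=i^*R(G;\mathbb{Q})$ is regular at $I:=i^*I(G;\mathbb{Q})$. A Noetherian local ring is regular if and only if its completion is regular, so it suffices to study $\widehat{R}_I$. I would identify $\widehat{R}_I$ with the image of
\[\widehat{R(G;\mathbb{Q})}_{I(G;\mathbb{Q})}\to \widehat{R(S;\mathbb{Q})}_{I(S;\mathbb{Q})}.\]
This uses that completion is exact on finitely generated modules (Artin--Rees). It also uses that the $I(S)$-adic and $I$-adic topologies on $R$ agree, because $R(S;\mathbb{Q})$ is finite over $R$ and $\sqrt{IR(S;\mathbb{Q})}$ contains $I(S;\mathbb{Q})$ locally at the augmentation.

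The first key step is the Chern character identification, working over $\mathbb{R}$ (or $\mathbb{Q}$ with rational forms, since $L$ is integral and regularity is insensitive to this field extension). The Chern character gives $\widehat{R(T)}_{I(T)}\cong \mathbb{R}[[\mathfrak{t}]]$, meaning formal power series on $\mathfrak{t}$, i.e. the completion of $\text{Sym}^*(\mathfrak{t}^*)$. Taking $W$-invariants gives
\[\widehat{R(G)}_{I(G)}\cong \mathbb{R}[[\mathfrak{t}]]^W=\mathbb{R}[[f_1,\dots,f_n]],\]
a regular local ring with maximal ideal $\mathfrak{m}=(f_1,\dots,f_n)$. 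On the other side, $\widehat{R(S)}\cong\mathbb{R}[[\mathfrak{s}]]=\mathbb{R}[[\mathfrak{t}]]/(L)$, since $\mathfrak{s}=\ker L$. Hence $\widehat{R}_I\cong \mathbb{R}[[f_1,\dots,f_n]]/\mathfrak{K}$, where $\mathfrak{K}$ is the ideal of $W$-invariant power series that vanish on the hyperplane $\ker L$.

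The second and central step is to show that $\mathfrak{K}=(\Phi_L)$, with $\Phi_L$ rewritten as a power series (in fact a polynomial) in the $f_i$. The argument runs as follows.
\begin{enumerate}
\item A $W$-invariant $h$ vanishing on $\ker L$ vanishes on every $\ker wL$. These hyperplanes are pairwise distinct for $w\in W/W_{[L]}$, so $h$ is divisible by $P:=\prod_{w\in W/W_{[L]}}wL$.
\item If $P$ is $W$-invariant, then $h=P\cdot h'$ with $h'$ invariant, and we are done.
\item If $P$ is anti-invariant, then $h/P$ is anti-invariant. Each $\ker wL$ must then be a reflection hyperplane, since otherwise $P$ could not change sign under $W$. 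An anti-invariant series vanishes on reflection hyperplanes, so $h/P$ is again divisible by $P$, and the quotient $h/P^2$ is invariant. So $\mathfrak{K}=(P^2)$.
\end{enumerate}
I expect this step to be the main obstacle. The delicate points are the anti-invariant case (the vanishing argument and the claim that the hyperplanes $\ker wL$ are reflection hyperplanes) and checking that the sign character $\text{sgn}$ is really the only non-trivial possibility. I would attack it by restricting to $W_{[L]}$ and comparing with the standard fact that anti-invariants equal $\prod_{\alpha>0}\alpha$ times invariants.

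Finally, we have $\widehat{R}_I\cong\mathbb{R}[[f_1,\dots,f_n]]/(\Phi_L)$, a quotient of an $n$-dimensional regular local ring by a single nonzero element. Its dimension is $n-1=\text{rank }S$, consistent with the Krull dimension computation in the proof of Theorem \ref{rep}. Such a quotient is regular if and only if $\Phi_L\notin\mathfrak{m}^2=(f_1,\dots,f_n)^2$, since $\dim\mathfrak{m}/(\mathfrak{m}^2+(\Phi_L))$ equals $n-1$ exactly in that case. Combining this with Theorem \ref{rep} gives the statement.
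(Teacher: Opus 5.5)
Your proposal is correct, and its overall structure matches the paper's. You use Theorem \ref{rep}, pass to completions, and use the Chern character to identify $\widehat{R(G;\mathbb{R})}$ with $\mathbb{R}[[f_1,\dots,f_n]]$ and $\widehat{R(S;\mathbb{R})}$ with $\mathbb{R}[[\mathfrak{t}]]/(L)$. You finish with the criterion that a quotient of a regular local ring by one nonzero element $x$ is regular iff $x\notin\mathfrak{m}^2$; the paper phrases this as $\ker(i^*)\nsubseteq I(G;\mathbb{R})^2$.

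The real difference is how the kernel is shown to equal $(\Phi_L)$. The paper argues abstractly. A Krull dimension count, using integrality of $\widehat{R(S;\mathbb{R})}$ over the image, shows the kernel is a height-one prime in a Noetherian UFD, hence principal. It then identifies the generator by a vanishing-locus argument: the generator must be a $W$-invariant of minimal degree vanishing on $\bigcup_w\ker wL$, which is asserted rather than proved. You instead compute $\mathfrak{K}$ directly by divisibility in the UFD $\mathbb{R}[[\mathfrak{t}]]$. This gives principality and the explicit generator at once, needs no dimension count, and justifies the exponent $n(L)$, which is exactly the step the paper leaves heuristic. You are also more careful than the paper in identifying $\widehat{R}_I$ with the image of $\widehat{R(G)}$ in $\widehat{R(S)}$.

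One correction to your plan: do not try to show that $\text{sgn}$ is the only nontrivial character by which $P:=\prod_{w\in W/W_{[L]}}wL$ can transform, because it is false. For $B_n$ and $L=\omega_1=e_1$, one gets $P=e_1\cdots e_n$. This changes sign under the short-root reflections (sign changes) but is fixed by all long-root reflections, so it is neither invariant nor anti-invariant in the paper's sense. Yet Theorem \ref{corank1} needs $\Phi_{\omega_1}=e_1^2\cdots e_n^2$ in this case. So $n(L)$ must be read as $1$ if $P$ is invariant and $2$ otherwise. Your step (3) already proves this once it is phrased for an arbitrary character $\chi$ with $wP=\chi(w)P$.

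The needed input is a direct count, which should replace your planned comparison with $\prod_{\alpha>0}\alpha$ (that comparison only covers $\chi=\text{sgn}$).
\begin{enumerate}
\item A reflection $s_\alpha$ permutes the lines $\mathbb{R}\,wL$. Lines interchanged in pairs contribute a factor $(\pm1)^2=1$ to $s_\alpha P/P$. A line preserved by $s_\alpha$ contributes $-1$ exactly when it equals $\mathbb{R}\alpha$.
\item Hence $\chi(s_\alpha)=-1$ iff $\ker\alpha$ is one of the $\ker wL$.
\item If $\chi\neq 1$, this happens for some reflection. Since the hyperplanes form one $W$-orbit, every $\ker wL$ is then the mirror of a reflection on which $\chi=-1$.
\item So $h/P$ is odd with respect to each such reflection, hence divisible by each $wL$, hence by $P$. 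This gives $\mathfrak{K}=(P^2)$.
\end{enumerate}
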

\begin{proof}
	First, we use $\widehat{R}$ to denote the completion of $R$ at its augmentation ideal. Recall that the augmentation ideal of $\text{Sym}^*(V)$ is $\text{Sym}^{\geq 1}(V)$. Then with the identification $\text{Sym}^*(\mathfrak{t}^*)^W=\mathbb{R}[f_1, \cdots, f_n]$, the augmentation ideal of $\text{Sym}^*(\mathfrak{t}^*)^W$ is $(f_1, \cdots, f_n)$. Consider the following diagram.
	\begin{eqnarray*}
		\xymatrix{\widehat{R(G; \mathbb{R})}\ar[r]^{\text{ch}}\ar[d]^{i^*}& \widehat{\text{Sym}^*(\mathfrak{t}^*)^W}\ar[d]^{i^*}\\ \widehat{R(S; \mathbb{R})}\ar[r]^{\text{ch}}&\widehat{\text{Sym}^*(\mathfrak{s}^*)}}
	\end{eqnarray*}
	The horizontal maps are Chern character maps from the completed equivariant $K$-theory coefficient ring (identified with the completed representation ring) to the completed equivariant cohomology coefficient ring (identified with the completed polynomial ring on the Lie algebra of the acting group). By \cite[Theorem 5.3]{CFok}, the Chern character map is a ring isomorphism. The diagram commutes due to the functoriality of the Chern character. By Theorem \ref{rep}, $(G, S)$ is isotropy formal if and only if $i^*R(G; \mathbb{R})$ is regular at $i^*I(G; \mathbb{R})$. The latter condition then is equivalent to $i^*\widehat{R(G; \mathbb{R})}$ being regular at $i^*I(G; \mathbb{R})$, which in turn is equivalent to $\text{ker}(i^*: \widehat{R(G; \mathbb{R})}\to\widehat{R(S; \mathbb{R})})\nsubseteq I(G; \mathbb{R})^2$. Applying the Chern character and commutativity of the above diagram, we have that the last condition is equivalent to $\mathfrak{p}:=\text{ker}(i^*: \widehat{\text{Sym}^*(\mathfrak{t}^*)^W}\to \widehat{\text{Sym}^*(\mathfrak{s}^*)})\nsubseteq (f_1, \cdots, f_n)^2$. Note that
	\begin{align*}
		\text{Krull dim }\widehat{\text{Sym}^*(\mathfrak{t}^*)^W}&=\text{Krull dim }\widehat{\text{Sym}^*(\mathfrak{t}^*})\\
													&(\text{because }\widehat{\text{Sym}^*(\mathfrak{t}^*)}\text{ is integral over }\widehat{\text{Sym}^*(\mathfrak{t}^*)^W}\text{ by \cite[Exercise 5.12]{AM}})\\
													&=\text{dim }T\\
													&=\text{rank }G\\
													&=n\\
		\text{Krull dim }\widehat{\text{Sym}^*(\mathfrak{s}^*)}&=\text{dim }S\\
												&=n-1\\
		\text{Krull dim }i^*\widehat{\text{Sym}^*(\mathfrak{t}^*)^W}&=\text{Krull dim }i^*\widehat{R(G; \mathbb{R})}\\
													&=\text{Krull dim }\widehat{R(S; \mathbb{R})}\\
													&(\text{because }\widehat{R(S; \mathbb{R})}\text{ is integral over }i^*\widehat{R(G; \mathbb{R})}\text{ by \cite[Proposition 3.2]{Se}})\\
													&=\text{dim }S\\
													&=n-1.
	\end{align*}
	It follows that $\mathfrak{p}$ is a prime ideal of height 1 of $\widehat{\text{Sym}^*(\mathfrak{t}^*)^W}$. Since $\widehat{\text{Sym}^*(\mathfrak{t}^*)^W}$ is a Noetherian UFD, $\mathfrak{p}$ is a principal ideal (\cite[Lemma 10.120.6]{SP}). Let $\Phi_L$ be a generator of $\mathfrak{p}$. Now isotropy formality of $(G, S)$ is equivalent to $\Phi_L\nsubseteq (f_1, \cdots, f_n)^2$. To finish the proof, it remains to show that $\Phi_L$ can be chosen to be the polynomial in the proposition. Note that 
	\[\mathfrak{p}=(\Phi_L)\subseteq \text{ker}(i^*: \widehat{\text{Sym}^*(\mathfrak{t}^*)}\to \widehat{\text{Sym}^*(\mathfrak{s}^*)})=(L).\]
	The vanishing set $V(L)$ is the hyperplane $\text{ker }L$ in $\mathfrak{t}$. Since $\Phi_L$ is $W$-invariant, $V(\Phi_L)$ should contain $V(L)$ and be $W$-invariant in $\mathfrak{t}$. Thus $V(\Phi_L)$ is the $W$-orbit of the hyperplane $\text{ker }L$, i.e. 
	\[V(\Phi_L)=\bigcup_{w\in W/W_{[L]}}\text{ker }wL.\]
	Thus $\Phi_L$ should be a $W$-invariant polynomial of minimal degree on $\mathfrak{t}$ which vanishes on $\displaystyle \bigcup_{w\in W/W_{[L]}}\text{ker }wL$. The polynomial $\displaystyle \left(\prod_{w\in W/W_{[L]}} wL\right)^{n(L)}$ satisfies these conditions and this completes the proof.
\end{proof}
The following lemma, which is a consequence of Theorem \ref{invtpoly}, stipulates a bound on the size of the Weyl orbit of $[L]$ by the maximum degree of the fundamental $W$-invariant homogeneous polynomials in order for isotropy formality to hold. This places a severe restriction on $L$ which will prove very helpful in the classification of isotropy formality of corank 1 isotropy actions. 
\begin{lemma}\label{degreebound}
	Assume the same conditions as in Theorem \ref{invtpoly}. If $(G, S)$ is isotropy formal, then 
	\[n(L)|W/W_{[L]}|\leq \max\{\text{deg }f_1, \cdots, \text{deg }f_n\}.\]
\end{lemma}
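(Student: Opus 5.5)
By Theorem \ref{invtpoly}, isotropy formality of $(G,S)$ means $\Phi_L\notin(f_1,\dots,f_n)^2$. The plan is to compare degrees, using the grading of $\text{Sym}^*(\mathfrak{t}^*)^W=\mathbb{R}[f_1,\dots,f_n]$. The polynomial $\Phi_L$ is homogeneous of degree
\[\text{deg }\Phi_L=n(L)\,|W/W_{[L]}|,\]
since it is a product of $|W/W_{[L]}|$ linear forms $wL$, raised to the power $n(L)$. We argue by contraposition: assume $\text{deg }\Phi_L>\max_i \text{deg }f_i$, and show that $\Phi_L\in(f_1,\dots,f_n)^2$.

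Because $\Phi_L$ is $W$-invariant and homogeneous, it can be written uniquely as a polynomial $P(f_1,\dots,f_n)$ that is weighted-homogeneous of degree $d:=\text{deg }\Phi_L$, where $f_i$ carries weight $\text{deg }f_i$. Each monomial $f_1^{a_1}\cdots f_n^{a_n}$ appearing in $P$ satisfies $\sum a_i\,\text{deg }f_i=d$. A monomial with $\sum a_i=1$ is a single $f_i$, which has degree $\text{deg }f_i\le\max_j\text{deg }f_j<d$, so no such monomial can occur. A monomial with $\sum a_i=0$ is a constant, which is excluded because $d>0$. Every monomial therefore has $\sum a_i\ge 2$, and so $\Phi_L\in(f_1,\dots,f_n)^2$.

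By Theorem \ref{invtpoly}, this gives that $(G,S)$ is not isotropy formal, which is the contrapositive of the lemma.

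The only subtle point is a possible mismatch of ambient rings. In the proof of Theorem \ref{invtpoly}, $\Phi_L$ was chosen as a generator of an ideal in the completion $\widehat{\text{Sym}^*(\mathfrak{t}^*)^W}$, while the criterion is stated in the polynomial ring itself. We check that the explicit homogeneous polynomial lies in $(f_1,\dots,f_n)^2$ in the polynomial ring, and this implies membership in the completed ideal, since the completed ideal is generated by the image of the same ideal. The converse direction is not needed here. With that observation, the argument reduces to the degree count above and needs no further work.
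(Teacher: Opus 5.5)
Your proof is correct and is essentially the paper's argument. Both note that $\deg\Phi_L=n(L)|W/W_{[L]}|$, and that $\Phi_L\notin(f_1,\dots,f_n)^2$ forces a nonzero linear term in the $f_i$, which by weighted homogeneity requires $\deg\Phi_L=\deg f_i$ for some $i$. You phrase this as a contrapositive, and your remark about passing from the polynomial ring to its completion is a harmless check that the paper leaves implicit.
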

\begin{proof}
	The LHS of the inequality is the (homogeneous) degree of $\Phi_L$. The condition $\Phi_L\nsubseteq (f_1, \cdots, f_n)^2$ is equivalent to $\Phi_L$ having nonzero linear terms of $f_1, \cdots, f_n$ after being rewritten as a polynomial of $f_1, \cdots, f_n$. Thus the degree of $\Phi_L$ must be equal to the degree of one of the fundamental $W$-invariant polynomials. The lemma then follows.
\end{proof}
\begin{proof}[Proof of Theorem \ref{corank1}]
	\begin{enumerate}
		\item $A_n$: In this case, $\displaystyle \mathfrak{t}^*_\mathbb{Z}=\left.\left\{\sum_{i=1}^{n+1}a_i L_i\right| \sum_{i=1}^{n+1}a_i=0\right\}$, $W=S_{n+1}$ which permutes $L_i$, $\text{deg }f_i=i+1$ ($f_i$ is the elementary symmetric polynomials of degree $i+1$) and so the maximum fundamental degree is $n+1$. First, we shall verify that $L=\omega_1=L_1$ gives rise to an isotropy formal action. Note that $\Phi_L=L_1L_2\cdots L_{n+1}$, which is $f_n$. So $\Phi_L\nsubseteq (f_1, \cdots, f_n)^2$ and $(G, S)$ is isotropy formal by Theorem \ref{invtpoly}. When $n=3$, $L=\omega_2=L_1+L_2$ also leads to an isotropy formal action. That is because $\Phi_L=(L_1+L_2)(L_1+L_3)(L_1+L_4)$ (the stabilizer $W_{[L]}$ also include the permutation $(13)(24)$ which sends $L$ to $-L$ and hence stabilizes $[L]$), which is $f_2$ and thus not in $(f_1, \cdots, f_3)^2$.
		
		Now we shall prove that there are no other primitive weights which give rise to isotropy formal actions. Let $\displaystyle L=\sum_{i=1}^{n+1}a_i L_i$ and $m_1, m_2, \cdots, m_r$ the multiplicities of the coefficients $a_1, \cdots, a_{n+1}$. If there are no Weyl group elements which negate $L$, then $\displaystyle |W/W_{[L]}|=\frac{(n+1)!}{m_1!\cdots m_r!}$. If the multiplicity type $\{m_1, \cdots, m_r\}\neq \{1, n\}$, then the multiplicity type with the largest stabilizer is $\{2, n-1\}$. Note that
		\[|W/W_{[L]}|\geq \frac{(n+1)!}{2!(n-1)!}=\binom{n+1}{2}=\frac{(n+1)n}{2}\]
	and the last term is greater than $\max\{\text{deg }f_1, \cdots, \text{deg }f_n\}=n+1$ when $n\geq 3$. So according to Lemma \ref{degreebound}, when $n\geq 3$ and $L$ has the multiplicity type not equal to $\{1, n\}$ and there are no Weyl group elements taking $L$ to $-L$, then $L$ does not give rise to an isotropy formal pair.
		
	If there are Weyl group elements which take $L$ to $-L$, then either the nonzero coefficients of $L$ form pairs of numbers negative of each other, or $L$ is of the form $\sum_{i=1}^{\frac{n+1}{2}}L_i$ when $n+1$ is even. For the former case, the multiplicity type with the largest stabilizer is $\{1, 1, n-1\}$ ($L=L_1-L_2$ is an example with such a multiplicity type). Note that
	\[|W/W_{[L]}|\geq\frac{(n+1)!}{2(n-1)!}=\binom{n+1}{2}=\frac{(n+1)n}{2}\] 
	and again the last term is greater than $\max\{\text{deg }f_1, \cdots, \text{deg }f_n\}=n+1$ when $n\geq 3$. For the latter case, the multiplicity type is $\{\frac{n+1}{2}, \frac{n+1}{2}\}$. Then 
	\[|W/W_{[L]}|=\frac{1}{2}\binom{n+1}{\frac{n+1}{2}}\]
	which is greater than $\max\{\text{deg }f_1, \cdots, \text{deg }f_n\}=n+1$ when $n\geq 5$ and $n$ is odd. 
	
	In sum, using Lemma \ref{degreebound}, we rule out isotropy formality in the following cases.
	\begin{enumerate}
		\item $n\geq 3$, the multiplicity type of $L$ is not $\{1, n\}$ and no Weyl group elements take $L$ to $-L$.
		\item $n\geq 3$, the coefficients of $L$ forms pairs of numbers negative of each other.
		\item $n\geq 5$ and $n$ is odd, and $\displaystyle L=\sum_{i=1}^{\frac{n+1}{2}}L_i$.
	\end{enumerate}
	We are then left with cases $n=1$, 2 and 3. For $n=1$, then $S$ is the trivial subgroup and $(G, S)$ is trivially isotropy formal. For $n=2$ or 3, we can apply the above arguments to show that $L$ has to be one of the weights listed in the table in Theorem \ref{corank1} in order for $(G, S)$ to be isotropy formal.
	\item $B_n$, $C_n$ and $D_n$: The proof of these cases is similar to that of the type $A_n$ case and also involves ruling out the weight $L$ not listed in the table in Theorem \ref{corank1} by showing that the degree of $\Phi_L$ is greater than the maximal degree of fundamental Weyl invariant polynomials, violating the degree bound in Lemma \ref{degreebound}.
	\item $G_2$: In this case the dimension of $S$ is 1. Note that any representations of $G_2$ are self-dual (cf. \cite{Bo}) and so are their restricted representations to $S$. By Theorem \ref{circle}, $(G, S)$ is isotropy formal for any circle subgroup $S$.
	\end{enumerate}
	As an interlude, let us prove a general result about the size of the smallest $W$-orbit of a weight for a general compact, connected simple Lie group $G$, which will serve as the common argument for the non-existence of isotropy formal corank 1 actions for the remaining exceptional cases. Let $\displaystyle L=\sum_{i=1}^n\omega_i$ and $s_i$ be the reflection across the hyperplane orthogonal to the simple root $\alpha_i$. Note that $W$ is generated by $s_i$ for $1\leq i\leq n$. Note that $s_i$ stabilizes $L$ as a weight if and only if $L-\langle L, \alpha_i^\vee\rangle\alpha_i=L$, i.e. $\langle L, \alpha_i^\vee\rangle=0$, or equivalently $a_i=0$. Thus the stabilizer of $L$ as a weight is the subgroup $W_{\{1, \cdots, n\}\setminus\{i|\ a_i=0\}}:=\langle s_i|\ a_i=0\rangle$. Then if the size of the $W$-orbit of $L$ as a weight is the smallest, then only one coefficient among $a_1, \cdots, a_n$ is nonzero. In other words, the $W$-orbits of the fundamental weights are candidates for the smallest orbit. The stabilizer $W_{i}$ of $\omega_i$ is in fact the Weyl group of the maximal parabolic subgroup of the complexified group $G^\mathbb{C}$ corresponding to the Dynkin diagram obtained by deleting the $i$-th node from that of $G$. 
	\begin{enumerate}
		\setcounter{enumi}{3}
		\item $F_4$: The sizes of the $W$-orbits of the fundamental weights of $F_4$ are as follows.
		\begin{align*}
			|W\cdot\omega_1|&=\frac{|W(F_4)|}{|W(B_3)|}=\frac{1152}{2^3\cdot 3!}=24\\
			|W\cdot\omega_2|&=|W\cdot\omega_3|=\frac{|W(F_4)|}{|W(A_1\times A_2)|}=\frac{1152}{2\cdot 6}=96\\
			|W\cdot\omega_4|&=\frac{|W(F_4)|}{|W(C_3)|}=\frac{1152}{2^3\cdot 3!}=24
		\end{align*}
		Thus the $W$-orbits of $\omega_1$ and $\omega_4$ are the smallest. Since $-\text{Id}_{\mathfrak{t}^*}\in W(F_4)$ (cf. \cite{Bo}), $|W(F_4)\cdot[\omega_i]|=\frac{24}{2}=12$ for $i=1, 4$. The degree of $\Phi_{\omega_i}$ for $i=1$ and $4$ is $12\cdot 2=24$ because $\displaystyle\prod_{w\in W/W_{[\omega_i]}}w\omega_i$ is $W$-anti-invariant. However, $\max\{\text{deg }f_1, \cdots, \text{deg }f_4\}=12$. By Lemma \ref{degreebound}, there does not exist any isotropy formal action of corank 1 when $G=F_4$.
		\item $E_6$: The smallest $W$-orbit of a weight of $E_6$ is the orbit of $\omega_1$. Its size is
		\[\frac{|W(E_6)|}{|W(D_5)|}=\frac{51840}{1920}=27.\]
		Since $-\text{Id}_{\mathfrak{t}^*}\notin W(E_6)$, the orbit of $\omega_1$ as a weight is also the $W$-orbit of $[\omega_1]$. The degree of $\Phi_{\omega_1}$ is at least 27 which is greater than $\max\{\text{deg }f_1, \cdots, \text{deg }f_6\}=12$. By Lemma \ref{degreebound}, there does not exist any isotropy formal actions of corank 1 when $G=E_6$.
		\item $E_7$: The smallest $W$-orbit of a weight of $E_7$ is the orbit of $\omega_1$. Its size is
		\[\frac{|W(E_7)|}{|W(E_6)|}=\frac{2903040}{51840}=56.\]
		Since $-\text{Id}_{\mathfrak{t}^*}\in W(E_7)$ (cf. \cite{Bo}), $|W(E_7)\cdot[\omega_1]|=\frac{56}{2}=28$. The degree of $\Phi_{\omega_1}$ is at least 28 which is greater than $\max\{\text{deg }f_1, \cdots, \text{deg }f_7\}=18$. By Lemma \ref{degreebound}, there does not exist any isotropy formal actions of corank 1 when $G=E_7$.
		\item $E_8$: The smallest $W$-orbit of a weight of $E_8$ is the orbit of $\omega_1$. Its size is
		\[\frac{|W(E_8)|}{|W(E_7)|}=\frac{696729600}{2903040}=240.\]
		Since $-\text{Id}_{\mathfrak{t}^*}\in W(E_8)$ (cf. \cite{Bo}), $|W(E_8)\cdot[\omega_1]|=\frac{240}{2}=120$. The degree of $\Phi_{\omega_1}$ is at least 120 which is greater than $\max\{\text{deg }f_1, \cdots, \text{deg }f_8\}=30$. By Lemma \ref{degreebound}, there does not exist any isotropy formal actions of corank 1 when $G=E_8$.
	\end{enumerate}
\end{proof}
\section{Characterization of isotropy formality using index pairing}
Apart from Theorem \ref{rep}, we also characterize isotropy formality in terms of a certain nondegeneracy of the equivariant $K$-theoretic index pairing of two special equivariant $K$-theory classes. This section is devoted to the proof of Theorem \ref{indexthry}, as well as a demonstration of its applicability to two examples.
	\begin{proposition}\label{nondegenformal}
		Let $G$ be a connected compact Lie group and $S$ a torus subgroup of $G$, and $\pi: G/S\to \text{pt}$ the collapsing map. Then $G/S$ is formal if and only if there exist $a\in R(S; \mathbb{Q})/i^*I(G; \mathbb{Q})$ and $b\in \bigwedge\nolimits^{\text{dim }P}P$ such that the $K$-theoretic index pairing $\pi_*(ab)$ is nonzero.
	\end{proposition}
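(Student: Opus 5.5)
We plan to use the structure result Proposition \ref{kthrystructure} together with Poincaré duality for the compact orientable manifold $G/S$, transported to $K$-theory via the Chern character. Write $B:=R(S;\mathbb{Q})/i^*I(G;\mathbb{Q})\oplus\mathfrak{a}$, so that $K^*(G/S;\mathbb{Q})\cong B\otimes\bigwedge\nolimits^*P$. Set $p:=\dim P$ and $r:=\text{rank }G-\dim S$, so $p\leq r$. The index pairing $\pi_*:K^*(G/S;\mathbb{Q})\to K^*(\text{pt};\mathbb{Q})$ is the pushforward. Under the Chern character and the Todd class of $G/S$, this pushforward becomes integration against the fundamental class, up to multiplication by an invertible class $1+\cdots$. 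Since $G/S$ has a stably trivial tangent bundle, we may in fact expect Todd $=1$, but only invertibility will be used. Consequently $\pi_*(x)\neq 0$ if and only if the top-degree component of $\text{Td}\cdot\text{ch}(x)$ is nonzero.

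First, the formal case. If $G/S$ is formal, then $\mathfrak{a}=0$ and $p=r$. Through the Chern character, $H^*(G/S;\mathbb{Q})\cong H^{\text{even}}(BS)/(i^*H^+(BG))\otimes\bigwedge P_H$, where $P_H$ is spanned by odd classes transgressing to a regular sequence (\cite{GHV}). The top class of $G/S$ is the product of the socle of the complete-intersection quotient $R(S;\mathbb{Q})/i^*I(G;\mathbb{Q})$ (an Artinian Gorenstein ring) with a generator of $\bigwedge^rP$. We will choose $b$ to be a generator of $\bigwedge^{p}P$. For $a$ we take an element whose Chern character has nonzero top component in the Gorenstein quotient, namely a lift of the socle class, which is possible because $\text{ch}$ identifies $R(S;\mathbb{Q})/i^*I(G;\mathbb{Q})$ with $\mathbb{Q}[\mathfrak{s}]/(i^*f_j)$ after completion, as in the proof of Theorem \ref{invtpoly}. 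Then $\text{ch}(ab)$ has nonzero top-degree part. Multiplying by the Todd class only adds lower-order corrections, and these cannot affect the top degree. Hence $\pi_*(ab)\neq 0$.

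Next, the converse. Suppose $G/S$ is not formal, so $p<r$. The dimension of $G/S$ is $\dim G-\dim S$. The classes $\text{ch}(ab)$ with $a\in R(S;\mathbb{Q})/i^*I(G;\mathbb{Q})$ and $b\in\bigwedge^pP$ lie in $\text{ch}(B_0)\cdot\bigwedge^pP_H$, where $B_0$ is the even subalgebra generated by characteristic classes. By the Cartan/Koszul model, $H^*(G/S)$ is the cohomology of $(H(BS)\otimes\bigwedge(y_1,\dots,y_n),d)$ with $n=\text{rank }G$. Exactly $p$ of the generators can be chosen to be cocycles (these span $P_H$), and the remaining $n-p$ are not. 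The top class of $G/S$ then requires odd degree contributions from cohomology classes involving the non-cocycle generators, that is, from $\mathfrak{a}$-type classes. We show this by a degree count. The fundamental class has odd part of degree at least $\sum\deg y_j-\dots$; more precisely, $H^{\text{even}}(BS)/(i^*H^+BG)$ times $\bigwedge^pP_H$ is a proper subspace missing the top class, because the top class lies in $\mathfrak{a}\otimes\bigwedge^pP$ by \cite[pp. 73, 83]{GHV}. Hence every $\text{ch}(ab)$, and also $\text{Td}\cdot\text{ch}(ab)$, since multiplication by the even characteristic classes of $\text{Td}$ stays in the subalgebra $B_0\otimes\bigwedge P$, has zero top component. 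Therefore $\pi_*(ab)=0$.

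The main obstacle is the converse step, namely the claim that when $\mathfrak{a}\neq 0$ the top class of $G/S$ does not lie in the image of $(R(S)/i^*I(G))\otimes\bigwedge^pP$. We expect the top class to lie in $\mathfrak{a}\otimes\bigwedge^pP$, and we would verify this using the GHV description $H^*(G/S)\cong H(\mathbb{Q}[\mathfrak{s}]\otimes\bigwedge(y_{p+1},\dots,y_n))\otimes\bigwedge P$. In the first factor, the top class has positive exterior degree, since $\mathbb{Q}[\mathfrak{s}]/(i^*f_{p+1},\dots)$ is not finite-dimensional when $n-p<\dim S$ fails to be a regular sequence count. That first factor lies in the subspace complementary to the image of $R(S;\mathbb{Q})$, and we will arrange the identification so that this complementary subspace is exactly $\mathfrak{a}$.
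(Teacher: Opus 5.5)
Your converse is the paper's argument: $\mathrm{ch}(ab)\,\mathrm{td}(G/S)$ has exterior degree $0$, while for non-formal $G/S$ the fundamental class lies in $\mathfrak{a}\otimes\bigwedge^{\dim P_H}P_H$.

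The forward direction is where your route differs. The paper never leaves $K$-theory. It fixes a nonzero $b\in\bigwedge^{\dim P}P$ and uses nondegeneracy of the $K$-theoretic index pairing to get some $\bar a\in K^*(G/S;\mathbb{Q})$ with $\pi_*(\bar a b)\neq 0$. It then decomposes $\bar a$ via Proposition \ref{kthrystructure} (with $\mathfrak{a}=0$). Every component of $\bar a$ of positive exterior degree in $P$ is killed by the top-degree element $b$, so the exterior-degree-zero component $a_0\in R(S;\mathbb{Q})/i^*I(G;\mathbb{Q})$ already works. You instead pass to cohomology and produce an explicit witness: $a=\mathrm{ch}^{-1}$ of the socle of the complete intersection $H^*(BS;\mathbb{Q})/i^*H^{>0}(BG;\mathbb{Q})$. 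This is more informative, but it forces you to track how $\mathrm{ch}$ and $\mathrm{td}$ mix degrees, which the paper's argument avoids.

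That tracking is not correct as written.

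First, stable triviality of $T(G/S)$ gives $\hat A=1$, not $\mathrm{td}=1$. On $SU(2)/T=\mathbb{CP}^1$, $\mathrm{td}=1+x$ with $x$ the point class.

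Second, and more seriously, positive-degree Todd terms times lower-degree parts of $\mathrm{ch}(ab)$ do land in top degree. So it is not enough that $\mathrm{ch}(a)$ have a nonzero socle component: on $\mathbb{CP}^1$, $a=[\mathcal{O}(-1)]$ has $\mathrm{ch}(a)=1-x$, yet $\pi_*(a)=0$. You must take $\mathrm{ch}(a)$ to be exactly the socle class $\sigma$. Then $\sigma$ annihilates the positive-degree part of the Gorenstein quotient. Since $\mathrm{ch}(b)$ is a nonzero multiple of the generator of $\bigwedge^{\dim P_H}P_H$ (which the paper also uses), $\mathrm{ch}(ab)\,\mathrm{td}$ is a nonzero multiple of the fundamental class. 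The reason is that $\sigma$ kills the corrections, not that the corrections are ``lower order''.

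Third, the justification in your last paragraph is wrong. The quotient of $\mathbb{Q}[\mathfrak{s}]$ by the differentials of the non-cocycle generators is $H^*(BS;\mathbb{Q})/i^*H^{>0}(BG;\mathbb{Q})$, which is always finite-dimensional because $H^*(BS;\mathbb{Q})$ is finite over $H^*(BG;\mathbb{Q})$. The correct reason is short. The exterior grading on $H^*(G/S;\mathbb{Q})$ is nonnegative and additive under products. If the fundamental class had exterior degree $0$, no nonzero element of $\mathfrak{a}$ could pair nontrivially with anything, contradicting Poincar\'e duality. And $\mathfrak{a}\neq 0$ exactly when $G/S$ is not formal. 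Equivalently, $n-p>\dim S$ elements cannot form a regular sequence in $\mathbb{Q}[\mathfrak{s}]$. So Koszul homology survives in degree $n-p-\dim S>0$, and the fundamental class sits in that top degree.

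With these repairs your proof goes through. The paper itself simply asserts the location of the fundamental class, citing GHV.
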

	\begin{proof}
		First, the tangent bundle $TG/S$ is isomorphic to $G\times_S\mathfrak{g}/\mathfrak{s}=G\times_S\left(\mathfrak{t}/\mathfrak{s}\oplus\bigoplus_{\alpha\in R^+}\mathfrak{g}_\alpha^{\mathbb{R}}\right)$, where $\mathfrak{g}_\alpha^{\mathbb{R}}=\left(\mathfrak{g}_\alpha^\mathbb{C}\oplus\mathfrak{g}_{-\alpha}^\mathbb{C}\right)\cap \mathfrak{g}$ (where $\mathfrak{g}_\alpha^\mathbb{C}$ is the root space corresponding to $\alpha$ in $\mathfrak{g}\otimes_\mathbb{R}\mathbb{C}$) for $\alpha\in R^+$ determine an $S$-invariant complex structure of the tangent bundle. Thus $G/S$ is $S$-equivariantly almost complex and hence $S$-equivariantly $\text{Spin}^c$, and it makes sense to consider the $K$-theoretic pushforward $\pi_*$. Suppose $G/K$ is formal. Choose any nonzero element $b$ of $\bigwedge\nolimits^{\text{dim }P}P$. By the nondegeneracy of the index pairing of any compact $\text{Spin}^c$ manifold, there exists $\overline{a}\in K^*(G/S)\otimes\mathbb{Q}$ such that $\pi_*(\overline{a}b)\neq 0$. By Proposition \ref{kthrystructure}, $\overline{a}=a_0+\sum_{i=1}^n a_ib_i$, where $a_i\in R(S; \mathbb{Q})/i^*I(G; \mathbb{Q})$ for $0\leq i\leq n$ and $0\neq b_i\in \bigwedge\nolimits^*P$ for $1\leq i\leq n$. It follows that $\pi_*(a_0b)=\pi_*(\overline{a}b)\neq 0$.
		
		To prove the converse, it suffices to focus on the case where $\text{dim }S<\text{rank }G$ because $G/S$ is formal if $S$ is a maximal torus of $G$. Now we assume that there exists $a\in R(S; \mathbb{Q})/i^*I(G; \mathbb{Q})$ and $b\in \bigwedge\nolimits^{\text{dim }P}P$ such that $\pi_*(ab)\neq 0$. By the Atiyah-Singer index theorem, 
		\[\pi_*(ab)=\int_{G/S}\text{ch}(ab)\text{td}(G/S).\] 
		So $\text{ch}(ab)\text{td}(G/S)$ contains a term which is a ``volume form'' in $H^{\text{dim }G/S}(G/S; \mathbb{Q})$. On the other hand, consider the Cartan algebra $(H^*(BS; \mathbb{Q})\otimes H^*(G; \mathbb{Q}), d)=(\text{Sym}^*(\mathfrak{s}^*)\otimes \bigwedge\nolimits^*V_H, d)$, which is a minimal model for $H^*(G/S; \mathbb{Q})$. Here the differential $d$ satisfies $d(\bigwedge\nolimits^*V_H)\subseteq \text{Sym}^*(\mathfrak{s}^*)$ and $d(\text{Sym}^*(\mathfrak{s}^*))=0$. The exterior degree on $\bigwedge\nolimits^*V_H$ induces an exterior degree on the complex $(\text{Sym}^*(\mathfrak{s}^*)\otimes\bigwedge\nolimits^*V_H)_p:=\text{Sym}^*(\mathfrak{s}^*)\otimes\bigwedge\nolimits^p V_H$ and hence on its cohomology. Let 
		\[P_H:=\{p\in V_H| dp\in \text{Sym}^{\geq 1}(\mathfrak{s}^*)\cdot dV_H\}\]
		and $P'_H$ a graded linear complement of $P_H$ in $V_H$. Then 
		\begin{align*}
			H^*(G/S; \mathbb{Q})&\cong H^*(\text{Sym}^*(\mathfrak{s}^*)\otimes\bigwedge\nolimits^* P'_H, d)\\
							&\cong (H^*(BS; \mathbb{Q})/i^*H^{>0}(BG; \mathbb{Q})\oplus \mathfrak{a})\otimes \bigwedge\nolimits^* P_H
		\end{align*}
		where $\mathfrak{a}$ is the ideal of elements of positive exterior degrees (cf. \cite[pp. 73, 83, 52]{GHV} and \cite[Theorem 2.6]{CFok}). Now suppose for the sake of contradiction that $G/S$ is not formal. Then $\mathfrak{a}\neq 0$ (\cite[Theorem 2.6]{CFok}). Any ``volume form" in $H^*(G/S; \mathbb{Q})$ is of top positive exterior degree and in particular an element of $\mathfrak{a}\otimes \bigwedge\nolimits^{\text{dim }P_H}P_H$. However, the exterior degree of $\text{ch}(ab)\text{td}(G/S)$ is 0. That is because $\text{ch}(b)\in \text{ch}(\bigwedge\nolimits^{\text{dim }P}P)=\bigwedge\nolimits^{\text{dim }P_H}P_H$ and $\text{ch}(a)\in \text{ch}(R(S; \mathbb{Q})/i^*I(G; \mathbb{Q}))=H^*(BS; \mathbb{Q})/i^*H^{>0}(BG; \mathbb{Q})$ are of 0 exterior degree, and so is
		\[\text{td}(G/S)=\prod_{\alpha\in R^+}\frac{c_1(L_\alpha)}{1-e^{-c_1(L_\alpha)}}\in H^*(BS; \mathbb{Q})/i^* H^{>0}(BG; \mathbb{Q})\]
		where $L_\alpha:=G\times_S\mathfrak{g}_\alpha^{\mathbb{R}}=G\times_S\mathbb{C}_\alpha$. This way we have reached a contradiction and completed the proof.
	\end{proof}
	Theorem \ref{indexthry} we are going to prove next can be regarded as the equivariant analogue of Proposition \ref{nondegenformal}.
	\begin{proof}[Proof of Theorem \ref{indexthry}] Suppose $(G, S)$ is isotropy formal. By \cite[Theorem A]{CFok}, $G/S$ is formal. Proposition \ref{nondegenformal} implies that there exist $a\in R(S; \mathbb{Q})/i^* I(G; \mathbb{Q})$ and $b\in \bigwedge\nolimits^{\text{dim }P}P$ such that $\pi_*(ab)\neq 0$. Since $P=\delta^{G/K}(\text{ker}(i^*))$ by Corollary \ref{pequaldel}, there is a natural equivariant lift $\widetilde{b}\in \bigwedge\nolimits^{\text{dim }P}_{R(S; \mathbb{Q})}\mathcal{L}_S$ of $b$. More precisely, $\widetilde{b}\in \bigwedge\nolimits^{\text{rank }G-\text{dim }S}_{R(S; \mathbb{Q})}\mathcal{L}_S$ because $\text{dim }P=\text{rank }G-\text{dim }S$ by Proposition \ref{kthrystructure}. Likewise, there is a natural equivariant lift $\widetilde{a}\in A$ of $a$ because $a$ is represented by the virtual vector bundle constructed using the line bundles $G\times_S\mathbb{C}_\mu$ associated to the representation $\mu$ of $S$, and these line bundles admit the natural $S$-equivariant structure given by the left translation on $G$. By the commutativity of $\pi_*$ and the forgetful map $f$, $f(\pi_*(\widetilde{a}\widetilde{b}))=\pi_*(ab)\neq 0$ and so $\pi_*(\widetilde{a}\widetilde{b})\notin I(S; \mathbb{Q})$ as desired.
	
	Now suppose that there exist $\widetilde{a}\in A$ and $\widetilde{b}\in\bigwedge\nolimits^{\text{rank }G-\text{dim }S}_{R(S; \mathbb{Q})}\mathcal{L}_S$ such that $\pi_*(\widetilde{a}\widetilde{b})\notin I(S; \mathbb{Q})$. Then $\pi_*(f(\widetilde{a})f(\widetilde{b}))\neq 0$. Note that $f(\widetilde{a})\in f(A)=R(S; \mathbb{Q})/i^*I(G; \mathbb{Q})$ and $f(\widetilde{b})\in f(\bigwedge\nolimits^{\text{rank }G-\text{dim }S}_{R(S; \mathbb{Q})}\mathcal{L}_S)=\bigwedge\nolimits^{\text{rank }G-\text{dim }S}_\mathbb{Q}\delta^{G/K}(\text{ker}(i^*))\subseteq \bigwedge\nolimits^{\text{rank }G-\text{dim }S}_\mathbb{Q}P$. Clearly $f(\widetilde{b})\neq 0$, for otherwise $\pi_*(f(\widetilde{a})f(\widetilde{b}))=0$. So $\bigwedge\nolimits^{\text{rank }G-\text{dim }S}P\neq 0$, implying that $\text{dim }P\geq \text{rank }G-\text{dim }S$, but $\text{rank }G-\text{dim }S\leq \text{dim }P$ by Proposition \ref{kthrystructure}. It follows that $\text{dim }P=\text{rank }G-\text{dim }S$ and $\text{dim }\bigwedge\nolimits_\mathbb{Q}^{\text{rank }G-\text{dim }S}P=1$. Thus $\bigwedge\nolimits_\mathbb{Q}^{\text{rank }G-\text{dim }S}\delta^{G/K}(\text{ker}(i^*))=\bigwedge\nolimits_\mathbb{Q}^{\text{rank }G-\text{dim }S}P$ because $\bigwedge\nolimits_\mathbb{Q}^{\text{rank }G-\text{dim }S}\delta^{G/K}(\text{ker}(i^*))$ is a nonzero subspace of $\bigwedge\nolimits^{\text{rank }G-\text{dim }S}_\mathbb{Q}P$ which is one-dimensional. Then we have $P=\delta^{G/K}(\text{ker}(i^*))$ and $\text{dim }\delta^{G/K}(\text{ker}(i^*))=\text{dim }P=\text{rank }G-\text{dim }S$. By Proposition \ref{rankSam}, $(G, S)$ is isotropy formal and this completes the proof.
	\end{proof}
\begin{example}\label{isoex}
	Let $G=SU(3)$, $\displaystyle S=\left.\left\{\begin{pmatrix}t& &\\ &t^{-1}& \\ &&1\end{pmatrix}\right|t\in U(1)\right\}$, $\sigma_{\text{std}}$ the standard representation of $G$. By abuse of notation, we also use $t$ to denote the standard 1-dimensional representation of $S$
	\begin{align*}
		S&\to U(1)\\
		\begin{pmatrix}t& &\\ &t^{-1}& \\ &&1\end{pmatrix}&\mapsto t.
	\end{align*}
	Then $\displaystyle R(G)=\mathbb{Z}[\sigma_{\text{std}}, \bigwedge\nolimits^2\sigma_{\text{std}}]$ and $R(S)=\mathbb{Z}[t, t^{-1}]$. The restriction map $i^*: R(G)\to R(S)$ takes both generators $\sigma_{\text{std}}$ and $\displaystyle\bigwedge\nolimits^2\sigma_{\text{std}}$ to $1+t+t^{-1}$. So $i^*R(G; \mathbb{Q})$ is isomorphic to $\mathbb{Q}[1+t+t^{-1}]$, which is regular at $I=i^*I(G; \mathbb{Q})=(t+t^{-1}-2)$. By Theorem \ref{rep}, $(G, S)$ is an isotropy formal pair. Alternatively, as mentioned before, both restrictions $i^*\sigma_{\text{std}}$ and $i^*\bigwedge\nolimits^2\sigma_{\text{std}}$ are $1+t+t^{-1}$, which is self-dual. By Theorem \ref{circle}, we can also get the isotropy formality of $(G, S)$.
	
	We can also use Theorem \ref{indexthry} to show the isotropy formality of $(G, S)$. Since $\text{ker}(i^*)=(\sigma_{\text{std}}-\bigwedge\nolimits^2\sigma_{\text{std}})$, $\mathcal{L}_S$ is the $R(S; \mathbb{Q})$-submodule of $K^*_S(G/S; \mathbb{Q})$ generated by $b:=\delta^{G/S}_S(\sigma_{\text{std}}(\bigwedge\nolimits^2\sigma_{\text{std}})^{-1})$. We take $a$ to be 1, represented by the equivariant trivial vector bundle over $G/S$. We will show that $\pi_*(ab)=\pi_*(b)=-1$, which is not in $I(S; \mathbb{Q})$. Let $T$ be the maximal torus $\displaystyle\left.\left\{\begin{pmatrix}t_1&&\\ &t_2&\\ && t_3\end{pmatrix}\right| t_i\in U(1), 1\leq i\leq 3, t_1t_2t_3=1\right\}$ of $SU(3)$, which contains $S$. Again by abuse of notation we use $t_i$ to denote the 1-dimensional representation which takes a diagonal matrix from $T$ to the $i$-th diagonal entry. Then fixed point set $(G/S)^S$ of the isotropy action is $T/S\cup gT/S$, where $\displaystyle g=\begin{pmatrix}&-1&\\ 1&&\\ &&1\end{pmatrix}$. The Atiyah-Segal localization formula says that 
	\[\pi_*(b)=\pi_{T/S*}\left(\frac{i_{T/S, G/S}^*(b)}{e_S(N_{T/S}^*)}\right)+\pi_{gT/S*}\left(\frac{i_{gT/S, G/S}^*(b)}{e_S(N^*_{gT/S})}\right).\]
	Here $i_{T/S, G/S}^*$ and $i_{gT/S, G/S}^*$ are restrictions from $K_S^*(G/S)$ to $K_S^*(T/S)$ and $K_S^*(gT/S)$ respectively, $N_{T/S}$ and $N_{gT/S}$ are normal bundles over $T/S$ and $gT/S$ respectively, $\pi_{T/S*}$ and $\pi_{gT/S*}$ are pushforward to $K_S^*(\text{pt})$. The short exact sequence of groups
	\[1{\longrightarrow}S\stackrel{i_{S, T}}{\longrightarrow} T\stackrel{\pi_{T, T/S}}{\longrightarrow}T/S\longrightarrow 1\]
	induces the short exact sequence of character groups
	\[0\longrightarrow \chi(T/S)\stackrel{\pi_{T, T/S}^*}{\longrightarrow} \chi(T)\stackrel{i_{S, T}^*}{\longrightarrow} \chi(S)\longrightarrow 0,\]
	where $i_{S, T}^*t_1=t$, $i_{S, T}^*t_2=t^{-1}$. The maps $i_{S, T}^*$ and $p^*$ extend by linearity to maps $i_{S, T}^*: R(T)\to R(S)$ and $\pi_{T, T/S}^*: R(T/S)\to R(T)$. We have that $R(T/S)=\mathbb{Z}[s, s^{-1}]$ with $\pi_{T, T/S}^*s=t_1t_2$. Consider the commutative diagram
	\[\xymatrix@+4pc{K_S^*(G/S)\ar[r]^{i_{T/S, G/S}^*}\ar[d]_{\pi^*_{G, G/S}}& K_S^*(T/S)\cong R(S)\otimes K^*(T/S)\ar[d]^{\pi^*_{T, T/S}}\\ K_S^*(G)\ar[r]^{i_{T, G}^*}& K_S^*(T)\cong R(S)\otimes K^*(T)}\]
	with maps induced by inclusions and projections. Here $S$ acts on $G$ and $T$ by conjugation. Then
	\begin{align*}
		i_{T}^*\circ\pi^*_{G/S}(b)&=i_T^*(\delta_S^G(\sigma_{\text{std}}-\bigwedge\nolimits^2\sigma_{\text{std}}))\\
							&=i_{S, T}^*t_1\otimes \delta(t_1)+i_{S, T}^*t_2\otimes\delta(t_2)+i_{S, T}^*t_3\otimes\delta(t_3)\\
							&-(i_{S, T}^*t_1t_2\otimes\delta(t_1+t_2)+i_{S, T}^*t_2t_3\otimes\delta(t_2+t_3)+i_{S, T}^*t_3t_1\otimes\delta(t_3+t_1))\\
							&(\text{cf. \cite[Lemma 4.19]{F}})\\
							&=t\otimes\delta(t_1)+t^{-1}\otimes\delta(t_2)+1\otimes\delta(-t_1-t_2)\\
							&-(1\otimes\delta(t_1+t_2)+t^{-1}\otimes\delta(-t_1)+t\otimes\delta(-t_2))\\
							&=(t+t^{-1}-2)\otimes\delta(t_1+t_2).
	\end{align*}
	By the commutativity of the above diagram and the injectivity of $\pi_{T/S}^*$, we have $i_{T/S}^*(b)=(t+t^{-1}-2)\otimes\delta(s)$. Similarly, $i_{gT/S}^*(b)=(t+t^{-1}-2)\otimes\delta(s)\in K_S^*(gT/S)\cong R(S)\otimes K^*(T/S)$. Next, the normal bundle $N_{T/S}$ is a trivial $T$-equivariant complex vector bundle over $T/S$ with weights of the $T$-action on the fibers being the positive roots of the Lie algebra of $G$, i.e. $t_1t_2^{-1}$, $t_2t_3^{-1}$ and $t_1t_3^{-1}$. When restricted to the $S$-action, the weights become $t^2$, $t^{-1}$ and $t$. Thus the $S$-equivariant $K$-theoretic Euler class $e_S(N_{T/S}^*)$ is $(1-t^{-2})(1-t)(1-t^{-1})$. Similarly, we find that $e_S(N_{gT/S}^*)$ is $(1-t^2)(1-t^{-1})(1-t)$. Assembling all these data to the Atiyah-Segal localization theorem yields
	\begin{align*}
		\pi_*(b)&=\pi_{T/S*}\left(\frac{(t+t^{-1}-2)\otimes\delta(s)}{(1-t^{-2})(1-t)(1-t^{-1})}\right)+\pi_{gT/S*}\left(\frac{(t+t^{-1}-2)\otimes\delta(s)}{(1-t^2)(1-t^{-1})(1-t)}\right)\\
				&=\frac{t+t^{-1}-2}{(1-t^{-2})(1-t)(1-t^{-1})}+\frac{t+t^{-1}-2}{(1-t^2)(1-t)(1-t^{-1})}\\
				&=-1
	\end{align*}
	which is not in $I(S; \mathbb{Q})$. By Theorem \ref{indexthry}, we again verify that $(G, S)$ is isotropy formal.
\end{example}

\begin{example}
	Let $G=SU(3)$ and $\displaystyle S=\left.\left\{\begin{pmatrix}t&&\\ &t&\\ &&t^{-2}\end{pmatrix}\right|t\in U(1)\right\}$. The isotropy formality of $(G, S)$ is discussed in \cite[Example 7.19]{CFok}. It is shown there that $i^*R(G; \mathbb{Q})$ is not regular at the ideal $i^*I(G; \mathbb{Q})$ and hence by Theorem \ref{rep}, $(G, S)$ is not isotropy formal. Alternatively, since $i^*\sigma_{\text{std}}=t+t+t^{-2}$ is not self-dual, we have again the non-isotropy formality by Theorem \ref{circle}.
	
	As in Example \ref{isoex}, we will also confirm the non-isotropy formality of $(G, S)$ by using Theorem \ref{indexthry}. Let $x=\sigma_{\text{std}}$ and $\displaystyle y=\bigwedge\nolimits^2\sigma_{\text{std}}$. Then $\text{ker }i^*=4x^3+4y^3+27-x^2y^2-18xy$. Note that $\text{dim }(4x^3+4y^3+27)=\text{dim }(x^2y^2+18xy)=243$. By viewing these two representations as unitary linear transformations on a 243-dimensional complex vector space, we can define $b:=\delta_S^{G/S}((4x^3+4y^3+27)(x^2y^2+18xy)^{-1})$, which generates the $R(S; \mathbb{Q})$-submodule $\mathcal{L}_S$. Following the last example, we also denote the maximal torus of diagonal matrices of $SU(3)$ by $T$. Then $(G/S)^S$ is $U/S$, where $\displaystyle U=\left.\left\{\begin{pmatrix} A&\\ &\det(A)^{-1}\end{pmatrix}\right|A\in U(2)\right\}$. Note that $U/S\cong PSU(2)$, which is homeomorphic to $\mathbb{RP}^3$. Let $a$ be an arbitrary element of $A$. The Atiyah-Segal localization formula for $\pi_*(ab)$ reads
	\[\pi_*(ab)=\pi_{U/S*}\left(\frac{i_{U/S, G/S}^*(ab)}{e_S(N_{U/S}^*)}\right).\]
	We will show that $i_{U/S, G/S}^*(b)=0$ and hence the index is $0\in I(S; \mathbb{Q})$. Consider the following commutative diagram.
	\[\xymatrix@+3pc{K_S^*(G/S)\ar[r]^{i_{U/S, G/S}^*}\ar[d]_{\pi_{G, G/S}^*}&K_S^*(U/S)\ar[r]^{i_{T/S, U/S}^*}\ar[d]_{\pi_{U, U/S}^*}& K_S^*(T/S)\ar[d]^{\pi_{T, T/S}^*}\\ K_S^*(G)\ar[r]^{i_{U, G}^*}&K_S^*(U)\ar[r]^{i_{T, U}^*}& K_S^*(T)\cong R(S)\otimes K^*(T)}\]
	Note that among the maps on the right square, $i_{T, U}^*$ and $\pi_{T, T/S}^*$ are injective while the kernel of $i_{T/S, U/S}^*$ and $\pi_{U, U/S}^*$ is generated by the 2-torsion element in the degree 0 piece represented by the reduced nontrivial complex line bundle over $\mathbb{RP}^3$. Note also that $a$ lives in the degree $-1$ piece of $K_S^*(G/S)$. Therefore to show that $i_{U/S, G/S}^*(b)=0$, it suffices to show that $i_{T, U}^*\circ i_{U, G}^*\circ \pi_{G, G/S}^*(a)=0$. We have
	\begin{align*}
		i_{T, U}^*\circ i_{U, G}^*\circ \pi_{G, G/S}^*(b)&=i_{T, U}^*\circ i_{U, G}^*\delta_S^G(4x^3+4y^3+27-x^2y^2-18xy)\\
		&=\delta_S^T(4(t_1+t_2+t_1^{-1}t_2^{-2})^3+4(t_1t_2+t_1^{-1}+t_2^{-1})^3+27\\
		&-(t_1+t_2+t_1^{-1}t_2^{-1})^2(t_1t_2+t_1^{-1}+t_2^{-1})^2-18(t_1+t_2+t_1t_2)(t_1t_2+t_1^{-1}t_2^{-1}))\\
		&(\text{Note that }i_{T, G}^*x=t_1+t_2+t_1^{-1}t_2^{-1}\text{ and }i_{T, G}^*y=t_1t_2+t_1^{-1}+t_2^{-1})\\
		&=\delta_S^T(-t_1^4t_2^2+2t_1^3t_2^3-t_1^2t_2^4+2t_1^3+2t_2^3-2t_1^2t_2-2t_1t_2^2-t_1^2t_2^{-2}-t_1^{-2}t_2^2\\
		&-2t_1t_2^{-1}-2t_1^{-1}t_2+6+2t_1^{-3}+2t_2^{-3}-2t_1^{-1}t_2^{-2}-2t_1^{-2}t_2^{-1}-t_1^{-2}t_2^{-4}-t_1^{-4}t_2^{-2}\\
		&+2t_1^{-3}t_2^{-3})
	\end{align*}
	Using $\delta_S^T(t_1^mt_2^n)=mt^{m+n}\otimes\delta(t_1)+nt^{m+n}\otimes\delta(t_2)\in R(S)\otimes K^*(T)$ (cf. \cite[Lemma 4.19]{F}), we have that the above expression is 0. Now the index $\pi_*(ab)$ is 0, which is in $I(S; \mathbb{Q})$. By Theorem \ref{indexthry}, we again obtain the conclusion that $(G, S)$ is not isotropy formal.
\end{example}

	\noindent\footnotesize{\textsc Xi'an Jiaotong-Liverpool University,\\
111 Ren’ai Road, Suzhou Industrial
Park,\\Suzhou, Jiangsu Province 215123, China}\\
\\
\textsc{E-mail}: \texttt{ChiKwong.Fok@xjtlu.edu.cn}\\
\end{document}